\theoremstyle{plain}
\newtheorem{thm}{Theorem}[section]
\newtheorem{thm*}{Theorem}[section]
\newtheorem{cor}[thm]{Corollary}
\newtheorem{prop}[thm]{Proposition}
\newtheorem{lemma}[thm]{Lemma}
\newtheorem{conj}[thm]{Conjecture}
\newtheorem{ques}[thm]{Question}
\theoremstyle{definition}
\newtheorem{defn}[thm]{Definition}
\newtheorem{remark}[thm]{Remark}
\newtheorem{ex}[thm]{Example}
\newcommand{\bG}{\mathbb G}
\newcommand{\Coind}{\rm Coind}
\newcommand{\bZ}{\mathbb Z}
\newcommand{\bP}{\mathbb P}
\newcommand{\bu}{{\bullet}}
\newcommand{\ol}{\overline}
\newcommand{\ul}{\underline}
\newcommand{\Gas}{\mathbb G_{a(s)}}
\newcommand{\Spec}{\text{Spec}\,}
\newcommand{\Aut}{\text{Aut}}
\newcommand{\Ext}{\text{Ext}}
\newcommand{\End}{\text{End}}
\newcommand{\Ker}{\text{Ker}\,}
\newcommand{\Hom}{\text{Hom}}
\newcommand{\Ind}{\text{Ind}}
\newcommand{\Proj}{\text{Proj}\,}
\newcommand{\Z}{\mathbb Z}
\newcommand{\bN}{\mathbb N}
\newcommand{\rk}{\text{Rk}\,}
\newcommand{\Id}{\text{Id}\,}
\newcommand{\N}{\mathcal N}
\newcommand{\cL}{\mathcal L}
\newcommand{\R}{\mathcal R}
\newcommand{\I}{\mathcal I}
\newcommand{\Tr}{\rm Tr \,}
\newcommand{\D}{\rm D}
\def\CE{{\mathcal{E}}}
\def\HHH{\operatorname{H}\nolimits}
\def\proj{\operatorname{(proj)}\nolimits}
\def\Rad{\operatorname{rad}\nolimits}
\def\soc{\operatorname{soc}\nolimits}
\def\JType{\operatorname{JType}\nolimits}
\def\stmod{\operatorname{stmod}\nolimits}
\def\CE{{\mathcal{E}}}
\def\HHH{\operatorname{H}\nolimits}
\def\proj{\operatorname{(proj)}\nolimits}
\def\Id{\text{Id}}
\def\Dim{\operatorname{dim}\nolimits}
\def\Hom{\operatorname{Hom}\nolimits}
\begin{document}

 \title{Modules of constant Jordan type}

\author[Jon F. Carlson, Eric M. Friedlander and 
Julia Pevtsova]{Jon F. Carlson$^*$, Eric M. Friedlander$^*$ and 
Julia Pevtsova$^*$}

\address{Department of Mathematics, University of Georgia, Athens, GA}
\email{jfc@math.uga.edu}

\address {Department of Mathematics, Northwestern University,
Evanston, IL}
\email{eric@math.northwestern.edu}

\address {Department of Mathematics, University of Washington, 
Seattle, WA}
\email{julia@math.washington.edu}

\thanks{$^*$ partially supported by the NSF }

\subjclass[2000]{16G10, 20C20, 20G10}

\keywords{}

\begin{abstract}
We introduce the class of modules of constant Jordan type 
for a finite group scheme $G$ over a field $k$ of characteristic $p > 0$.  
This class is closed under taking direct sums, tensor products, duals, 
Heller shifts and direct summands, and includes endotrivial modules.  
It contains all modules in an Auslander-Reiten component which has at 
least one module in the class.  Highly non-trivial examples are constructed 
using cohomological techniques.  We offer conjectures suggesting that there 
are strong conditions on a partition to be the Jordan type associated to a 
module of constant Jordan type.
\end{abstract}

\maketitle

\tableofcontents


\section{Introduction}
In \cite{FP1} and \cite{FP2}, the second and third
authors have introduced a seemingly naive approach to the study of
representations of finite groups and related structures on vector spaces
over a field $k$ of characteristic $p > 0$.  The basic 
idea is to restrict representations to certain subalgebras (``$\pi$-points") 
isomorphic to the
group algebra of $\bZ/p\bZ$, for we completely understand the representation
theory of the algebra $k\bZ/p\bZ$ in terms of partitions 
(or ``Jordan types").  
The simplicity of this approach enables the
authors to consider representation theory in a very general context
(of a finite group scheme $G$ over an arbitrary field of characteristic $p>0$)
and prove both global results about the stable module category and
explicit results for specific examples.  The naivety of the approach is
somewhat misleading for underlying many theorems are somewhat
difficult cohomological results, especially results giving finite generation 
and detection modulo nilpotence on subalgebras of special form.

In a recent paper \cite{FPS}, the second and third authors in 
collaboration with Andrei Suslin have adopted this naive point of view
to formulate and investigate new invariants for such representations.  
The authors introduce ``maximal" and ``generic" Jordan types for a 
given representation whose existence even in the very special case of 
the finite group $\bZ/p\bZ \times \bZ/p\bZ$ is highly non-trivial. 

Indeed, this special example $G = \bZ/p\bZ \times \bZ/p\bZ$ (for $p > 2$)
is challenging from a representation-theoretic point of view
for its group algebra has wild representation type.  With such ``wildness"
in mind, it is natural to investigate classes of representations of $G$
with special properties.  That is the purpose of this present paper, in
which we investigate modules of constant Jordan type.  Although the 
formulation of this concept requires the approach of ``$\pi$-points" 
and our study utilizes many of the techniques of the papers mentioned
above,  the  resulting class of modules appears to be a most natural 
one to study for those considering the modular representation theory
of finite groups, $p$-restricted Lie algebras, and other finite group schemes.
We remark here on two aspects of this class of modules of constant
Jordan type:  it includes the much-studied class of endotrivial modules
and also includes many other modules even in the special case of 
an elementary abelian $p$-group; the classification of such modules
of constant Jordan type appears to be very difficult, sufficiently difficult
that even in the special case $G = \bZ/p\bZ \times \bZ/p\bZ$ we 
can only speculate on what Jordan types are realized.

 In Definition \ref{def}, we introduce the concept of a $kG$-module
$M$  of constant Jordan type, a finite dimensional module with
the property that $\alpha_K^*(M_K)$ has Jordan type independent
of the  $\pi$-point $\xymatrix{\alpha_K: K[t]/t^p \ar[r] & KG}$ with
$K/k$ an arbitrary field extension.   As verified in Theorem \ref{endo},
a $kG$-module is an endotrivial module if and only if it has constant
Jordan type of a very special form.  For certain explicit finite 
group schemes,  
various examples of modules of constant Jordan type can be
constructed directly as we show in \S \ref{exam}.  Much of
our effort in the first half of this paper is dedicated to showing
that the class of modules of constant Jordan type is closed under
various  natural operations:  Heller shifts (Proposition \ref{heller}),
direct sums (Proposition \ref{heller}), direct summands
(Theorem \ref{summand}), linear duals (Proposition \ref{dual}),
and tensor products (Corollary \ref{tensor}).

        To establish these results, we continue the study initiated in
\cite{FPS} of the condition that a $\pi$-point $\alpha_K$ be maximal
for a given $kG$-module $M$.  
This analysis should be of interest for the study of general $kG$-modules. 
In \S 3, we formulate the natural
relationship of strict specialization of $\pi$-points (closely related to the
relationship of specialization of equivalence classes of $\pi$-points
considered in \cite{FP2}).  In \S 4, we investigate the surprisingly
subtle behavior of the condition of maximality of $\pi$-points
with respect to the tensor product of two given $kG$-modules.
The relevance of maximality of $\pi$-points for a $kG$-module $M$
is emphasized by Proposition \ref{empty} which asserts that the
 $kG$-module $M$ has constant Jordan type if and only if the
 non-maximal support variety of $M$, $\Gamma(G)_M$, is empty.

        In the second half of this paper, we give several methods
of constructing modules of constant Jordan type.   One is
provided in Proposition \ref{extend1} and another in Theorem
\ref{create1} (as well as Proposition \ref{create2}).  Much
preliminary effort is required for us to establish in Theorem
\ref{not-endo} that our second method provides examples which
can not be realized by the first.  Indeed, the example provided
by this theorem shows how subtle is the behavior of the class
of modules of constant Jordan type with respect to extensions.
A third method of construction using the Auslander-Reiten 
theory of almost
split sequences is detailed in \S \ref{AR}:  Theorem
\ref{component} establishes that
any module in the same Auslander-Reiten component as 
a module of constant Jordan type is also of constant Jordan type,
whereas Theorem \ref{stableone} constructs $kG$-modules
of constant Jordan type $n[1] + \proj$ provided that
$G$ satisfies a mild cohomological property.

As one indication of the combinatorics involved in the existence
of modules of constant Jordan type, we show in Theorem \ref{ranks}
that our techniques give a new proof of a special case of 
Macaulay's Generalized Principal Ideal Theorem.

We have made little progress in classifying those partitions which are
realizable as the Jordan type of modules of constant Jordan type.
For example, for a rank 2 elementary abelian $p$-group $E$ with $p > 3$, we
conjecture but can not prove  that no partition of type $n[p] + 1[2]$
is the Jordan type of $kE$-module of constant Jordan type.
In \S \ref{qc}, we mention numerous questions
and conjectures on the constraints of a Jordan type associated to
a module of constant Jordan type.

Throughout this paper, $k$ will denote an arbitrary
field of finite characteristic
and $p>0$ will denote the characteristic of $k$.  Without explicit
mention to the contrary, $kG$-modules
are assumed to be finitely generated. We let $M_n(k)$ denote 
the algebra of $n\times n$ matrices over $k$.

We thank Valery Alexeev, David Eisenbud, and Sasha Premet 
for help with Theorem~\ref{ranks}. We also thank Karin Erdmann 
for drawing our attention to possible connections
with the Auslander-Reiten theory.  We are particularly grateful to 
the referee for many constructive comments and suggestions.

\vspace{0.2in}


\section{Constant Jordan Type}
\label{ct}

In this first section, we introduce modules of constant Jordan type
and investigate some of the basic properties of this class of modules.

Recall that a finite group scheme $G$ (over $k$) is a 
group scheme over $k$ whose coordinate
algebra $k[G]$ is finite dimensional over 
$k$.  We denote the linear dual of $k[G]$ by
$kG$ and call this the group algebra of $G$.   
A (rational) $G$-module is a comodule 
for $k[G]$ or equivalently a $kG$-module.
If $K/k$ is a field extension, then we denote by
$G_K$ the base change of the $k$-group scheme
$G$ to the $K$-group scheme 
$G_K = G \times_{\Spec k} \Spec K$. We observe
that the group algebra $KG_K$ of $G_K$ equals $KG
= K\otimes_k kG$.

We remind the reader that the isomorphism 
class of a finite dimensional $k[t]/t^p$-module
$M$ of dimension $n$ (over $k$) is given by 
a partition of $n$ into blocks of size $\leq p$.
Equivalently, if we let 
$\xymatrix@-.5pc{\rho_M: k[t]/t^p \ar[r] & M_n(k)}$ 
be the representation associated to
$M$, then the isomorphism type of $M$ is specified 
by the conjugacy class of the
element $\rho_M(t) \in M_n(k)$ whose $p$-th power is $0$.   
We shall often denote the isomorphism
type of $M$ by $a_p[p] + \cdots + a_1[1]$, 
where $a_i$ denotes the number of blocks of
size $i$ in the partition of $n$ associated to $M$.

We call the isomorphism type of a finite dimensional
$k[t]/t^p$-module $M$ the {\it Jordan type} of $M$.
For any finite dimensional $k[t]/t^p$-module $M$,
the {\it stable Jordan type} of $M$ is the ``stable equivalence''
class of Jordan types, where two Jordan types
$a_p[p] + \cdots + a_1[1]$ and $b_p[p] + \cdots + b_1[1]$
are stably equivalent if $a_i = b_i,$ for all $i < p$.

We may view a Jordan type $a_p[p] + \cdots + a_1[1]$ as a 
{\it partition} of $n = \sum_{i=1}^p ia_i$.   If 
$\sum ia_i = \sum ib_i$, then we say that the Jordan type
 $\ul a = a_p[p] + \cdots + a_1[1]$ is greater or equal to
the Jordan type $\ul b = b_p[p] + \cdots +  b_1[1]$ 
(denoted $\ul a \geq \ul b$) provided that
\begin{equation}
\label{dom}
\sum_{i=j}^p ia_i \ \geq \ \sum_{i=j}^p ib_i, \quad 1\leq j \leq p.
\end{equation}
If $\ul a ~ \geq ~ \ul b$ and if $\sum_{i=j}^p ia_i > 
\sum_{i=j}^p ib_i$ for some $j$, then we write $\ul a ~ > ~ \ul b.$
Note that this is the usual dominance ordering on partitions.

\begin{remark}
Let $M, ~ N $ be  $k[t]/t^p$-modules of dimension $n$ given by 
$$\xymatrix@-.5pc{\rho_M, ~ \rho_N: k[t]/t^p \ar[r]  & M_n(k).}$$ 
Then the Jordan type $\ul a$ of $M$ is greater or equal to 
(respectively, greater than) the
Jordan type of $\ul b$ of $N$ if and only if for every $j, 1 \leq j < p$,
the rank of $\rho_M^j$ is greater or equal to the the rank of $\rho_N^j$
(resp., and strictly greater for some $j$).
\end{remark}

\begin{defn}
\label{pi}
A $\pi$-point for a finite group scheme $G$ 
is a left flat map of $K$-algebras 
$\xymatrix@-.5pc{\alpha_K: K[t]/t^p \ar[r] & KG}$, for some field 
extension $K/k$,
which factors through the group algebra 
$KC_K \subset KG_K$ of some unipotent abelian subgroup
scheme $C_K \subset G_K$.  If $M$ is a finite 
dimensional $kG$-module, the Jordan type of
the $\pi$-point $\alpha_K$ on $M$ is the 
isomorphism class of the $K[t]/t^p$-module
$\alpha_K^*(M_K)$ (where $M_K = K\otimes_k M$).
We emphasize here that $\alpha_K^*(M_K)$ denotes the restriction
of $M_K$ to a $K[t]/t^p$-module along the map $\alpha_K$.
We say that the Jordan type of $\alpha_K^*(M_K)$ 
is the Jordan type of $\alpha_K$ on $M$.
\end{defn}

\begin{defn}\label{specialize}
Let $\xymatrix@-.5pc{\alpha_K: K[t]/t^p \ar[r] & KG}$, 
$\xymatrix@-.5pc{\beta_L: L[t]/t^p \ar[r] & LG}$ 
be $\pi$-points of $G$.  Then
$\alpha_K$ is said to be a {\it specialization} 
of $\beta_L $ 
(written $\beta_L \downarrow \alpha_K$) 
if for every finite dimensional
$kG$-module $M$ the $K[t]/t^p$-module $
\alpha_K^*(M_K)$ is projective whenever
the $L[t]/t^p$-module
$\beta_L^*(M_L)$ is projective.   We say 
that $\alpha_K, \beta_L$ are {\it equivalent}
and write $\alpha_K \sim \beta_L$ provided that
$\alpha_K \downarrow \beta_L$ and
$\beta_L \downarrow \alpha_K$.
\end{defn}

The following theorem summarizes the close 
relationship between the set of equivalence
classes of $\pi$-points of $G$ and the 
cohomology $\HHH^\bu(G,k)$.  Here, $\HHH^\bu(G,k)
= \HHH^*(G,k)$, the cohomology algebra of $G$ 
provided that $p=2$; for $p > 2$, $\HHH^\bu(G,k)
\subset \HHH^*(G,k)$ denotes the commutative 
subalgebra of even dimensional classes.

\begin{thm} (\cite[3.6]{FP2})
\label{iso}
The set of equivalence classes of $\pi$-points 
of a finite group scheme $G$, written
$\Pi(G)$, admits a scheme structure determined 
by the stable module category, $stmod(kG)$.
With this structure, $\Pi(G)$ is isomorphic 
to the scheme $Proj \HHH^\bu(G,k)$.

In particular, the closed subsets of $\Pi(G)$ 
are of the form $\Pi(G)_M$ where $M$
is a finite dimensional $kG$-module and $\Pi(G)_M$ 
is the subset of those equivalence
classes of $\pi$-points 
$\xymatrix@-.5pc{\alpha_K: K[t]/t^p \ar[r] & KG}$ 
such that $\alpha_K^*(M_K)$ is not
projective.
\end{thm}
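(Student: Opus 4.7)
The plan is to construct an explicit bijection between equivalence classes of $\pi$-points and homogeneous primes of $\HHH^\bu(G,k)$, transport the topology and structure sheaf through this bijection, and verify that closed subsets are exactly the non-projectivity loci $\Pi(G)_M$. To each $\pi$-point $\alpha_K : K[t]/t^p \to KG$ I would first associate the homogeneous ideal
\begin{equation*}
\mathfrak{p}(\alpha_K) \ = \ \sqrt{\ker\bigl(\HHH^\bu(G,k) \to \HHH^\bu(G_K,K) \xrightarrow{\alpha_K^*} \HHH^\bu(K[t]/t^p, K)\bigr)}.
\end{equation*}
Modulo nilpotents, $\HHH^\bu(K[t]/t^p, K)$ is a polynomial ring in one variable, so $\mathfrak{p}(\alpha_K)$ is a homogeneous prime of $\HHH^\bu(G,k)$.

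For the bijection, I would use Carlson's $L_\zeta$ modules to relate projectivity to vanishing of cohomology: for homogeneous $\zeta$, the module $L_\zeta$ becomes projective along $\alpha_K^*$ exactly when $\alpha_K^*(\zeta) \neq 0$. Hence the equivalence class of $\alpha_K$ determines which $\zeta$ lie in $\mathfrak{p}(\alpha_K)$ and conversely, giving a well defined injection $\Psi : \Pi(G) \to \Proj \HHH^\bu(G,k)$. For surjectivity, given $\mathfrak{p} \in \Proj \HHH^\bu(G,k)$, I would pass to the residue field $K = k(\mathfrak{p})$ and invoke Quillen stratification together with the Suslin--Friedlander--Bendel theory of $1$-parameter subgroups to produce an explicit $\pi$-point realizing $\mathfrak{p}$; for elementary abelian $p$-groups or height-one infinitesimal group schemes this amounts to writing down a generic shifted cyclic subgroup or $1$-parameter subgroup, and the general case reduces to these strata.

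To transfer the scheme structure, I would first identify $\Psi(\Pi(G)_M)$ with the cohomological support variety $V_G(M)$, then use that every homogeneous ideal of $\HHH^\bu(G,k)$ is generated up to radical by finitely many classes $\zeta_1,\ldots,\zeta_n$, so that $L_{\zeta_1} \otimes \cdots \otimes L_{\zeta_n}$ has support exactly the given closed subvariety; this shows the closed subsets of $\Pi(G)$ are precisely the sets $\Pi(G)_M$ and makes $\Psi$ a homeomorphism, with local rings transported through $\Psi$ by the cohomological definition of $\mathfrak{p}(\alpha_K)$. The main obstacle will be the surjectivity of $\Psi$: producing, for each homogeneous cohomology prime, a concrete $\pi$-point realizing it requires the full force of Quillen stratification together with explicit constructions of generic $\pi$-points over the residue fields of $\HHH^\bu(G,k)$, and constitutes the technical heart of \cite{FP2}.
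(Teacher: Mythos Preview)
The paper does not prove this theorem: it is quoted from \cite[3.6]{FP2} and stated without proof, serving only as background for the rest of the paper. Your sketch is a reasonable outline of the argument actually given in \cite{FP2}---associating to each $\pi$-point the radical of the kernel of the induced map on cohomology, using Carlson modules $L_\zeta$ to show this assignment is well defined and injective on equivalence classes, and invoking the stratification results and 1-parameter subgroup constructions for surjectivity---so there is nothing to correct, but also nothing to compare against in the present paper.
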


We now introduce modules of constant Jordan type, whose study is the
primary object of interest in this paper.

\begin{defn}\label{def}
The finite dimensional $kG$-module $M$ is said 
to be of constant Jordan type if the
Jordan type of $\alpha_K^*(M_K)$ is independent 
of the choice of $\pi$-point 
$\xymatrix@-.5pc{\alpha_K: K[t]/t^p \ar[r] & KG}$.
\end{defn}

\begin{remark}
\label{indep}
The Jordan type of $\alpha_K^*(M_K)$ for a finite dimensional
$kG$-module $M$ at a $\pi$-point $\xymatrix@-.5pc{
\alpha_K: K[t]/t^p \ar[r] & KG}$
typically depends not only upon the equivalence class $[\alpha_K]
\in \Pi(G)$ but also upon the representative of this equivalence
class.  However, there are some exceptions. 
The central conclusion of \cite{FPS} is that 
in either of the following two situations,
the Jordan type of $\alpha_K^*(M_K)$ does not change 
if we replace $\alpha_K$ by some $\beta_L$ with $\alpha_K \sim \beta_L$:
\begin{enumerate}
\item 
 If $[\alpha_K] \in \Pi(G)$ is a generic point; otherwise said,
if $\alpha_K$ is a generic $\pi$-point.
\item
If for the given finite dimensional $kG$-module $M$, there does 
not exist any $\pi$-point $\beta_L$ such that the Jordan type
of $\beta_L^*(M_L)$ is strictly greater than the Jordan type of
$\alpha_K^*(M_K)$. In this situation we say that  $\alpha_K$ has maximal
Jordan type on $M$.
\end{enumerate}

We recall \cite[5.1]{FPS} that the non-maximal support variety, 
$\Gamma(G)_M \subset \Pi(G)$ 
associated to a finite dimensional $kG$-module $M$ is
defined to be the (closed) subspace of those points
$x\in \Pi(G)$ with the property that for some (and thus any) representative
$\alpha_K$ of $x$ the Jordan type of $\alpha_K^*(M_K)$ is 
not maximal for $M$, or equivalently, $\alpha_K$ does not have 
maximal Jordan type on $M$.
\end{remark}

Remark \ref{indep}(2) immediately leads us to the following equivalent
formulation of the property of constant Jordan type.

\begin{prop}
\label{rep}
A finite dimensional $kG$-module is of constant Jordan type 
$a_p[p] + \cdots + a_1[1]$ if and only if for 
each equivalence class $[\alpha_K] \in \Pi(G)$ 
there exists some representative
$\xymatrix@-.5pc{\alpha_K: K[t]/t^p \ar[r] & KG}$ 
with the property $\alpha_K^*(M_K)$
has type $a_p[p] + \cdots + a_1[1]$.
\end{prop}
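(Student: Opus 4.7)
The forward implication is immediate from Definition \ref{def}: if $M$ has constant Jordan type $a_p[p] + \cdots + a_1[1]$, then every $\pi$-point has this Jordan type, so in particular every equivalence class $[\alpha_K] \in \Pi(G)$ possesses such a representative.

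For the converse, assume the hypothesis. The plan is to show that $a_p[p]+\cdots+a_1[1]$ is actually the maximal Jordan type of $M$ in the dominance ordering \eqref{dom}, and then exploit the invariance of maximal Jordan type under equivalence provided by Remark \ref{indep}(2). First, since any Jordan type occurring as $\alpha_K^*(M_K)$ is a partition of $\dim_k M$, there are only finitely many possibilities, and hence there exists a $\pi$-point $\beta_L$ whose Jordan type on $M$ is maximal in the sense of Remark \ref{indep}(2). By hypothesis, the equivalence class $[\beta_L]$ admits some representative $\alpha_K \sim \beta_L$ with $\alpha_K^*(M_K)$ of type $a_p[p]+\cdots+a_1[1]$. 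Since $\beta_L$ has maximal Jordan type on $M$, Remark \ref{indep}(2) tells us that $\alpha_K^*(M_K)$ and $\beta_L^*(M_L)$ have the same Jordan type. Therefore $a_p[p]+\cdots+a_1[1]$ is precisely the maximal Jordan type of $M$.

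Now let $\gamma_L$ be an arbitrary $\pi$-point. By hypothesis, there exists $\alpha_{K'} \sim \gamma_L$ whose Jordan type on $M$ is $a_p[p]+\cdots+a_1[1]$; by the preceding paragraph this is the maximal Jordan type, so $\alpha_{K'}$ has maximal Jordan type on $M$. Remark \ref{indep}(2) then applies to $\alpha_{K'}$, and since $\gamma_L \sim \alpha_{K'}$ we conclude that $\gamma_L^*(M_L)$ has the same Jordan type $a_p[p]+\cdots+a_1[1]$. As $\gamma_L$ was arbitrary, $M$ is of constant Jordan type $a_p[p]+\cdots+a_1[1]$.

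The only mildly delicate point is the realization that Remark \ref{indep}(2) cannot be invoked directly on an arbitrary equivalence class (one needs to \emph{know} the chosen representative is maximal), so the argument must be bootstrapped by first identifying the prescribed type with the globally maximal Jordan type. Once that identification is made, everything reduces to two routine applications of the maximality invariance recalled from \cite{FPS}.
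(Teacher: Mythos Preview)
Your proof is correct and follows the same approach the paper intends: the paper gives no explicit proof but simply states that the proposition follows immediately from Remark~\ref{indep}(2), and your argument spells out precisely how that implication goes. Your bootstrap step---first identifying the prescribed type with a maximal Jordan type, then invoking the equivalence-invariance of maximal type---is exactly the content hidden in the word ``immediately.''
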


Since any $\pi$-point $\xymatrix@-.5pc{
\alpha_K: K[t]/t^p \ar[r] & KG}$ has the property
that $\alpha_K^*$ commutes with direct sums and (modulo
projectives) Heller shifts, we conclude the following.

\begin{prop}
\label{heller}
Let $G$ be an arbitrary finite group scheme.
\begin{itemize}
\item The trivial $kG$-module $k$ has constant Jordan type.
\item  A finite dimensional projective 
$kG$-module has constant Jordan type.  
If $kG$ is not
semi-simple, then the Jordan type of a 
$kG$-projective module $P$ is equal to $\frac{\Dim_kP}{p}[p]$.
\item If $\Omega^i(k)$ denotes the $i$-th 
Heller shift of $k$ for some $i \in \bZ$, then 
$\Omega^i(k)$ has constant Jordan type 
equal to $n[p] + 1[1]$ for some $n \geq 0$ if $i$ is
even and equal to $m[p] + 1[p-1]$ for some $m \geq 0$ if $i$ is odd.
\item
If $M$ has constant Jordan type, 
then $\Omega^i(M)$ also has constant
Jordan type for any $i \in \bZ$ 
\item If $M, \ M^\prime$ are $kG$-modules of constant Jordan type,
then $M \oplus M^\prime$ also has constant Jordan type.
\end{itemize}
\end{prop}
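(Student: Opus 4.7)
The plan is to deduce all five bullets from a single workhorse observation: for any $\pi$-point $\alpha_K : K[t]/t^p \to KG$, the map $\alpha_K$ is flat and $K[t]/t^p$ is a local Artinian ring, so $KG$ is in fact free as a right $K[t]/t^p$-module via $\alpha_K$. Consequently, the restriction functor $\alpha_K^*$ is exact and carries projective $KG$-modules to projective $K[t]/t^p$-modules. Applying $\alpha_K^*$ to a projective presentation $0 \to \Omega(M)_K \to P_K \to M_K \to 0$ with $P$ projective then yields the identification
$$\alpha_K^*\bigl(\Omega^i(M)_K\bigr) \;\cong\; \Omega^i\bigl(\alpha_K^*(M_K)\bigr) \quad \text{in } \stmod K[t]/t^p$$
for every $i\in\bZ$ (using $\Omega^{-1}$ analogously in the negative direction).

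First I would dispense with the easy bullets. For the trivial module, $\alpha_K^*(k_K)$ is the one-dimensional trivial $K[t]/t^p$-module, which has Jordan type $[1]$ independent of $\alpha_K$. For a finite dimensional projective $P$, $P_K$ is $KG$-projective, so $\alpha_K^*(P_K)$ is a projective $K[t]/t^p$-module; the unique indecomposable projective over $K[t]/t^p$ is $K[t]/t^p$ itself, of dimension $p$, whence $\alpha_K^*(P_K) \cong (\Dim_k P/p)[p]$. (The hypothesis that $kG$ is not semisimple only ensures $\pi$-points exist in the first place, via nontrivial unipotent abelian subgroup schemes, so that the claim is not vacuous.) Bullet (5) is immediate because restriction commutes with direct sums and the Jordan type of a direct sum is the sum of the Jordan types.

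Bullets (3) and (4) rest on the structure of $\stmod K[t]/t^p$: the indecomposables are $[1],[2],\ldots,[p]$ with $[p]$ the sole projective, and from the short exact sequence $0 \to [p-i] \to [p] \to [i] \to 0$ one reads $\Omega([i]) \cong [p-i]$ stably for $1\le i\le p-1$. Therefore, if $M$ has constant Jordan type $a_p[p]+a_{p-1}[p-1]+\cdots+a_1[1]$, the boxed identification gives that $\alpha_K^*(\Omega(M)_K)$ has stable Jordan type $a_1[p-1]+a_2[p-2]+\cdots+a_{p-1}[1]$, independent of $\alpha_K$. The number of $[p]$-summands is then uniquely determined by the total dimension $\Dim_k\Omega(M)$, so $\Omega(M)$ itself has constant Jordan type; iterating (and symmetrically for $\Omega^{-1}$) proves (4). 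Specializing to $M = k$ and using $\Omega^2([1]) \cong [1]$ stably yields bullet (3), with the integer $n$ (resp.\ $m$) for even (resp.\ odd) $i$ determined by $\Dim_k\Omega^i(k)$.

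The main, and relatively minor, obstacle is justifying that $\alpha_K^*$ preserves projectives and that one may work consistently modulo projectives when iterating Heller shifts; this is precisely where the flatness built into the definition of a $\pi$-point is essential. Once these are in place, the remainder is bookkeeping with Jordan blocks.
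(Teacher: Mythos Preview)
Your proposal is correct and follows essentially the same approach as the paper. The paper's proof is in fact the single sentence preceding the proposition: ``Since any $\pi$-point $\alpha_K$ has the property that $\alpha_K^*$ commutes with direct sums and (modulo projectives) Heller shifts, we conclude the following.'' You have simply unpacked that sentence---using flatness to get freeness of $KG$ over $K[t]/t^p$, hence preservation of projectives under $\alpha_K^*$, hence the stable identity $\alpha_K^*(\Omega^i(M)_K)\cong\Omega^i(\alpha_K^*(M_K))$---and then carried out the explicit bookkeeping with Jordan blocks that the paper leaves to the reader.
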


The preceding proposition will be supplemented in subsequent sections
by propositions asserting that other familiar operations on modules of 
constant Jordan type yield modules of constant
Jordan type:  taking a direct summand (by Theorem \ref{summand}), 
taking the tensor product (by Corollary \ref{tensor}), and taking 
$\Hom_k(-,-)$ (by \ref{const}).

We make explicit the following elementary functoriality property.

\begin{prop}
If $\xymatrix@-.5pc{f: H \ar[r] & G}$ is a flat 
map of finite group schemes and if $M$ is a 
$kG$-module of constant Jordan type, then $f^*(M)$ is a $kH$-module
of the same constant Jordan type.
\end{prop}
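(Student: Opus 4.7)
The plan is to reduce the statement to a formal manipulation of $\pi$-points. Given any $\pi$-point $\beta_L \colon L[t]/t^p \to LH$ of $H$, I would form the composition
$$\tilde\beta_L \colon L[t]/t^p \xrightarrow{\ \beta_L\ } LH \xrightarrow{(f_L)_\sharp} LG,$$
where $(f_L)_\sharp$ denotes the map on group algebras induced by the base change $f_L \colon H_L \to G_L$. First I would verify that $\tilde\beta_L$ is itself a $\pi$-point of $G$; then I would invoke constancy of Jordan type for $M$ at $\tilde\beta_L$ and translate this back into a statement about $f^*(M)$ at $\beta_L$.

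To check that $\tilde\beta_L$ satisfies Definition \ref{pi}, two things are needed. For left flatness, since $f$ is flat, the induced map $kH \to kG$ makes $kG$ flat (in fact free) as a left $kH$-module; this is preserved by base change to $L$, and composed with the left flatness of $\beta_L$ yields left flatness of $\tilde\beta_L$. For the unipotent abelian factorization, $\beta_L$ factors through $LC_L$ for some unipotent abelian subgroup scheme $C_L \subset H_L$. The scheme-theoretic image $C'_L := f_L(C_L) \subset G_L$ is then a closed subgroup scheme of $G_L$, and as a quotient of the unipotent abelian $C_L$ it is again unipotent and abelian; clearly $\tilde\beta_L$ factors through $LC'_L \subset LG$.

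Granting that $\tilde\beta_L$ is a $\pi$-point of $G$, constancy of Jordan type of $M$ implies that $\tilde\beta_L^*(M_L)$ has a fixed Jordan type $\ull$, independent of $\beta_L$. Restriction of scalars along a composite is the composite of restrictions, so as $L[t]/t^p$-modules
$$\tilde\beta_L^*(M_L) \;\cong\; \beta_L^*\bigl((f^*M)_L\bigr).$$
Since $\beta_L$ was an arbitrary $\pi$-point of $H$, this says $f^*(M)$ has constant Jordan type $\ull$.

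The only step with genuine content, as opposed to a pure unraveling of definitions, is the verification that $\tilde\beta_L$ is a $\pi$-point of $G$. Its two ingredients are both standard: that a flat morphism of finite group schemes induces a free (hence left flat) map of group algebras, and that the scheme-theoretic image of a morphism of affine group schemes is a closed subgroup scheme whose formation respects being unipotent and abelian. I expect these will pose no difficulty, so the proposition should follow by a short, direct argument.
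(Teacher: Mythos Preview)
Your proposal is correct and follows essentially the same approach as the paper: compose a $\pi$-point of $H$ with the map on group algebras induced by $f$ to obtain a $\pi$-point of $G$, then use the tautological identification $\beta_L^*((f^*M)_L) \cong (f \circ \beta_L)^*(M_L)$. The paper's proof is more terse and simply asserts that $f \circ \alpha_K$ is a $\pi$-point, whereas you spell out the flatness and unipotent-abelian-factorization checks explicitly; but the underlying argument is identical.
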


\begin{proof}
If $\xymatrix@-.5pc{\alpha_K: K[t]/t^p \ar[r] & KH}$ 
is a $\pi$-point of $H$, then 
$f \circ \alpha_K$ is a $\pi$-point of $G$ and the Jordan type 
of $\alpha_K^*((f^*M))_K)$ equals that
of $(f \circ \alpha_K)^*(M_K)$.
\end{proof}

\begin{remark}
\label{closed}
To verify whether or not a given finite 
dimensional $kG$-module $M$ has constant
Jordan type it suffices to check that the 
Jordan type of $\alpha_K^*(M_K)$ does not
vary as $[\alpha_K] \in \Pi(G)$ ranges 
over closed points of $\Pi(G)$.  Thus, it
suffices to consider $\alpha_K$ with $K/k$ 
finite.  In particular, if $k$ is algebraically
closed, it suffices to consider $k$-rational 
points of $\Pi(G)$.
The collection of these points is denoted
by $P(G)$, and was investigated  extensively
in \cite{FP1}.

Because a $kG$-module $M$ has constant Jordan 
type if and only if its base change
$M_K$ has constant Jordan type as a $KG$-module 
for any field extension $K/k$,
one could replace $k$ by its algebraic closure and consider
only $k$-rational points for the algebraically closed field $k$. 
\end{remark}

\vskip .2in


\section{Examples of modules of constant Jordan type}
\label{exam}

We give examples of modules of constant Jordan type in special
situations.  Perhaps it is worth remarking that these examples are quite
different from endotrivial modules considered in \S \ref{end}.

\begin{prop}
\label{aug}
Let $E$ be an elementary abelian $p$-group and let $I \subset kE$ be the
augmentation ideal of the group algebra $kE$.  Then, for any $n \geq m$,
$I^m/I^n$ is a $kE$-module of constant Jordan type.
\end{prop}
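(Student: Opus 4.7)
My plan is to invoke Proposition \ref{rep} and exhibit, for each equivalence class $[\alpha_K] \in \Pi(E)$, a single representative whose Jordan type on $(I^m/I^n)_K$ does not depend on the class.

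The first step is to reduce to ``linear'' $\pi$-points. Fix generators $g_1,\ldots,g_r$ of $E$, let $x_i = g_i - 1$, and use the standard identification $kE \cong k[x_1,\ldots,x_r]/(x_1^p,\ldots,x_r^p)$; under this identification the augmentation ideal is $I = (x_1,\ldots,x_r)$. The description of $\Pi(E)$ (Theorem \ref{iso} combined with the identification $\Pi(E) \cong \bP^{r-1}$ for elementary abelian $E$ via shifted subgroups) shows that every equivalence class of $\pi$-point contains a representative of the form $\alpha_K \colon K[t]/t^p \to KE$, $t \mapsto \sum_i \lambda_i x_i$, with $(\lambda_1,\ldots,\lambda_r) \in K^r \setminus \{0\}$. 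By Proposition \ref{rep}, it suffices to verify that all such linear $\alpha_K$ produce the same Jordan type on $(I^m/I^n)_K$.

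The heart of the argument is then an algebra automorphism. Given $\alpha = \sum \lambda_i x_i$, complete $\alpha$ to a $K$-basis $y_1 = \alpha, y_2, \ldots, y_r$ of $\operatorname{Span}_K\{x_1,\ldots,x_r\}$. The $y_j$ pairwise commute and satisfy $y_j^p = 0$ (by the freshman's dream in characteristic $p$), so there is a well-defined $K$-algebra homomorphism $\phi \colon KE \to KE$ with $\phi(x_j) = y_j$. Since the $y_j$ are linearly independent modulo $I_K^2$, Nakayama's lemma gives that they generate $I_K$, so $\phi$ is surjective and therefore an automorphism of the finite-dimensional local algebra $KE$. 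Any such $\phi$ preserves the unique maximal ideal $I_K$, hence preserves $I_K^m$ and $I_K^n$, and descends to a $K$-linear automorphism $\bar\phi$ of $(I^m/I^n)_K$ satisfying $\bar\phi(x_1 v) = \phi(x_1)\bar\phi(v) = \alpha \bar\phi(v)$. Thus multiplication by $\alpha$ on $(I^m/I^n)_K$ is conjugate to multiplication by $x_1$, and the two have the same Jordan type; this type is independent of $\lambda$, which is the desired constant Jordan type.

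The main obstacle I anticipate is the first step, namely the reduction to $\pi$-points of linear form, which rests on the nontrivial identification of $\Pi(E)$ with $\bP^{r-1}$ via shifted subgroups. Once that reduction is granted, the remainder of the argument runs on a single intrinsic fact: the augmentation ideal $I$ is the unique maximal ideal of the local algebra $kE$, so the filtration $I^{\bullet}$, and hence the subquotient $I^m/I^n$, is preserved by every $K$-algebra automorphism of $KE$.
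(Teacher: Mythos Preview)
Your argument is correct and shares the paper's core idea: build a $K$-algebra automorphism of $KE$ carrying $x_1$ to $\alpha_K(t)$, observe that any such automorphism preserves the powers of the augmentation ideal, and conclude that the Jordan type is independent of $\alpha_K$.

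The one place where the paper is more efficient is precisely the step you flag as the main obstacle. You first reduce to \emph{linear} representatives via Proposition~\ref{rep} and the identification $\Pi(E)\cong\bP^{r-1}$, and then build the automorphism only for linear $\alpha$. The paper skips this reduction entirely: it uses the fact (from \cite[2.2,2.6]{FP1}) that for \emph{any} $\pi$-point $\alpha_K$, flatness forces $\alpha_K(t)$ to lie in $I_K$ with nonzero image in $I_K/I_K^2$. That alone is enough to construct the automorphism $\theta_\alpha$ sending $x_1\mapsto\alpha_K(t)$ (and fixing, or suitably adjusting, the other generators), so the argument applies directly to every $\pi$-point without passing through equivalence classes or invoking the structure of $\Pi(E)$. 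In short, the ``nontrivial identification'' you worry about is not needed.
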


\begin{proof}
Let $\xymatrix@-.5pc{\beta: k[t]/t^p \ar[r] & kE} 
\simeq k[t_1,\ldots,t_r]/(t_1^p,\ldots,t_r^p)$ be the
$\pi$-point defined by $\beta(t) = t_1$ and let 
$\xymatrix@-.5pc{\alpha_K: K[t]/t^p \ar[r] & KE}$ be an arbitrary $\pi$-point.
Since $\alpha_K$ is flat and $\alpha_K(t)$ has $p$-th power $0$,  
$\alpha_K(t)$ is a polynomial in $t_1,\ldots,t_r$ with
constant term 0 and non-vanishing linear term (\cite[2.2,2.6]{FP1}).
Consequently, we may choose an
automorphism $\xymatrix@-.5pc{\theta_\alpha: KE \ar[r] & KE}$ 
which sends $t_1$ to $\alpha_K(t)$, so that 
$\alpha_K = \theta_\alpha \circ \beta$. 
The automorphism $\theta_{\alpha}$ necessarily sends any power $I^m$ of the 
augmentation ideal isomorphically onto itself, so that 
$$\theta_\alpha^{-1}: \alpha_K^*(I^m/I^n) ~ = ~ (\theta_{\alpha} 
\circ \beta)^*((I^m/I^n)_K) ~ \simeq ~
(\beta^*(I^m/I^n))_K.$$
In other words, the Jordan type of $\alpha_K^*(I^m/I^n)$ does not depend upon
the choice of $\pi$-point $\alpha_K$.
\end{proof}

\begin{ex}
\label{elemex}
As an elementary, specific case of Proposition \ref{aug},  
we consider $kE/I^2$ where $E$ is an elementary abelian $p$-group of rank $r$.
This is a $kE$-module of dimension $r+1$ and can be represented explicitly as
follows.  Give the $kE$-module structure on $k^{r+1}$ by 
defining the generators
$\{ t_1,t_2,\ldots,t_r \}$ to act  by multiplication by 
$\{ e_{1,2}, e_{1,3}, \ldots,e_{1,r+1} \}$, 
pairwise commuting elementary matrices of size 
$(r+1) \times (r+1)$ with $p$-th power $0$.  The
constant Jordan type of the module $kE/I^2$ is $1[2] + (r-1)[1]$.
\end{ex}

\begin{remark}
By Proposition \ref{dual}, the dual $(kE/I^2)^\#$ of $kE/I^2$ 
is also a module of
constant Jordan type of the same Jordan type as $kE/I^2$.  In the special case
of $r = 2$, $(kE/I^2, (kE/I^2)^\#)$ constitute the example 
produced years ago by Jens Jantzen of two
non-isomorphic modules with the same ``local Jordan type."
\end{remark}

\begin{ex}
We give a somewhat more interesting example to show how the 
residue characteristic
$p$ plays a role.  Consider the elementary abelian $p$-group $E$ of
rank 2, so that $kE = k[x,y]/(x^p,y^p)$.  Define the $kE$-module $W$ of dimension 13 
generated by $v_1,v_2, v_3,v_4$ and spanned as a $k$-vector space by
$$\{  v_1,v_2,v_3,v_4,x(v_1),x(v_2),x(v_3),x(v_4),x^2(v_1),x^2(v_2),x^2(v_3),
y(v_1),yx(v_1) \}$$ with 
$$x(v_i) = y(v_{i+1}),~ y^2(v_1) = x^2(v_4) = x^3(v_i) = 0.$$

We represent this module with the following diagram
\begin{equation}
\label{dia5}
\begin{xy}*!C\xybox{%
\xymatrix{ &\stackrel{\langle v_1 \rangle }{\bu} \ar[dl]|y \ar[dr]|x 
&&\stackrel{\langle v_2 \rangle }{\bu} \ar[dl]|y \ar[dr]|x
&&\stackrel{\langle v_3 \rangle }{\bu} \ar[dl]|y\ar[dr]|x 
&&\stackrel{\langle v_4 \rangle }{\bu} \ar[dl]|y \ar[dr]|x\\
\bu \ar[dr]|x && \bu \ar[dl]|y \ar[dr]|x && \bu 
\ar[dl]|y \ar[dr]|x && \bu \ar[dl]|y \ar[dr]|x && \bu \ar[dl]|y \\
& \bu && \bu && \bu && \bu}}
\end{xy}
\end{equation}
The vertices correspond to  $k$-linear space generators, 
and the arrows indicate the action of the
generators $x$ and $y$ of the group algebra.  
A vertex with no out-coming arrows  corresponds to a 
trivial 1-dimensional submodule.

	If $p = 5$, then $W ~\simeq ~ I^3/I^6$, and
$W$ has constant Jordan type $3[3] + 2[2]$.   More generally, the Jordan type
of the $kE$-module $I^{p-2}/I^{p+1}$ is $(p-2)[3] + 2[2]$.  

If $p > 5$, then $W$ does not have constant Jordan type.   Namely, the
Jordan type for both $x$ and $y$ on $W$ is $3[3]+2[2]$, 
whereas the Jordan type of $x+y$ is $4[3]+1[1]$.
The Jordan blocks of size 3 for the action of $x+y$  are generated by 
$v_1, v_2, v_3$ and $v_4$ and the trivial block is  generated by
$x(v_1-v_2+v_3-v_4) + y(v_4)$.
\end{ex}

\begin{prop}
Let $sl_2$ denote the $p$-restricted Lie algebra of 
$2 \times 2$ matrices of trace 0
and let $u(sl_2)$ denote the restricted 
enveloping algebra of $sl_2$, the group 
algebra of the height 1 infinitesimal group scheme $G = SL_{2(1)}$.  
If the finite dimensional $u(sl_2)$-module $M$ 
is the restriction of a rational 
$SL_2$-module, then $M$ has constant Jordan type.  On the other hand, 
other $u(sl_2)$-modules typically do not have constant Jordan type.
\end{prop}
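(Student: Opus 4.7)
The plan is to handle the two halves of the proposition separately, with the positive statement proved by combining an identification of $\pi$-points for $G = SL_{2(1)}$ with the nullcone of $sl_2$ and the classical transitivity of the $SL_2$-action on the set of nonzero nilpotents. As a first step I would recall that, because $u(sl_2)$ is the restricted enveloping algebra of the height-one restricted Lie algebra $sl_2$, every equivalence class of $\pi$-points of $G$ contains a representative of the form $\alpha_K \colon K[t]/t^p \to u(sl_2)_K$, $t \mapsto x$, with $x \in sl_2 \otimes_k K$ a nonzero $p$-nilpotent element (cf.\ \cite[2.2,2.6]{FP1}); for such $\alpha_K$ the Jordan type of $\alpha_K^*(M_K)$ is the Jordan type of the operator $\rho_M(x)$ acting on $M_K$. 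In the standard basis $\{e,f,h\}$ of $sl_2$, the nullcone of $p$-nilpotent elements is the affine quadric cone $\{ae+bf+ch : ab + c^2 = 0\}$.

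Next I would use that $M$ is a rational $SL_2$-module. For every $g \in SL_2(K)$ and every $p$-nilpotent $x \in sl_2 \otimes_k K$, the identity
\[
\rho_M(\operatorname{Ad}(g)(x)) \;=\; \rho_M(g)\,\rho_M(x)\,\rho_M(g)^{-1}
\]
holds in $\End_K(M_K)$, so $\rho_M(x)$ and $\rho_M(\operatorname{Ad}(g)(x))$ have identical Jordan form. Combined with the classical fact that over the algebraic closure every nonzero nilpotent trace-zero $2 \times 2$ matrix is $SL_2$-conjugate to the standard nilpotent $e$, this shows that the $SL_2$-action is transitive on the nonzero nullcone. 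Invoking Remark \ref{closed} to reduce the constancy test to closed points, hence after base change to points over $\bar k$, one concludes that the Jordan type of $\alpha_K^*(M_K)$ is independent of $\alpha_K$.

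For the second assertion I would exhibit an explicit non-example rather than attempt a characterization of when constant Jordan type fails. A convenient choice is a baby Verma module $Z(\lambda) = u(sl_2) \otimes_{u(b)} k_\lambda$ for $\lambda \in k$ distinct from $p-1$, where $b = \langle e, h \rangle$ is the standard Borel. On the basis $v_i = f^i \otimes 1$, $0 \le i \le p-1$, the element $f$ acts as a single Jordan block of size $p$, while $e$ acts via $e(v_i) = i(\lambda - i + 1)\,v_{i-1}$; the latter operator has strictly smaller Jordan type as soon as some coefficient $i(\lambda - i + 1)$ vanishes, which happens for every $\lambda \in \{0,\ldots,p-2\}$. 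Hence for such $\lambda$ the $\pi$-points $t \mapsto e$ and $t \mapsto f$ give different Jordan types on $Z(\lambda)$, so $Z(\lambda)$ is not of constant Jordan type, confirming that rational $SL_2$-modules form a very restrictive subclass of $u(sl_2)$-modules.

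The main obstacle in this outline is the first step: verifying that every $\pi$-point of $SL_{2(1)}$ is equivalent to one of the linear form $t \mapsto x$ for a nonzero $p$-nilpotent $x \in sl_2 \otimes_k K$, and that its Jordan type on $M$ is that of $\rho_M(x)$. This is a nontrivial input from the general theory of $\pi$-points for infinitesimal height-one group schemes (\cite{FP1}); once granted, the remaining argument reduces to the single-orbit description of the nullcone of $sl_2$ under the adjoint $SL_2$-action together with the elementary $SL_2$-equivariance identity.
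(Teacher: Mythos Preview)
Your argument is correct and follows essentially the same route as the paper: both identify $\pi$-points of $SL_{2(1)}$ with nonzero $p$-nilpotents in $sl_2$, then use the transitivity of the adjoint $SL_2$-action on the nullcone together with the equivariance identity $\rho_M(\operatorname{Ad}(g)x)=\rho_M(g)\rho_M(x)\rho_M(g)^{-1}$ (the paper phrases this as an isomorphism $M_K \cong M_K^{\rho(x)}$) and Proposition~\ref{rep} to conclude constancy. For the negative half, the paper's example $u(sl_2)\otimes_{u(b)}k$ is your $Z(0)$; the paper additionally frames the failure via the support variety $\Pi(G)_M$ being a proper nonempty subset of the one-dimensional $\Pi(G)$, whereas you verify it by the explicit Jordan computation on $e$ and $f$.
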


\begin{proof}  Let $\xymatrix@-.5pc{\rho: SL_2 \ar[r] & GL_m}$ 
determine a rational $SL_2$-module
$M$ of dimension $m$.  Then the induced map of $p$-restricted Lie algebras
$\xymatrix@-.5pc{d\rho: sl_2 \ar[r] & gl_m}$ defines the associated 
$u(sl_2)$-module structure on $M$.  For any field extension $K/k$ and any
$ x \in SL_2(K)$, the rational $SL_{2,K}$-module $M_K^{\rho(x)}$ given by 
$Ad(\rho(x)) \circ \rho$ is isomorphic to $M_K$ and thus the associated
$u(sl_{2,K})$-module $M_K^{\rho(x)}$ given by $d(Ad(\rho(x) \circ \rho)$ is 
isomorphic to $M_K$.

Recall that the space of $k$-rational points of 
$\Pi(G)$ can be identified with 
the space of $k$-rational lines of the nilpotent variety $\N(sl_2)$.
Indeed, each equivalence
class of $\pi$-points  is represented by some 
$\xymatrix@-.5pc{\alpha_K: K[t]/t^p \ar[r] & u(sl_{2,K})}$ sending
$t$ to a nilpotent matrix of $sl_2(K)$.  Moreover,
the Jordan type of a given finite
dimensional $u(sl_2)$-module does not depend upon the choice of such 
a representative of a given point of $\Pi(G)$ by \cite[3.1]{FPS}. 
The action of $SL_2(K)$ on $\Pi(G)_K$ sending
$\alpha_K$ to  $d\rho(Ad(x))\circ \alpha_K = 
Ad(\rho(x)) \circ \alpha_K$ corresponds to the 
natural action of $SL_2$ on $\N(sl_2)$.  Consequently, the 
transitivity of this action 
together with Proposition \ref{rep} implies that the rational $SL_2$-module
$M$ has constant Jordan type as a $u(sl_2)$-module.

On the other hand, any $u(sl_2)$-module $M$ whose $\Pi$-support
space $\Pi(G)_M$ is a non-empty proper subset of the 1-dimensional
variety $\Pi(G)$ has
Jordan type of $M$ at a generic $\pi$-point of $\Pi(G)$ equal to  
$\frac{m}{p}[p]$ (where $m = \Dim M$) and strictly smaller Jordan type at any
$\pi$-point representing a point in $\Pi(G)_M$. 
Such $M$ abound, since every finite subset of $\Pi(G)$ 
is of the form $\Pi(G)_M$
for some finite dimensional $kG$-module $M$.   
For example, let $b \subset sl_2$ be the Lie subalgebra of lower 
triangular matrices and consider the $u(sl_2)$-module 
$M = u(sl_2) \otimes_{u(b)} k$,
the module obtained from the trivial $u(b)$-module by coinduction.  Then
$M$ is free when
restricted to the 1-dimensional subalgebra of 
strictly upper triangular matrices
and trivial when restricted to the 1-dimensional subalgebra of strictly lower
triangular matrices.
\end{proof}

We conclude this section of explicit examples 
with a family of modules $V_n, n > 0$
which are modules of constant Jordan type 
$n[2] + 1[1]$ regardless of the 
prime $p$.  The module $V_n$ can be 
represented by the following diagram:
$$
\begin{xy}*!C\xybox{%
\xymatrix{ &\stackrel{\langle v_1 \rangle }{\bu} 
\ar[dl]|y \ar[dr]|x &&
\stackrel{\langle v_2 \rangle }{\bu} \ar[dl]|y \ar[dr]|x
&&\dots &&\stackrel{\langle v_n \rangle }{\bu} \ar[dl]|y \ar[dr]|x\\
\bu && \bu && \bu &\dots &  \bu && \bu}}
\end{xy}
$$
The verification of the assertion 
that $V_n$ does indeed have constant
Jordan type is a simple computation.

In the following proposition we describe the modules $V_n$
explicitly in terms of generators and relations. 

\begin{prop}
 Consider a rank 2 elementary abelian $p$-group $E$,
so that $kE= k[x,y]/(x^p,y^p)$.  Consider the $kE$-module $V_n$ of 
dimension $2n+1$ generated by $v_1,\ldots,v_n$ spanned as a 
$k$-vector space by
$\{ v_1,\ldots,v_n,x(v_1),\ldots,x(v_n),y(v_1)\}$ with 
$x(v_i) = y(v_{i+1}), ~ x^2(v_i) = xy(v_i) = y^2(v_i) = 0$. Then $V_n$ 
has constant Jordan type $n[2] + 1[1]$.
\end{prop}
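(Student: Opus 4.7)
The plan is to exploit the fact that $V_n$ has Loewy length $2$. Indeed, the given relations $x^2(v_i) = xy(v_i) = y^2(v_i) = 0$ say precisely that $\mathrm{rad}(kE)^2$ annihilates each generator $v_i$, while the remaining basis vectors $x(v_i)$ and $y(v_1)$ are already annihilated by $\mathrm{rad}(kE)$. Hence $\mathrm{rad}(kE)^2 \cdot V_n = 0$, so every element of $\mathrm{rad}(kE)$ acts on $V_n$ through its image in $\mathrm{rad}/\mathrm{rad}^2$ and in particular squares to zero on $V_n$.

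Given any $\pi$-point $\alpha_K \colon K[t]/t^p \to KE$, I would invoke the argument used in the proof of Proposition~\ref{aug} (via \cite[2.2, 2.6]{FP1}) to write $\alpha_K(t) = ax + by + h$ with $(a,b) \in K^2 \setminus \{0\}$ and $h \in \mathrm{rad}(KE)^2$. By the previous paragraph the operator on $(V_n)_K$ induced by $\alpha_K(t)$ coincides with the operator induced by $\ell := ax + by$, and this operator squares to zero; every Jordan block therefore has size at most $2$, and the Jordan type at $\alpha_K$ is determined by the rank of $\ell$. Using the relation $y(v_i) = x(v_{i-1})$ for $i \geq 2$ together with $\ell(x(v_i)) = \ell(y(v_1)) = 0$, one computes
\[
\ell(v_1) = a\, x(v_1) + b\, y(v_1), \qquad \ell(v_i) = a\, x(v_i) + b\, x(v_{i-1}) \quad (2 \leq i \leq n).
\]
These $n$ vectors lie in the $(n+1)$-dimensional subspace of $(V_n)_K$ spanned by $x(v_1), \ldots, x(v_n), y(v_1)$, and a short case split (according as $a = 0$ or $a \neq 0$, the latter yielding an evident triangular pattern in the $x(v_i)$) shows they are linearly independent. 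Hence $\mathrm{rank}(\ell) = n$ on $(V_n)_K$, yielding Jordan type $n[2] + 1[1]$ independently of $\alpha_K$.

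The only substantive step is the Loewy length $2$ observation; it is what rescues us in the absence of the automorphism trick of Proposition~\ref{aug}, since $V_n$ is not stable under general automorphisms of $kE$. Once that observation is noted, the reduction to the linear part of $\alpha_K(t)$ is automatic and the rank computation is routine, so no real obstacle remains.
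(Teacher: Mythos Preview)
Your argument is correct. The paper does not give a proof of this proposition; it simply remarks that ``the verification of the assertion that $V_n$ does indeed have constant Jordan type is a simple computation'' and then records the statement. Your write-up supplies exactly such a computation, and the Loewy-length-$2$ observation is a clean way to organize it: it immediately reduces the action of an arbitrary $\pi$-point to that of its linear part $ax+by$, after which the rank-$n$ check is the bidiagonal calculation you indicate.
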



\section{Specialization, $\Gamma(G)_M$, and constant Jordan
type}
\label{sp}

In this section, we introduce a strict form of specialization 
of $\pi$-points which was
considered briefly in \cite[3.2]{FPS} for infinitesimal group schemes.
We also recall the non-maximal support variety $\Gamma(G)_M$ of a finite
dimensional $kG$-module $M$.  We use both strict specialization and
the non-maximal support variety to further explore the class of modules
of constant Jordan type.

The condition recalled in Definition \ref{pi}
that the $\pi$-point $\alpha_K$ specializes to the $\pi$-point
$\beta_L$ is equivalent to the algebro-geometric condition
that the point $[\alpha_K] \in \Pi(G)$ 
specializes to $[\beta_L]$ (\cite{FP2}).
For some purposes, this notion of specialization is not sufficiently strong. 
Namely, if $\Pi(G)$ is reducible, then 
there exist finite dimensional $kG$-modules $M$ and 
$\pi$-points $\alpha_K, \beta_L$
such that $\alpha_K$ specializes to $\beta_L$ but
the Jordan type of $\alpha_K^*(M_K)$ is smaller than 
the Jordan type of $\beta_L^*(M_L)$ (see \cite[4.14]{FP2}).

The following definition introduces a stricter definition of specialization
which is a natural extension 
of \cite[3.2]{FPS} from infinitesimal group schemes to arbitrary finite 
group schemes. 

\begin{defn}
\label{strict}
Let $G$ be a finite group scheme and $\alpha_K, \beta_L$ be 
$\pi$-points of $G$.  We say that $\alpha_K$ strictly specializes to
$\beta_L$ (and write $\alpha_K \downdownarrows \beta_L$),
if there exists a commutative local domain $R$ over $k$ with field
of fractions $K$ and residue field $L$, together with a map of
$R$-algebras $\xymatrix@-.5pc{\nu_R: R[t]/t^p \ar[r] & RG}$ 
such that $\nu_R \otimes_R K
= \alpha_K, ~ \nu_R \otimes_R L = \beta_L$.
\end{defn}

\begin{thm}
\label{matrix}
Let $R$ be a commutative local domain with field of fractions $K$ and 
residue field $L$.
Let $A \in M_N(R)$ be an $N\times N$ matrix 
which has $p$-th power $0$ and 
coefficients in  $R$.  Then the 
Jordan type of $A\otimes_R K \in M_N(K)$ is greater or equal to 
the Jordan type of
$A\otimes_RL \in M_N(L)$.

Consequently, if $\alpha_K, \beta_L$ are $\pi$-points of a finite group 
scheme $G$ with $\alpha_K \downdownarrows \beta_L$ and if $M$ is
a finite dimensional $kG$-module, then $\alpha_K^*(M_K)$ has Jordan
type greater or equal to that of $\beta_L^*(M_L)$.  In particular, 
$\alpha_K \downdownarrows \beta_L$ implies
$\alpha_K \downarrow \beta_L$.
\end{thm}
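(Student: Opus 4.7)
The proof splits into a linear-algebra semi-continuity statement and a direct application to $\pi$-points.

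For the matrix assertion, I will use the characterization of the dominance order on Jordan types recorded in the Remark preceding Definition~\ref{pi}: the Jordan type of $A\otimes_R K$ dominates that of $A\otimes_R L$ if and only if $\rank\bigl((A\otimes_R K)^j\bigr)\geq \rank\bigl((A\otimes_R L)^j\bigr)$ for every $1\leq j<p$. Since $(A\otimes_R K)^j=A^j\otimes_R K$, this reduces to the general upper semi-continuity statement: for any $B\in M_N(R)$, $\rank(B\otimes_R K)\geq \rank(B\otimes_R L)$. The proof is the standard minor argument. If $\rank(B\otimes_R L)\geq r$, then some $r\times r$ minor of $B$ has nonzero image in $L$, i.e., does not lie in the maximal ideal $\mathfrak m$ of $R$; since $R$ is a domain, such a minor is a nonzero element of $R$, hence nonzero in $K$, and therefore $\rank(B\otimes_R K)\geq r$.

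To deduce the statement for $\pi$-points, I apply this to a matrix coming from $\nu_R$. Given $\alpha_K\downdownarrows\beta_L$ realized by $\nu_R\colon R[t]/t^p\to RG$ and a finite-dimensional $kG$-module $M$ with $N=\Dim_k M$, the tensor product $M_R=R\otimes_k M$ is free of rank $N$ over $R$, and via $\nu_R$ it becomes an $R[t]/t^p$-module. Fixing an $R$-basis of $M_R$, let $A\in M_N(R)$ represent the action of $\nu_R(t)$; then $\nu_R(t)^p=\nu_R(t^p)=0$ forces $A^p=0$. The base changes $A\otimes_R K$ and $A\otimes_R L$ represent the actions of $\alpha_K(t)$ on $M_K$ and of $\beta_L(t)$ on $M_L$ respectively, so the matrix statement at once yields the Jordan-type inequality for $\alpha_K^*(M_K)$ and $\beta_L^*(M_L)$.

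The final clause is immediate from the observation that $\frac{N}{p}[p]$ is the unique maximum in the dominance order on Jordan types of dimension $N$, and that projectivity of a $k[t]/t^p$-module corresponds to attaining this maximum. Hence whenever the smaller of the two Jordan types attains the maximum (i.e., is projective), the larger one is sandwiched between the maximum and itself, forcing it to be projective as well. Applying this for every $M$ delivers the desired specialization relation $\alpha_K\downarrow\beta_L$. The argument is not really obstacle-driven: rank semi-continuity under change of fiber is classical, and the only real setup step is noting that the action of $\nu_R(t)$ on the $R$-free module $M_R$ is given by a matrix $A\in M_N(R)$ with $A^p=0$.
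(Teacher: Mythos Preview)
Your proof is correct and follows essentially the same route as the paper: reduce the Jordan-type comparison to the inequality $\rank((A\otimes_R K)^j)\geq\rank((A\otimes_R L)^j)$ for each $j$, then deduce the $\pi$-point statements directly. The only difference is cosmetic: the paper establishes rank semi-continuity by applying Nakayama's Lemma to the cokernel of $A^i$, whereas you use the equivalent nonvanishing-minor argument.
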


\begin{proof}
Let $M[i]$ denote the cokernel of 
$\xymatrix@-.5pc{A^i: R^N \ar[r] & R^N}$ for some $i < p$.  
Let $m_1,\ldots,m_s \in M[i]$ be chosen so that 
$\ol m_1,\ldots, \ol m_s \in M[i]\otimes_R L$ is a basis,
where $\ol m_i = m_i \otimes 1_L$.  By Nakayama's
Lemma,  $m_1,\ldots,m_s$ generate $ M[i]$ as an $R$-module and thus
their images  span $M[i] \otimes_R K$.  Observe that $M[i]\otimes_R K$ 
is the cokernel of $\xymatrix@-.5pc{(A\otimes_R K)^i: K^N \ar[r] & K^N}$ 
and that $M[i]\otimes_R L$ 
is the cokernel of the homomorphism 
$\xymatrix@-.5pc{(A\otimes_R L)^i: L^N \ar[r] & L^N}$.  
This implies that the rank of 
$(A\otimes_R K)^i$ is greater or equal to the rank of $(A\otimes_R L)^i$ 
for any $i < p$ so that  the 
Jordan type of $A\otimes_R K \in M_N(K)$ is greater or equal to 
the Jordan type of
$A\otimes_RL \in M_N(L)$.

The second statement is a special case of the first.
The last statement follows from the observation that if $\beta_L^*(M_L)$ is 
projective then its Jordan type is the  maximal
possible Jordan type on $M$ and hence  $\alpha_K^*(M_K)$
must also be projective assuming that $\alpha_K \downdownarrows \beta_L$.
\end{proof}

The next theorem verifies that specialization of points in $\Pi(G)$ can
be represented by strict specialization.

\begin{thm}
\label{spec}
Let $G$ be a finite group scheme and let $\alpha_K, ~ \beta_L$ be
$\pi$-points of $G$ with $\alpha_K \downarrow \beta_L$.  Then there exist 
$\pi$-points $\alpha_{K^\prime}^\prime
\sim \alpha_K$ and $\beta_{L^\prime}^\prime \sim \beta_L$ such that
$\alpha_{K^\prime}^\prime \downdownarrows \beta_{L^\prime}^\prime .$
\end{thm}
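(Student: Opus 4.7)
The idea is to translate the algebro-geometric condition of specialization in the scheme $\Pi(G)$ into a concrete $R$-linear family of $\pi$-points.

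By Theorem \ref{iso}, the hypothesis $\alpha_K \downarrow \beta_L$ is equivalent to the statement that $[\beta_L]$ lies in the Zariski closure of $[\alpha_K]$ in $\Pi(G) \cong \Proj \HHH^\bu(G,k)$. First I would produce a discrete valuation ring $R$ together with a morphism $\Spec R \to \Pi(G)$ sending the generic point to $[\alpha_K]$ and the closed point to $[\beta_L]$ --- for instance, by taking the local ring at $[\beta_L]$ of the normalization of an irreducible curve through both points. After enlarging $K$ and $L$ to finite extensions $K' \supseteq K$ and $L' \supseteq L$ matching the fraction and residue fields of $R$ (a harmless operation that only replaces $\alpha_K$ and $\beta_L$ by equivalent $\pi$-points), I have reduced the problem to constructing an $R$-algebra map $\nu_R\colon R[t]/t^p \to RG$ whose two fibers realize these enlarged $\pi$-points.

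To promote the scheme morphism to an actual $R$-algebra map, I would use that every $\pi$-point factors through a unipotent abelian subgroup scheme, and hence that $\Pi(G)$ is covered by the images of $\Pi(C) \to \Pi(G)$ as $C$ ranges over unipotent abelian subgroup schemes (possibly defined over finite extensions of $k$). After localizing on $\Spec R$ and making a further finite extension of $R$ if needed, I may assume the morphism $\Spec R \to \Pi(G)$ factors through some such $\Pi(C)$. Over a unipotent abelian $C$, a $\pi$-point is specified explicitly by an element $c \in FC$ with $c^p = 0$ and non-vanishing linear term in the sense of \cite[2.2, 2.6]{FP1}, and equivalence classes of such elements form an open subscheme of the projectivization of this affine cone. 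Lifting the section $\Spec R \to \Pi(C)$ to an $R$-point of the affine cone --- possibly after one more finite extension of $R$ to trivialize the pulled-back tautological line bundle --- produces the required element $c_R \in RC \subset RG$. Setting $\nu_R(t) := c_R$ gives an $R$-algebra map whose two fibers are, by construction, $\pi$-points equivalent to $\alpha_{K'}$ and $\beta_{L'}$. Flatness of $\nu_R$ follows from flatness of its fibers via the local criterion of flatness, since the linear parts of both fibers are non-zero.

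I expect the principal obstacle to be this parametrization step: coherently lifting a map to the projective scheme $\Pi(C)$ to an explicit element of the affine cone over $R$. The finite extensions of $R$ that must be allowed are harmless for the theorem because the equivalence relation on $\pi$-points is insensitive to base change by field extensions, but they must be introduced in a way that simultaneously preserves flatness and $p$-nilpotence of the resulting element at both the generic and the special fibers. This global-versus-local lifting issue is precisely what distinguishes the strict notion $\downdownarrows$ of Definition \ref{strict} from the weaker $\downarrow$, and it is why a theorem (rather than a tautological observation) is required.
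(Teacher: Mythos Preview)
Your overall strategy matches the paper's: translate $\alpha_K \downarrow \beta_L$ into the statement that $[\beta_L]$ lies in the closure of $[\alpha_K]$ in $\Pi(G)$, produce a local domain $R$ with a morphism $\Spec R \to \Pi(G)$ hitting both points, and then lift to some kind of affine cone to obtain the $R$-algebra map $\nu_R$. The paper in fact uses the more elementary local domain (localize $A/\mathfrak p_{[\beta_L]}$ at $\mathfrak p_{[\alpha_K]}$) rather than passing to a DVR via normalization, but your choice would also serve.

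The divergence, and the real gap in your sketch, is the lifting step. You want to factor $\Spec R \to \Pi(G)$ through $\Pi(C) \to \Pi(G)$ for a single unipotent abelian $C$ and then lift to an ``affine cone'' of $p$-nilpotent elements in $kC$. Two issues arise. First, the maps $\Pi(C)\to\Pi(G)$ are closed rather than open, so the fact that they jointly cover $\Pi(G)$ does not by itself let you factor the two-point scheme $\Spec R$ through one of them, nor lift the scheme morphism even if the image happens to contain both points. Second, for a general unipotent abelian $C$ there is no clean identification of $\Pi(C)\cong\Proj\HHH^\bu(C,k)$ with the projectivization of the set of $p$-nilpotent elements with nonzero linear part; that picture is correct for elementary abelian groups but not canonically for arbitrary $C$.

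The paper avoids both difficulties by a sharper reduction. Citing \cite[4.13]{FP2}, one finds a single elementary abelian $E\subset\pi_0(G)$ such that both $[\alpha_K]$ and $[\beta_L]$ lie in the image of $\Pi((G^0)^E\times E)\to\Pi(G)$; since the group algebra of $(G^0)^E\times E$ is isomorphic to that of an \emph{infinitesimal} group scheme $H$, and since both $\downarrow$ and $\downdownarrows$ depend only on the group algebra, one simply replaces $G$ by $H$ outright rather than lifting through a map of $\Pi$-spaces. For infinitesimal $H$ of height $r$ there is a genuine affine cone---the scheme $V_r(H)$ of $1$-parameter subgroups---together with the isogeny $\HHH^\bu(H,k)\to k[V_r(H)]$ of \cite{SFB}, so $\Spec R\to\Pi(H)$ lifts to $\Spec R\to V_r(H)$ directly, giving an $R$-morphism $\bG_{a(r),R}\to H_R$ and hence $\nu_R$ by composing with the standard $\epsilon\colon R[t]/t^p\to R\bG_{a(r)}$. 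This reduction to the infinitesimal case and the invocation of $V_r$ is exactly the missing ingredient in your outline. (A small aside: Definition~\ref{strict} does not require $\nu_R$ itself to be flat, only that its two fibers be $\pi$-points, so your closing remark about the local criterion of flatness is not needed.)
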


\begin{proof}
Using \cite[4.13]{FP2}, we can choose 
an elementary abelian $p$-group $E \subset \pi_0(G)$ 
such that $[\alpha_K], [\beta_L]$ are in the image of the closed
map $\xymatrix@-.5pc{\Pi((G^0)^E \times E) \ar[r] & \Pi(G)}$.  Observe that 
the group algebra of $(G^0)^E \times E$ is isomorphic to the group algebra
of some infinitesimal group scheme $H$.   Because the relationship of
strict specialization given in Definition \ref{strict} and the definition of 
specialization given in Definition \ref{pi}
involve only the group algebra of the given finite group scheme, we may
replace $G$ by $H$.  Thus, we assume $G$ is an infinitesimal group
scheme.

Let $\Spec A \subset \Pi(G)$ be an irreducible affine open subset containing
$[\beta_L]$.  Since $\alpha_K \downarrow \beta_L$, we have that $[\beta_L]$ 
is in the closure 
of $[\alpha_K]$ by \cite[4.3]{FP2}.  Hence, any closed irreducible component 
in the complement of $\Spec A$ containing $[\alpha_K]$ must contain 
$[\beta_L]$.  We conclude that $[\alpha_K] \in \Spec A$.

Replacing $A$ by $A_{\rm red}$, we may
assume that $A$ is a domain.  Let $R$ denote the local $A$-algebra defined
as the localization at the prime corresponding to $[\alpha_K]$ of the 
quotient of $A$ by the prime corresponding to $[\beta_L]$.  Set
$K^\prime$ to be the field of fractions of $R$ and $L^\prime$ to be
the residue field of $R$.  

Let $r$ denote the height of the infinitesimal group scheme $G$ and let
$V_r(G)$ be the scheme of 1-parameter subgroups of $G$. Recall that the
natural morphism $\xymatrix@-.5pc{\Theta_G:  V_r(G)\backslash \{ 0 \} 
\ar[r] & \Pi(G)}$, which is
determined by sending a 1-parameter subgroup 
$\xymatrix@-.5pc{\mu: \bG_{a(r),K}  \ar[r] & G_K}$
to the $\pi$-point 
$\xymatrix@-.6pc{\mu_* \circ \epsilon: K[t]/t^p \ar[r] &
K\bG_{a(r)} \ar[r] & KG}$, where $\epsilon$ has the property
that its composition with the map on group algebras induced
by the projection $\xymatrix@-.5pc{\bG_{a(r)} 
\ar[r]& \bG_{a(r)}/\bG_{a(r-1)}}$, is an isomorphism. 
Because  $\Theta_G$ together with the isogeny $\HHH^\bu(G,k) \to k[V_r(G)]$ 
of \cite[5.2]{SFB} induces the isomorphism 
$\xymatrix@-.5pc{\Proj(V_r(G)) \ar[r] & \Pi(G)}$
of Theorem \ref{iso}, we conclude that the given morphism 
$\xymatrix@-.5pc{\Spec R \ar[r] & \Pi(G)}$ lifts to a 
morphism 
$$
\xymatrix{\Spec R \ar[r] & (\Spec H^\bu(G,k)[1/P])_0 \subset V_r(G),}
$$ 
which corresponds to a morphism of $R$-group schemes 
$\xymatrix@-.5pc{\nu: \bG_{a(r),R} \ar[r] & G_R}$. Here, $P$ is a 
homogeneous element of positive degree of $\HHH^\bu(G,k)$ which does not 
vanish on the image of the closed point of $\Spec R$ and $(A^\bu)_0$ 
denotes the subalgebra of degree 0 elements of the graded algebra $A^\bu$.

We define $\nu_R$ to be the map of $R$-algebras given as the composition 
$$
\xymatrix{
R[t]/t^p  \ar[r]^{\epsilon} & R(\bG_{a(r),R}) \ar[r]^{\quad\quad\nu_*} & RG.
}
$$ 
By construction, $\nu_R \otimes_R K^\prime$ is a $\pi$-point of $G$ in the  
equivalence class $[\alpha_K]$ (the image of $\Spec K^\prime$) and 
$\nu_R \otimes_R L^\prime$ is a $\pi$-point of $G$ in the 
equivalence class $[\beta_L]$.
\end{proof}

The proof of the theorem in the title of \cite{C1} 
applies with only minor notational change to prove
the same assertion, given below, for an arbitrary finite group scheme.

\begin{thm} (\cite{C1})
\label{connected}
Let $G$ be a finite group scheme and $M$ a finite dimensional 
indecomposable $kG$-module.  
Then $\Pi(G)_M$ is connected.
In particular, $\Pi(G) = \Pi(G)_k$ is connected.
\end{thm}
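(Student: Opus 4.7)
The plan is to adapt Carlson's original argument from \cite{C1}, replacing the cohomological support variety of that paper with the scheme $\Pi(G)_M$ and invoking the identification $\Pi(G) \simeq \Proj \HHH^\bu(G,k)$ of Theorem \ref{iso} in place of the Avrunin--Scott theorem. First I would dispose of the trivial case: if $M$ is projective, then $\Pi(G)_M$ is empty and hence vacuously connected, so assume $M$ is non-projective.

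Assume for contradiction that $\Pi(G)_M = X_1 \sqcup X_2$ is a disjoint union of two non-empty closed subsets. The strategy is to produce a non-trivial idempotent decomposition $\mathrm{id}_M = e_1 + e_2$ in the stable endomorphism ring $\underline{\End}_{kG}(M)$, which will contradict indecomposability since $M$ non-projective and indecomposable forces this ring to be local. Following Carlson, the idempotents come from Carlson modules: for a homogeneous class $\zeta \in \HHH^n(G,k)$, the kernel $L_\zeta$ in the sequence $0 \to L_\zeta \to \Omega^n(k) \to k \to 0$ representing $\zeta$ satisfies $\Pi(G)_{L_\zeta}$ equal to the zero locus of $\zeta$, a computation formal in the cohomology ring. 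Choosing homogeneous classes whose zero loci separate $X_1$ from $X_2$ and tensoring appropriate shifts of the associated Carlson modules with $M$, one obtains, exactly as in \cite{C1}, a decomposition of $M$ in the stable category into summands supported on the respective components $X_1$ and $X_2$; the associated projections give the desired orthogonal idempotents in $\underline{\End}_{kG}(M)$.

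The final assertion $\Pi(G) = \Pi(G)_k$ follows by applying the main statement to the trivial module $M = k$, which is indecomposable and, except in the semi-simple case (where $\Pi(G)$ is empty), non-projective.

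The main obstacle is ensuring that every cohomological ingredient of Carlson's proof carries over to arbitrary finite group schemes: finite generation of $\HHH^\bu(G,k)$, the identification of $\Pi(G)_M$ with the support of $\Ext^*_{kG}(M,M)$ as an $\HHH^\bu(G,k)$-module, and the computation of $\Pi(G)_{L_\zeta}$. Each is available in prior literature --- finite generation by Friedlander--Suslin, support identification encoded in Theorem \ref{iso}, and the Carlson module calculation being purely formal --- so that, as the paper asserts, only notational changes are required.
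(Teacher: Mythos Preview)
Your proposal is correct and takes essentially the same approach as the paper: the paper gives no proof beyond the remark that Carlson's argument in \cite{C1} applies with only minor notational change to an arbitrary finite group scheme, and your sketch is precisely an outline of that argument together with the observation that the necessary cohomological ingredients (finite generation, the identification of Theorem~\ref{iso}, and the support computation for Carlson modules) are available in this generality.
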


Theorems \ref{spec} and \ref{connected} enable the following
equivalent formulation of the condition that a module has constant Jordan type.

\begin{prop}
\label{explicit}
Let $G$ be a finite group scheme and let $M$ be a $kG$-module.
Then $M$ has constant Jordan type if and only if 
$\alpha_K$ and  $\beta_L$ have the same Jordan type on $M$ 
whenever $\alpha_K, ~ \beta_L$
are $\pi$-points of $G$ satisfying
$\alpha_K \downdownarrows \beta_L$.
\end{prop}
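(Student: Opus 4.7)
The forward direction is immediate: constant Jordan type means all $\pi$-points of $G$ give the same Jordan type on $M$, and in particular any two related by $\downdownarrows$. For the converse, assume that $\alpha_K \downdownarrows \beta_L$ implies equal Jordan types on $M$. Let $T$ denote the maximum Jordan type (in the dominance order of \eqref{dom}) realized on $M$ by any $\pi$-point of $G$, necessarily achieved since there are only finitely many possible types for a module of fixed dimension, and fix some $\alpha_K$ with $\alpha_K^*(M_K)$ of type $T$. The plan is to show every $\pi$-point of $G$ has Jordan type $T$ on $M$. To that end I would introduce
\[
S \ = \ \{[\gamma] \in \Pi(G) : \gamma \text{ has Jordan type } T \text{ on } M\};
\]
by Remark \ref{indep}(2), maximality makes this well-defined on equivalence classes, and $[\alpha_K] \in S$.

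The key step is to show $S$ is stable under both specialization and generization. For specialization, given $[\gamma] \in S$ with $[\gamma] \downarrow [\delta]$, Theorem \ref{spec} produces $\gamma' \sim \gamma$ and $\delta' \sim \delta$ with $\gamma' \downdownarrows \delta'$; Remark \ref{indep}(2) propagates the maximal type $T$ from $\gamma$ to $\gamma'$, and the standing hypothesis then transfers it to $\delta'$, so $[\delta] \in S$. For generization, given $[\delta] \in S$ with $[\gamma] \downarrow [\delta]$, the same construction yields $\gamma' \downdownarrows \delta'$ with $\delta'$ of type $T$; Theorem \ref{matrix} then shows the Jordan type of $\gamma'$ dominates $T$, and maximality forces equality. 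Notably the generization direction uses only Theorem \ref{matrix}, not the hypothesis.

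Finally I would invoke Theorem \ref{connected}: since $\Pi(G)$ is a connected Noetherian scheme with finitely many irreducible components, any two of its points can be joined by a zig-zag chain of specializations, routed through generic points of irreducible components and points in their pairwise intersections. Iterating the two closure properties along such a chain starting from $[\alpha_K]$ yields $S = \Pi(G)$, so every $\pi$-point has Jordan type $T$ on $M$, which is constant Jordan type. The main obstacle I foresee is the careful bookkeeping between $\pi$-points and their equivalence-class representatives: both Theorem \ref{spec} and Remark \ref{indep}(2) are asymmetric in the way they compare Jordan types, and one must rely on the global maximality of $T$ at every step to move freely between representatives along the chain without altering the recorded Jordan type.
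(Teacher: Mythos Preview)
Your proposal is correct and follows essentially the same route as the paper's proof: both start from a point of maximal Jordan type, use Theorem~\ref{connected} together with the Noetherian property to build a zig-zag chain through generic points of irreducible components and points in their intersections, invoke Theorem~\ref{spec} to replace each specialization by a strict one, and then propagate the maximal type along the chain using the hypothesis and Remark~\ref{indep}(2). Your observation that the generization step needs only Theorem~\ref{matrix} and maximality (not the hypothesis) is a nice sharpening, though the paper simply applies the hypothesis uniformly in both directions.
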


\begin{proof}
If $M$ has constant Jordan type and  $\alpha_K, ~ \beta_L$
are $\pi$-points of $G$, then the Jordan types of 
$\alpha_K$ and  
and $\beta_L$ on $M$ are necessarily equal. 

To prove the converse, we recall that if $x \in \Pi(G)$ is a point such
that $M$ has maximal Jordan type at some representative of $x$, then 
the Jordan type on $M$ is the same for every representative 
of $x$ (see Remark~\ref{indep}).  
Let $x \in \Pi(G)$ be a point which has maximal Jordan type on $M$ and let
$y \in \Pi(G)$ be an arbitrary point.   Since $\Pi(G)$ is connected (by 
Theorem \ref{connected}) and Noetherian, we may find a chain of 
irreducible components $Z_0, \ldots, Z_r$ of $\Pi(G)$ such that 
$Z_i \cap Z_{i+1} \not= \emptyset$ and $x \in Z_0, y \in Z_r$.  Let
$\eta_i \in Z_i$ be the generic point and choose 
some $z_i \in  Z_i \cap Z_{i+1}$.
Then $x,\eta_0,z_1,\eta_1,\ldots, z_i,\eta_i,\dots,\eta_r,y$
is  a chain  of points $x=x_0,x_1, \ldots,x_n=y \in \Pi(G)$ such that there 
exist morphisms
$\xymatrix@-.5pc{\Spec R_i\ar[r] & \Pi(G)_{\rm red}}$ sending 
$\{ \Spec K_i, \Spec L_i \}$ 
to the (unordered)
pair $\{ x_{i-1},x_i \}$ where $R_i$ is a commutative 
local domain with field 
of fractions $K_i$
and residue field $L_i$.    

Our hypothesis in conjunction with Theorem \ref{spec}
implies that there are representing $\pi$-points $\alpha_{K_i}, ~ \alpha_{L_i}$
of $x_i$ and  $x_{i-1}$ at which $M$ has the same Jordan type. 
Consequently, $M$ has the same 
maximal Jordan type at each representative of each $x_i$.
Thus, the Jordan type of $M$ at any representative of $x_n = y$ equals this
same maximal Jordan type; in other words, $M$ has constant Jordan type. 
\end{proof}

We give a useful characterization of modules $M$  of constant Jordan type
in terms of their non-maximal support varieties $\Gamma(G)_M$ recalled in
Remark \ref{indep}.

\begin{prop}
\label{empty}
Let $M$ be a finite dimensional $kG$-module.  Then $M$ has 
constant Jordan type if and only if $\Gamma(G)_M = \emptyset$.
\end{prop}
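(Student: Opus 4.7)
The plan is to prove the equivalence using Proposition \ref{explicit} as the characterization of constant Jordan type in terms of strict specialization, together with Theorem \ref{matrix}.

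For the forward implication, suppose $M$ has constant Jordan type. Then for any two $\pi$-points $\alpha_K, \beta_L$ of $G$, the Jordan types of $\alpha_K^*(M_K)$ and $\beta_L^*(M_L)$ agree. In particular, there is no $\pi$-point whose Jordan type on $M$ strictly dominates that of any other, so every $\pi$-point has maximal Jordan type on $M$. Hence $\Gamma(G)_M = \emptyset$.

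For the converse, assume $\Gamma(G)_M = \emptyset$, so every $\pi$-point of $G$ has maximal Jordan type on $M$. By Proposition \ref{explicit}, to conclude that $M$ has constant Jordan type it suffices to show that whenever $\alpha_K \downdownarrows \beta_L$, the two $\pi$-points have the same Jordan type on $M$. Theorem \ref{matrix} gives the inequality
\[
\JType(\alpha_K^*(M_K)) \;\geq\; \JType(\beta_L^*(M_L))
\]
in the dominance order. If this inequality were strict, then $\alpha_K$ would exhibit a $\pi$-point whose Jordan type on $M$ is strictly greater than that of $\beta_L$, contradicting the assumption that $\beta_L$ has maximal Jordan type on $M$ (which holds by $\Gamma(G)_M = \emptyset$). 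Therefore equality holds, and Proposition \ref{explicit} yields the desired conclusion.

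The main subtlety here is that "maximal" refers to maximality in the partial dominance order of Jordan types, so two maximal elements need not a priori coincide. The key role of Theorem \ref{matrix} is precisely to force comparability along strict specializations, which is exactly what converts the pointwise maximality hypothesis into the global constancy needed to apply Proposition \ref{explicit}.
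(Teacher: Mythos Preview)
Your proof is correct and follows essentially the same argument as the paper's own proof: the forward direction is immediate, and for the converse you combine Theorem~\ref{matrix} with the hypothesis $\Gamma(G)_M=\emptyset$ to upgrade the dominance inequality along a strict specialization to an equality, then invoke Proposition~\ref{explicit}. Your closing remark about the subtlety of the partial order is a helpful elaboration, but the logical skeleton matches the paper exactly.
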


\begin{proof}
If $M$ has constant Jordan type, then clearly $\Gamma(G)_M = \emptyset$.
Conversely, let $\Gamma(G)_M = \emptyset$,  and let 
$\alpha_K$, $\beta_L$ be any two $\pi$-points of $G$ 
satisfying   $\alpha_K \downdownarrows \beta_L$.  By 
Theorem~\ref{matrix}, $\alpha_K^*(M_K)$ has Jordan
type greater or equal to that of $\beta_L^*(M_L)$. 
Since $\Gamma(G)_M = \emptyset$, we must have that 
the Jordan types of  $\alpha_K^*(M_K)$ and $\beta_L^*(M_L)$ 
are the same.    Hence, 
Proposition~\ref{explicit} implies that $M$ has constant Jordan type. 
\end{proof}

The following ``closure property" of modules of constant Jordan type
is somewhat striking.

\begin{thm}
\label{summand}
Let $M$ be a $kG$-module of constant Jordan type.  Then any 
direct summand of $M$
also has constant Jordan type.
\end{thm}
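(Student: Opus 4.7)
The plan is to combine Proposition~\ref{explicit} with the rank reformulation of the dominance order noted in the remark following equation~(\ref{dom}). Fix a direct sum decomposition $M = N \oplus N'$ with $N$ the given summand. Since each $\pi$-point functor $\alpha_K^*(-)$ commutes with direct sums, the Jordan type of $\alpha_K^*(M_K)$ is the sum of those of $\alpha_K^*(N_K)$ and $\alpha_K^*(N'_K)$, and correspondingly the rank of $(\alpha_K(t))^j$ acting on $M_K$ splits as the sum of the analogous ranks on $N_K$ and $N'_K$.

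By Proposition~\ref{explicit} it suffices to show that $\alpha_K^*(N_K)$ and $\beta_L^*(N_L)$ have the same Jordan type for every strict specialization $\alpha_K \downdownarrows \beta_L$ of $\pi$-points. Given such a specialization, I will apply Theorem~\ref{matrix} separately to $N$ and to $N'$ to obtain, for each $j$ with $1 \leq j < p$,
\begin{equation*}
\rank((\alpha_K(t))^j \mid N_K) \geq \rank((\beta_L(t))^j \mid N_L),
\end{equation*}
together with the analogous inequality for $N'$ in place of $N$.

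The constant Jordan type hypothesis on $M$, reinterpreted via Proposition~\ref{explicit}, says that the $j$th rank on $M_K$ equals the $j$th rank on $M_L$. Additivity of rank under direct sum then forces each of the two inequalities above to be an equality for every $j$, so $\alpha_K^*(N_K)$ and $\beta_L^*(N_L)$ agree in all rank data and hence have the same Jordan type. Reapplying Proposition~\ref{explicit} shows that $N$ (and symmetrically $N'$) has constant Jordan type.

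The conceptual point that this strategy finesses is that equivalent $\pi$-points need not have comparable Jordan types on a given module, so one cannot simply argue pointwise on $\Pi(G)$; strict specialization, through Theorem~\ref{matrix}, is precisely the refinement that produces the one-sided rank inequality needed to exploit additivity. Beyond assembling these two ingredients in the correct order, I do not anticipate a genuine obstacle.
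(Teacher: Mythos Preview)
Your proposal is correct and follows essentially the same argument as the paper: apply Theorem~\ref{matrix} to each summand to get dominance inequalities under strict specialization, use constancy of $M$ to force these to be equalities, and conclude via Proposition~\ref{explicit}. The only cosmetic difference is that you phrase the dominance comparison in terms of ranks of powers of $t$ (as in the remark after~(\ref{dom})) whereas the paper works directly with Jordan types, but the content is identical.
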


\begin{proof}
Write $M = M^\prime \oplus M^{\prime \prime}$.  
Let $\alpha_K$, $\beta_L$ be two $\pi$-points such that 
$\alpha_K \downdownarrows \beta_L$.   
By Theorem \ref{matrix},  we have 
\begin{equation}
\label{mprime}
\JType(\alpha_K^*(M^\prime_K)) \geq \JType(\beta_L^*(M^\prime_L))
\text{ and } 
\JType(\alpha_K^*(M^{\prime\prime}_K)) \geq 
\JType(\beta_L^*(M^{\prime\prime}_L)).
\end{equation}
Hence, 
$$
\JType(\alpha_K^*(M^\prime_K)) \oplus \JType(\alpha_K^*(M^{\prime\prime}_K))
= \JType(\alpha_K^*(M)) = \JType(\beta_L^*(M_L)) =  
$$
$$ 
= \JType(\beta_L^*(M^\prime_L)) \oplus 
\JType(\beta_L^*(M^{\prime\prime}_L)),
$$
where $\JType(-)$ is the Jordan type function.
Therefore, both inequalities in (\ref{mprime}) must be equalities. 
Since this holds for any pair $\alpha_K \downdownarrows \beta_L$, 
the statement now follows immediately from Proposition \ref{explicit}.
\end{proof}

\vskip .2in


\section{Behavior with respect to tensor products}
\label{behave}

We begin with the following ``order preserving'' property of Jordan 
types.  The proof given below uses an explicit description of the
tensor product of indecomposable $k[t]/t^p$-modules which is presented
in the Appendix.  The coalgebra structure we (implicitly) give $k[t]/t^p$
is that for which $t$ is primitive (i.e., 
$\nabla(t) = t \otimes 1 + 1 \otimes t$).

\begin{prop}
\label{cyclic}
Let $M$, $N$ and $L$ be $k[t]/t^p$-modules with Jordan types
$\ul a = a_p[p] + \cdots + a_1[1], ~ \ul b = b_p[p] + \cdots + b_1[1]$
and $\ul c = c_p[p] + \cdots + c_1[1]$ respectively such that 
$\Dim M ~ = ~ \Dim N$.  Let 
$\ul{a} \otimes \ul{c},~ \ul {b} \otimes \ul{c}$ denote the Jordan types of
$M\otimes L, ~ N\otimes L$ respectively.

If $\ul a ~ \geq ~ \ul b$, then $\ul a\otimes \ul c ~ 
\geq ~ \ul b \otimes \ul c$.

If $\ul a ~ > ~ \ul b$ and if $c_i \not=0$ for some $i < p$, 
then $ \ul a \otimes \ul c ~ > ~ \ul b \otimes \ul c$.

\end{prop}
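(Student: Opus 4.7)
The plan is to deduce the weak inequality from Theorem~\ref{matrix} via a geometric degeneration argument, and then handle the strict inequality by reducing to the case where $L$ is a single non-projective indecomposable and invoking the explicit decomposition of $[i] \otimes [c]$ from the Appendix.

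For the weak inequality, the key is to exhibit an $R$-family of $p$-nilpotent matrices degenerating $\ul a$ to $\ul b$. Since $\ul a \geq \ul b$ in dominance, such a family exists: it arises from the standard description of orbit closures in the nilpotent cone (matrices with $A^p=0$ form a closed subvariety inside the nilpotent cone, and the restricted orbit closures are still governed by dominance), or, more constructively, by composing explicit elementary deformations realizing the covers $[i+1]+[j-1] \rightsquigarrow [i]+[j]$ (with $i\geq j$, $i+1\leq p$). This yields a commutative local domain $R$ with fraction field $K$ and residue field $\kappa$, together with $A \in M_n(R)$ satisfying $A^p = 0$, with $A \otimes_R K$ of Jordan type $\ul a$ and $A \otimes_R \kappa$ of Jordan type $\ul b$. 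Now form $B := A \otimes I + I \otimes \rho_L \in M_{n \Dim L}(R)$. Because characteristic $p$ annihilates $\binom{p}{k}$ for $0 < k < p$, we have $B^p = A^p \otimes I + I \otimes \rho_L^p = 0$. The generic and closed fibers of $B$ represent $t$ acting on $M_K \otimes L_K$ (of type $\ul a \otimes \ul c$) and $M_\kappa \otimes L_\kappa$ (of type $\ul b \otimes \ul c$), so Theorem~\ref{matrix} applied to $B$ delivers $\ul a \otimes \ul c \geq \ul b \otimes \ul c$.

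For the strict inequality, split $L = L_{\rm proj} \oplus L_{\rm stab}$ where $L_{\rm proj}$ is the $[p]$-isotypic component. The tensor product $M \otimes L_{\rm proj}$ is free of rank $c_p \Dim M$, so its Jordan type depends only on $\Dim M = \Dim N$ and contributes identically on both sides. We are reduced to proving $\ul a \otimes \ul c' > \ul b \otimes \ul c'$, where $\ul c' \neq 0$ is the Jordan type of $L_{\rm stab}$, all of whose blocks have size less than $p$. By additivity of ranks across direct summands of $L$ it suffices to treat $L_{\rm stab} = [c]$ for a single $c$ with $1 \leq c < p$; the case $c = 1$ is immediate since $- \otimes [1]$ is the identity. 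For $c \geq 2$ I would invoke the explicit decomposition of $[i] \otimes [c]$ from the Appendix, and show that for some $m$ the inequality $r_m([i+1] \otimes [c]) + r_m([j-1] \otimes [c]) > r_m([i] \otimes [c]) + r_m([j] \otimes [c])$ holds whenever $i \geq j$, $i+1 \leq p$ -- that is, a strict discrete convexity in $i$ along each elementary dominance cover.

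The main obstacle will be the verification of this strict convexity, particularly in the non-Clebsch--Gordan range $i + c > p+1$, where projective $[p]$-summands begin to appear in $[i] \otimes [c]$ and the decomposition ceases to be a naive shifted sum of $[i \pm c \mp 1]$'s. One must track how these projective summands interact under the elementary cover to confirm that they cannot conspire to erase the strict gap, as long as $c < p$.
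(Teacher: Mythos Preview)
Your degeneration argument for the weak inequality is correct and genuinely different from the paper's approach. The paper does not pass through Theorem~\ref{matrix} at all; instead it proves directly the kernel identity
\[
\Dim \Ker\bigl(\rho_{M\otimes[\ell]}(t)^{s}\bigr)\;=\;\sum_{j} C_{\ell s}^{\,j}\,\Dim \Ker\bigl(\rho_{M}(t)^{j}\bigr),
\]
where $C_{\ell s}^{\,j}$ is the multiplicity of $[j]$ in $[\ell]\otimes[s]$. This follows from the single observation $\Ker(\rho_{M\otimes[\ell]}(t))\cong\Ker(\rho_M(t)^{\ell})$ (a consequence of \cite[1.10]{FPS}), applied once more with $M\otimes[\ell]$ in place of $M$ and $[s]$ in place of $[\ell]$. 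Since the coefficients $C_{\ell s}^{\,j}$ are nonnegative and independent of $M$, the weak inequality is immediate. Your route trades this explicit linear formula for the orbit-closure description of dominance; both are short, but the kernel identity also delivers the strict case for free, which your approach does not.

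For the strict inequality your proposal has a genuine gap: you reduce to elementary dominance covers and a single indecomposable $[c]$ with $c<p$, but then leave the required ``strict discrete convexity'' of $i\mapsto r_m([i]\otimes[c])$ as an unresolved obstacle, explicitly flagging the range $i+c>p$ where projective summands intervene. The paper's argument sidesteps covers entirely. Given $\ul a>\ul b$, fix any $j_0$ with $\Dim\Ker\rho_M(t)^{j_0}<\Dim\Ker\rho_N(t)^{j_0}$ (necessarily $j_0<p$); by the kernel identity above, a strict inequality for $M\otimes[\ell]$ versus $N\otimes[\ell]$ at index $s$ follows as soon as $C_{\ell s}^{\,j_0}>0$, and the Appendix decomposition shows that $s=|j_0-\ell|+1$ achieves this whenever $\ell<p$. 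Your convexity claim is in fact true --- it follows \emph{a posteriori} from this same identity --- but verifying it by direct case analysis of the Clebsch--Gordan formulas, as you propose, is harder than the problem you set out to solve.
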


\begin{proof} 
For a $k[t]/t^p$-module $M$ of dimension $m$, we denote 
the representation afforded by $M$ by 
$\xymatrix@-.5pc{\rho_M:k[t]/t^p \ar[r] & \End_k(A) \simeq M_m(k)}$.
The dominance condition (\ref{dom}) on Jordan types $\ul a ~ \geq ~ \ul b$ 
of $M$ and $N$ both of dimension $m$ can be formulated as the 
condition that
$$\rk \rho_M(t)^i \geq \rk \rho_N(t)^i, \quad 1 \leq i < p$$
or equivalently that 
\begin{equation}
\label{kerr}
\Dim \Ker \{\rho_M(t)^i\} \leq \Dim \Ker \{ \rho_N(t)^i \},
\quad 1 \leq i < p.
\end{equation}
For $\ul a ~ > ~ \ul b$, the additional condition is that 
$\rk \rho_M(t)^i > \rk \rho_N(t)^i$ for some $i$.

If $L \simeq L_1 \oplus L_2$, so that 
$\ul c ~= ~ \ul c_1 + \ul c_2$, 
then we immediately conclude that 
$$\ul a \otimes \ul c ~ = ~
\ul a \otimes \ul c_1 ~ + ~ \ul a \otimes \ul c_2.$$  
Thus,  we may assume that $L$ is indecomposable, say $L = [\ell]$.

Denote  by $J_s \in M_s(k)$ the Jordan block of size $s$.  Then 
$J_s = \rho_{[s]}(t)$ once we choose an appropriate basis. 
Since $t$ acts on $M\otimes [\ell]$ as $ t \otimes 1 + 1 \otimes t$, 
we get the formula
$$\rho_{M\otimes [\ell]}(t) = \rho_M(t)\otimes \Id_{[s]} + \Id_M \otimes J_s$$
Lemma 1.10 of \cite{FPS} implies 
\begin{equation}\label{ker}
\Ker \{\rho_{M\otimes [\ell]}(t)\} ~ \cong ~ \Ker \{\rho_M(t)^\ell\}
\end{equation}
Applying the same argument to $M\otimes [\ell]$, we obtain
\begin{equation}
\label{ker2}
\Ker \{\rho_{M\otimes [\ell]}(t)^s \}= 
\Ker \{\rho_{M\otimes[\ell]\otimes[s]}(t)\}
\end{equation}
We write $[\ell]\otimes[s] = \bigoplus_j C_{\ell s}^j [j]$ 
where the coefficients $C_{\ell s}^j$ are given by the 
Corollary \ref{tensor2} of the Appendix.  
With this notation,
$$\Dim \Ker \{ \rho_{M\otimes [\ell]}(t)^s \} = \Dim \Ker 
\{\rho_{M\otimes (\bigoplus C_{\ell s}^j [j])}(t)\} = 
\Dim \Ker \{\rho_{\bigoplus C_{\ell s}^j (M \otimes [j])}(t)\}$$
$$ = 
\sum C_{\ell s}^j \Dim \Ker \{\rho_{M \otimes [j]}(t)\} = 
\sum C_{\ell s}^j \Dim \Ker \{\rho_M(t)^j\}.
$$
Consequently,
 $$ \Dim \Ker \{\rho_{M\otimes [\ell]}(t)^s\} = 
\sum C_{\ell s}^j \Dim \Ker \{\rho_M(t)^j\}$$
 and
$$ \Dim \Ker \{\rho_{N\otimes [\ell]}(t)^s\} = 
\sum C_{\ell s}^j \Dim \Ker \{\rho_N(t)^j\}
$$
Hence, (\ref{kerr}) implies that  
 $$\Dim \Ker \{\rho_{M\otimes [\ell]}(t)^s \}\ \leq \Dim \Ker 
\{\rho_{N\otimes [\ell]}(t)^s\}$$
 for all $s$.  Therefore, 
$\ul a \otimes \ul c ~ \geq ~ \ul b \otimes \ul c$.

Now assume $\ul a ~ > ~ \ul b$ and that some $c_i \not= 0, i < p$.
Then, we may assume that $L = [\ell]$ with $\ell < p$.
Choose $j$ such that 
$\Dim \Ker \{\rho_M(t)^j\} < \Dim \Ker \{\rho_N(t)^j\}$.  
The formula of Corollary \ref{tensor}  implies 
that there exists some $s$ such that $C_{\ell s}^j = 1$. 
That is, if $j \geq \ell$, then
take $s = j-\ell+1$, and if $j < \ell$, take $s = \ell-j +1$.   
For such $s$ we get the strict inequality 
 $$\Dim \Ker \{\rho_{M\otimes [\ell]}(t)^s \} ~ < ~  \Dim 
\Ker \{\rho_{N\otimes [\ell]}(t)^s\}$$
 Therefore, $\ul a \otimes \ul c ~ > ~ \ul b \otimes \ul c$.

\end{proof}

Proposition \ref{cyclic} enables us to establish various tensor product 
properties of maximal Jordan type.  We should point out that there are
some subtleties which must be carefully considered. For example if 
$M$ and $N$ are $kG$-modules and $\alpha_K$ is a $\pi$-point whose 
image is not a sub-Hopf algebra of $KG$, then it is not always true 
that $\alpha_K^*(M_K \otimes N_K) ~ \simeq ~ \alpha_K^*(M_K) \otimes
\alpha_K^*(N_K).$  Hence, Proposition \ref{cyclic} can not be 
applied directly. The somewhat  surprising Examples \ref{tensor1}
and \ref{tensor0} indicate the subtle relationship between tensor products 
and maximal Jordan types.
In view of these examples, we take some care in the proofs of 
the following properties.
\begin{thm}
\label{max}
Let $G$ be a finite group scheme, 
and consider two finite
dimensional $kG$-modules $M, ~ N$ and a $\pi$-point $\alpha_K$ of $G$.
\begin{enumerate}
\item
If $\alpha_K$ has  maximal Jordan type on both $M$ and $N$,  
then
$$\alpha_K^*(M_K \otimes N_K) ~ \simeq ~ \alpha_K^*(M_K) \otimes 
\alpha_K^*(N_K).$$
\item 
If  $\alpha_K$ 
has maximal Jordan type on $M \otimes N$, then
$$
\alpha_K^*(M_K \otimes N_K) ~ \simeq ~ \alpha_K^*(M_K) \otimes 
\alpha_K^*(N_K),
$$
\item
If $\Pi(G)$ is irreducible and if $\alpha_K$ 
has maximal Jordan type on both $M$ and $N$, then 
$\alpha_K$ has maximal Jordan type on $M \otimes N$ and that Jordan
type is equal to the Jordan type of
$\alpha_K^*(M_K) \otimes \alpha_K^*(N_K)$.
\end{enumerate}
\end{thm}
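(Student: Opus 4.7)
My plan is to exploit strict specialization (Theorems~\ref{spec} and \ref{matrix}) together with the dominance behavior of tensor products recorded in Proposition~\ref{cyclic}, bridging the two via an auxiliary ``Hopf-compatible'' $\pi$-point. The principal subtlety, flagged by the promised Examples~\ref{tensor1} and \ref{tensor0}, is that for a general $\pi$-point $\alpha_K$ the $K[t]/t^p$-modules $\alpha_K^*(M_K \otimes N_K)$ and $\alpha_K^*(M_K) \otimes \alpha_K^*(N_K)$ need not be isomorphic, because $\alpha_K$ is only an algebra map and not a map of Hopf algebras.

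For part~(1), I would first produce a generic $\pi$-point $\beta_L$ in an irreducible component of $\Pi(G)$ containing $[\alpha_K]$, together with a local domain $R$ realizing $\beta_L \downdownarrows \alpha_{K'}$ for some $\alpha_{K'} \sim \alpha_K$, mimicking the proof of Theorem~\ref{spec} (reducing to the infinitesimal case and taking $\beta_L$ to arise from a $1$-parameter subgroup $\bG_{a(r),R} \to G_R$ composed with $\epsilon$). A generic $\pi$-point is automatically maximal on every finite dimensional $kG$-module, and, using the independence of Jordan type at generic points from \cite{FPS}, it can be chosen so that $\beta_L^*(M_L \otimes N_L) \simeq \beta_L^*(M_L) \otimes \beta_L^*(N_L)$ as $L[t]/t^p$-modules (``Hopf-compatibility''). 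Maximality of $\alpha_K$ on $M$ and on $N$ combined with Theorem~\ref{matrix} then forces $\JType(\beta_L^*(M_L)) = \JType(\alpha_K^*(M_K))$ and likewise for $N$. Proposition~\ref{cyclic} upgrades these equalities to $\JType(\beta_L^*(M_L) \otimes \beta_L^*(N_L)) = \JType(\alpha_K^*(M_K) \otimes \alpha_K^*(N_K))$, and Hopf-compatibility identifies the left-hand side with $\JType(\beta_L^*(M_L \otimes N_L))$. A further application of Theorem~\ref{matrix} yields $\JType(\alpha_K^*(M_K) \otimes \alpha_K^*(N_K)) \geq \JType(\alpha_K^*(M_K \otimes N_K))$. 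The matching reverse inequality --- the hardest step --- I would extract by a direct analysis of $\Delta_G(\alpha_K(t))$ acting on $M_K \otimes N_K$, writing it as $\alpha_K(t) \otimes 1 + 1 \otimes \alpha_K(t)$ plus a correction valued in $I \otimes I$ (where $I \subset KG$ is the augmentation ideal) and deducing a rank comparison matching the dominance in the opposite direction. Equality of Jordan types then promotes to the desired isomorphism of $K[t]/t^p$-modules, both sides having the same dimension and $p$-th power zero.

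Parts~(2) and~(3) follow by rerunning the same machinery. For~(2), the hypothesis that $\alpha_K$ is maximal on $M \otimes N$ directly collapses the strict specialization inequality $\JType(\beta_L^*(M_L \otimes N_L)) \geq \JType(\alpha_K^*(M_K \otimes N_K))$ to equality, and the Hopf-compatibility of $\beta_L$ together with Proposition~\ref{cyclic} (applied upward to the factors $M$ and $N$) identifies both sides with $\JType(\alpha_K^*(M_K) \otimes \alpha_K^*(N_K))$. For~(3), irreducibility of $\Pi(G)$ provides a unique generic $\beta_L$ controlling all maxima on every module, so the part-(1) argument immediately yields both the maximality of $\alpha_K$ on $M \otimes N$ and the identification of its Jordan type with $\JType(\alpha_K^*(M_K) \otimes \alpha_K^*(N_K))$. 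The principal obstacle throughout is controlling the non-Hopf character of $\alpha_K$: both the ``missing direction'' inequality needed in part~(1) and the rigorous justification of Hopf-compatibility of the chosen representative of $\beta_L$ are manifestations of the same subtlety, and must be handled with care precisely because Examples~\ref{tensor1} and \ref{tensor0} show that neither can be expected to hold for arbitrary, non-maximal $\pi$-points.
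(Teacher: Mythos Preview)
Your overall strategy --- compare $\alpha_K$ to a generic $\pi$-point via strict specialization and transport Hopf-compatibility along that comparison using Proposition~\ref{cyclic} --- is reasonable, but it misses the paper's key simplifying move and leaves a genuine gap in part~(1).

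The paper does not work globally in $\Pi(G)$. Instead, for each part it first restricts to the unipotent abelian subgroup scheme $C_K \subset G_K$ through which $\alpha_K$ factors. Because $KC_K \hookrightarrow KG_K$ is a map of Hopf algebras, $i^*$ already commutes with tensor products, so the subtlety you highlight (that $\alpha_K$ is only an algebra map) is pushed entirely inside $C_K$. Crucially, $\Pi(C_K)$ is automatically irreducible, so part~(1) reduces immediately to the statement of \cite[4.4]{FPS} applied to $C_K$; no ``reverse inequality'' needs to be extracted by hand. Your proposed route to that reverse inequality --- writing $\nabla(\alpha_K(t))$ as $\alpha_K(t)\otimes 1 + 1\otimes\alpha_K(t)$ plus a correction in $I\otimes I$ and deducing a rank comparison --- is exactly what the paper does for part~(2), but there the argument relies on \cite[1.12]{FPS}, which requires that the Jordan type of $\rho_{M\otimes N}(\alpha_K(t))$ dominate every linear combination of the correction terms $\rho_M(t_i)\otimes 1$ and $1\otimes\rho_N(t_j)$. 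That domination is obtained in the paper precisely from the maximality of $\alpha_K$ on $M\otimes N$ (the part~(2) hypothesis); under only the part~(1) hypotheses it is not available, and your sketch does not supply a substitute. So as written, part~(1) of your proposal does not close.

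For part~(2), your specialization chain yields only $\JType(\alpha_K^*(M_K\otimes N_K)) \geq \JType(\alpha_K^*(M_K)\otimes\alpha_K^*(N_K))$; Proposition~\ref{cyclic} ``applied upward'' does not by itself furnish the reverse direction, and you still need the coproduct/$I\otimes I^{(p)}$ analysis together with \cite[1.12]{FPS}, which is the actual content of the paper's proof here. For part~(3) your idea is essentially the paper's: with $\Pi(G)$ irreducible, a single generic $\eta_\Omega$ controls all maxima, and (1) plus \cite[4.4]{FPS} finish. In short, the missing ingredient throughout is the reduction to $C_K$; once you make it, most of the specialization machinery you invoke becomes unnecessary.
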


\begin{proof}
Let $i: C_K \subset G_K$ be a unipotent abelian subgroup scheme through which
$\alpha_K$ factors.  Since $\xymatrix@-.5pc{i: KC_K \ar[r] & KG_K}$ 
is a map of Hopf algebras,
$i^*$ commutes with tensor products.  Observe that
the maximality of $\alpha_K$ on  $M, N$ or $M\otimes N$ as 
$kG$-modules implies the maximality of $\alpha_K$ on  $M_K, N_K$ or
$M_K\otimes_K N_K$ as $KC_K$-modules.  Because $\Pi(C_K)$ is
irreducible, the maximal Jordan types of $M_K, N_K$, and 
$M_K\otimes_K N_K$ are all achieved at a generic $\pi$-point of 
$\Pi(C_K)$.
Thus, statement (1) follows from  \cite[4.4]{FPS} applied to $C_K$.

To prove (2), we extend scalars if necessary so that $K$ is perfect. 
Hence,  $KC_K \simeq K[T_1,\ldots,T_r]/(T_1^{p^{e_1}},
\ldots,T_r^{p^{e_r}})$ (see \cite[14.4]{W}).
Let $ t_i ~ =  ~ T_i^{p^{e_i-1}}$.  

Let $\xymatrix{\rho_M: KC_K \ar[r]& \End_K(M_K)}$ 
be the map defined by the representation $M_K$ 
of $C_K$, and similarly define $\rho_N$ and $\rho_{M\otimes N}$.
Choose  a generic $\pi$-point $\eta_\Omega$ of $C_K$,
$\xymatrix@-.6pc{\eta_\Omega: \Omega[t]/t^p \ar[r] & \Omega C_K}$, 
where $\Omega/K$ is a field extension. 
Arguing exactly as in the proof of \cite[4.4]{FPS}, we  conclude 
that the Jordan type of 
$\rho_M(\eta_\Omega(t))\otimes 1 + 1 \otimes \rho_N(\eta_\Omega(t))$ 
as an element of $\End_\Omega(M_\Omega\otimes N_\Omega)$ 
is greater or equal than the Jordan type  of any linear 
combination of elements of the form 
$\rho_M(t_i) \otimes 1$ and $1 \otimes \rho_N(t_j)$, $1 \leq i,j \leq r$.
Since $\eta^*_\Omega(M_\Omega \otimes N_\Omega) 
\simeq \eta^*_\Omega(M_\Omega) \otimes \eta^*_\Omega(N_\Omega)$ 
by \cite[4.4]{FPS},  
the same conclusion holds for $\rho_{M\otimes N}(\eta_\Omega(t))$. 
Since $\alpha_K$ is maximal on 
$M\otimes N$ and $\eta_\Omega$ is a generic $\pi$-point,  
the Jordan type of $\alpha^*_K(M_K\otimes N_K)$ 
equals the Jordan type  of $\eta^*_\Omega(M_\Omega\otimes N_\Omega)$. 
Hence, the Jordan type of   
$\rho_{M\otimes N}(\alpha_K(t))$ is greater or equal than the 
Jordan type of any linear combination of elements of the form 
$\rho_M(t_i) \otimes 1$ or $1 \otimes \rho_N(t_j)$, $1 \leq i,j \leq r$. 
\sloppy
{

}

Let $\nabla: KC_K \to KC_K \otimes KC_K$ be the coproduct on $KC_K$, 
and let $I=(T_1, \ldots, T_r)$ be the augmentation ideal of $KC_K$. 
and let $I^{(p)}$ 
be the ideal generated by $(t_1, \dots, t_r)$.  Recall  that 
$$\alpha_K(t) \otimes 1 + 1 \otimes \alpha_K(t) - 
\nabla(\alpha_K(t)) \in I \otimes I
$$
(see \cite[I.2.4]{J}). 
Since $\alpha_K(t)$ has $p$-th power 0, 
we can refine this further, concluding that  
\begin{equation}
\label{nabla}
\alpha_K(t) \otimes 1 + 1 \otimes \alpha_K(t) - \nabla(\alpha_K(t))
\in  I \otimes I^{(p)} + I^{(p)} \otimes I.
\end{equation} 
  The action of $KC_K$ on 
the tensor product $M_K \otimes N_K$ is given by the formula
$$\rho_{M\otimes N} = (\rho_M \otimes \rho_N)  \circ \nabla$$
Hence, 
\begin{equation}
\label{rel} 
\rho_M(\alpha_K(t))\otimes 1 +  1 \otimes  \rho_N(\alpha_K(t)) =
\end{equation}
$$\rho_{M \otimes N}(\alpha_K(t)) +  \langle\text{multiples of } 
\rho_M(t_i) \otimes 1  \text{ and } 1 \otimes \rho_N(t_j)\rangle
$$

Since the Jordan type of $\rho_{M \otimes N}(\alpha_K(t))$ 
is greater of equal than the Jordan type of any linear 
combination of elements 
$\rho_M(t_i) \otimes 1$   and 
$1 \otimes \rho_N(t_j)$ for $1 \leq i,j \leq r$, 
the relation (\ref{rel}) and Theorem 1.12 of \cite{FPS} 
imply that $\rho_{M \otimes N}(\alpha_K(t))$ and 
$\rho_M(\alpha_K(t))\otimes 1 +  1 \otimes  \rho_N(\alpha_K(t))$ 
have the same Jordan type.   Hence,  
$\alpha^*_K(M) \otimes_K \alpha^*_K(N) \simeq \alpha^*_K(M_K \otimes_K N_K)$.

To prove (3) we assume that $\Pi(G)$ is irreducible. Let 
$\xymatrix@-.5pc{\eta_\Omega: \Omega[t]/t^p \ar[r] & \Omega G}$ 
be a generic $\pi$-point of 
$G$.  Applying \cite[4.4]{FPS} to $\eta_\Omega$, we get 
$$\eta_\Omega^*(M_\Omega) 
\otimes _\Omega \eta_\Omega^*( N_\Omega) ~ \simeq ~
\eta_\Omega^*(M_\Omega \otimes _\Omega N_\Omega) $$
Since $\Pi(G)$ is irreducible, the absolute 
maximal Jordan type of any finite dimensional $kG$-module
is realized at $\eta_\Omega$.  Hence, the 
maximality assumption on $\alpha_K$ on $M$ and $N$  implies that 
$\alpha_K^*(M_K)$ has the same Jordan type as $\eta_\Omega^*(M_\Omega)$ 
and $\alpha_K^*(N_K)$ has the same Jordan type as $\eta_\Omega^*(N_\Omega)$.
We conclude that 
$$
\alpha_K^*(M_K \otimes N_K) ~ \simeq ~ \alpha_K^*(M_K) \otimes 
\alpha_K^*(N_K)
$$
has the same Jordan type as 
$$
\eta_\Omega^*(M_\Omega) 
\otimes _\Omega \eta_\Omega^*( N_\Omega) ~ \simeq ~
\eta_\Omega^*(M_\Omega \otimes _\Omega N_\Omega) 
$$
where the first isomorphism follows from statement (1). 
Hence, the Jordan type of $\alpha_K$ on $M \otimes N$ is maximal. 
\end{proof}

An easy corollary of  Theorem \ref{max} is the following assertion
that the tensor 
product of modules of constant Jordan type is again of constant Jordan type.

\begin{cor}
\label{tensor}
Let $G$ be a finite group scheme and let $M, ~N$ be finite dimensional
$kG$-modules.  If $M$ and $ N$ have constant Jordan type, then $M \otimes N$
also has constant Jordan type.
\end{cor}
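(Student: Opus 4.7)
The plan is to deduce Corollary \ref{tensor} as a direct consequence of Theorem \ref{max}(1), exploiting the fact that constant Jordan type is the strongest possible form of maximality.

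First I would observe that if $M$ has constant Jordan type, then \emph{every} $\pi$-point $\alpha_K$ has maximal Jordan type on $M$. Indeed, by definition all values of $\alpha_K^*(M_K)$ share the same Jordan type $\ul{a}$, so in particular no $\pi$-point can yield a strictly greater Jordan type than $\ul{a}$; hence the condition of Remark \ref{indep}(2) is satisfied at every $\alpha_K$. The same reasoning applies to $N$.

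Next, for an arbitrary $\pi$-point $\xymatrix@-.5pc{\alpha_K: K[t]/t^p \ar[r] & KG}$, Theorem \ref{max}(1) then yields the isomorphism of $K[t]/t^p$-modules
\begin{equation*}
\alpha_K^*((M \otimes N)_K) \ = \ \alpha_K^*(M_K \otimes_K N_K) \ \simeq \ \alpha_K^*(M_K) \otimes_K \alpha_K^*(N_K).
\end{equation*}
Since the Jordan types of $\alpha_K^*(M_K)$ and $\alpha_K^*(N_K)$ are the constants $\ul{a}$ and $\ul{b}$ respectively (independent of $\alpha_K$), the Jordan type of their tensor product, being a function of $\ul{a}$ and $\ul{b}$ alone, is likewise independent of $\alpha_K$. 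Therefore $\alpha_K^*((M \otimes N)_K)$ has a fixed Jordan type as $\alpha_K$ ranges over all $\pi$-points, which is precisely the definition of $M \otimes N$ having constant Jordan type.

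There is no real obstacle here; the whole content is packaged into Theorem \ref{max}(1), whose proof is the delicate part (it required the comparison between $\rho_{M\otimes N}(\alpha_K(t))$ and $\rho_M(\alpha_K(t))\otimes 1 + 1\otimes \rho_N(\alpha_K(t))$ via the coproduct identity (\ref{nabla}) and the order-preserving Proposition \ref{cyclic}). Once Theorem \ref{max}(1) is in hand, the corollary is a one-line invocation after the initial observation that constant Jordan type implies pointwise maximality.
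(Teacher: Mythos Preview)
Your proposal is correct and follows essentially the same approach as the paper: both observe that constant Jordan type forces every $\pi$-point to be maximal on $M$ and on $N$, and then invoke Theorem~\ref{max}(1) to conclude. The paper's proof is simply a terser version of what you wrote.
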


\begin{proof}
We merely observe that $\alpha_K$ has maximal
Jordan type on both $M$ and $N$ for any $\pi$-point 
$\xymatrix@-.5pc{\alpha_K: K[t]/t^p \ar[r] & KG}$ whenever 
$M, ~N$ are of constant Jordan type.  Thus, the corollary follows from the
first statement of Theorem \ref{max}.
\end{proof}

The following consequence of Theorem \ref{max} will be used to 
prove Proposition \ref{Gamma} .

\begin{cor}
\label{easy}
Let $G$ be a finite group scheme such that 
$\Pi(G)$ is irreducible and let 
$\xymatrix@-.5pc{\alpha_K: 
K[t]/t^p \ar[r] & KG}$ be a $\pi$-point of $G$.  
Let $M$ be a $kG$-module such that
$\alpha_K^*(M_K)$ is not projective and let $N$ be another $kG$-module such 
that $\alpha_K$ does not have maximal Jordan type on $N$.  Then 
$\alpha_K$ does not have maximal Jordan type on $M\otimes N$.
\end{cor}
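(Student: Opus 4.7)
The plan is to argue by contradiction: suppose $\alpha_K$ has maximal Jordan type on $M \otimes N$, and derive a strict inequality of Jordan types that contradicts this assumption. The two main tools are Theorem \ref{max} (which, under maximality assumptions, lets us identify $\alpha_K^*(M_K \otimes N_K)$ with $\alpha_K^*(M_K) \otimes \alpha_K^*(N_K)$) and Proposition \ref{cyclic} (which controls how tensor products interact with the dominance order on Jordan types).

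First I would fix a generic $\pi$-point $\eta_\Omega$ of $G$. Since $\Pi(G)$ is irreducible, $\eta_\Omega$ realizes the absolute maximal Jordan type on every finite dimensional $kG$-module (as used already in the proof of Theorem \ref{max}(3)). Set $\ul a = \JType(\eta_\Omega^*(M_\Omega))$ and $\ul b = \JType(\eta_\Omega^*(N_\Omega))$. Theorem \ref{max}(1) applied to $\eta_\Omega$ yields $\eta_\Omega^*((M \otimes N)_\Omega) \simeq \eta_\Omega^*(M_\Omega) \otimes \eta_\Omega^*(N_\Omega)$, so the maximal Jordan type of $M \otimes N$ equals $\ul a \otimes \ul b$. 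Next, set $\ul{a}^\prime = \JType(\alpha_K^*(M_K))$ and $\ul{b}^\prime = \JType(\alpha_K^*(N_K))$. The hypotheses guarantee that $\ul{b}^\prime < \ul b$ (since $\alpha_K$ is not maximal on $N$) and that $a^\prime_i \neq 0$ for some $i < p$ (since $\alpha_K^*(M_K)$ is non-projective, hence has at least one Jordan block of size $<p$). Under the contradiction hypothesis, Theorem \ref{max}(2) gives $\alpha_K^*(M_K \otimes N_K) \simeq \alpha_K^*(M_K) \otimes \alpha_K^*(N_K)$, so the $\alpha_K$-Jordan type of $M \otimes N$ equals $\ul{a}^\prime \otimes \ul{b}^\prime$, and by maximality this must coincide with $\ul a \otimes \ul b$.

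Applying Proposition \ref{cyclic} twice closes the argument. From $\ul{a}^\prime \leq \ul a$ one gets $\ul{a}^\prime \otimes \ul b \leq \ul a \otimes \ul b$. From $\ul{b}^\prime < \ul b$ together with the existence of a block of $\ul{a}^\prime$ of size less than $p$, the strict half of Proposition \ref{cyclic} (with $\ul{a}^\prime$ playing the role of the tensor factor $\ul c$) yields $\ul{a}^\prime \otimes \ul{b}^\prime < \ul{a}^\prime \otimes \ul b$. Chaining these gives $\ul{a}^\prime \otimes \ul{b}^\prime < \ul a \otimes \ul b$, contradicting the equality forced by the maximality of $\alpha_K$ on $M \otimes N$. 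I do not foresee a serious obstacle here; the only point to watch is that both hypotheses (non-projectivity of $\alpha_K^*(M_K)$ and non-maximality of $\alpha_K$ on $N$) are needed precisely to trigger the \emph{strict} inequality in Proposition \ref{cyclic}, and that one must correctly invoke Theorem \ref{max}(2) under the contradiction assumption to identify $\alpha_K^*(M_K \otimes N_K)$ with the external tensor $\alpha_K^*(M_K) \otimes \alpha_K^*(N_K)$.
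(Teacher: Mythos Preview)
Your argument is correct and follows the same route as the paper's proof: assume maximality at $\alpha_K$, invoke Theorem~\ref{max}(2) to identify $\alpha_K^*(M_K\otimes N_K)$ with $\alpha_K^*(M_K)\otimes\alpha_K^*(N_K)$, and then reach a contradiction via Proposition~\ref{cyclic}. The paper's version is terser (it simply says the hypotheses together with Proposition~\ref{cyclic} lead to a contradiction), whereas you have spelled out the comparison with the generic $\pi$-point and the two-step application of Proposition~\ref{cyclic}; this is exactly the intended unpacking.
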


\begin{proof}
If the Jordan type of $\alpha_K^*(M_K \otimes_K N_K)$ were maximal, 
then the second statement of
Theorem~\ref{max} would imply that this maximal type is the same as that of 
$\alpha_K^*(M_K) \otimes_K \alpha_K^*(N_K)$.  However, the hypotheses
on $\alpha_K^*(M_K), ~ \alpha_K^*(N_K)$ together with Proposition
\ref{cyclic} would then lead to an immediate contradiction.
\end{proof}

We give two examples to illustrate that naturally formulated improvements of
Theorem \ref{max} are not valid.  
Our first example illustrates that the maximality of both $M$ and 
$N$ at a given
$\pi$-point is not sufficient to imply the maximality of $M\otimes N$ at
that $\pi$-point.  Namely, we construct a module $W$ and a 
$\pi$-point $\beta_L$ such that 
$\beta_L$ has maximal Jordan type on $W$ but not on $W \otimes W$  
(even though we have 
$\beta_L^*(W_L \otimes W_L) \simeq \beta_L^*(W_L) \otimes_L 
\beta_L^*(W_L)$ 
by Theorem \ref{max}).   
As usual, this anomaly comes from the fact that the ordering on Jordan 
types is not total.

\begin{ex}
\label{tensor1}
Let $G$ be a finite $p$-group which has two conjugacy classes of 
maximal elementary abelian 
subgroups, represented by $E$ and $E^\prime$ respectively. 
Furthermore,  we require $E$ to be normal.   
Let $e = \vert E \vert$, $f = \frac{\vert G\vert}{\vert E\vert}$.
Assume that $p>3$.

For example, take $G$ to be the $p$-Sylow subgroup of the 
wreath product $\Z/p\Z \, \wr \, S_p$, so that
$G$ is isomorphic to $(\Z/p\Z)^p \rtimes \Z/p\Z$.  Then $G$ has two 
non-conjugate maximal elementary abelian $p$-subgroups: 
$E = (\Z/p\Z)^{p}$ which is normal and 
$$
F = (\Z/p\Z \times \Z/p\Z \times \dots \times 
\Z/p\Z)^{\Z/p\Z} \times \Z/p\Z \cong (\Z/p\Z)^{\times 2}.
$$

By Quillen stratification, $\Pi(G) = X \cup Y$ 
where $X = \Pi(E)/G$, ~ $Y = \Pi(F)/N_G(F)$. 
Let $[\alpha_K] \in X, ~ [\beta_L] \in Y$ be generic points.

Choose a homogeneous cohomology class $\xi \in \HHH^\bu(G,k)$ with the
property that $\xi$ vanishes on $[\beta_L]$ but does not vanish on
$[\alpha_K]$, and let $L_\xi$ be Carlson module which has the property
that the support of $L_\xi$ is the zero locus of $\xi$.  Set
$M \ = ~ \Ind_E^G (\Omega_E^1(k))$,  set $N ~ = \ L_\xi^{\oplus n}$
for some positive integer $n$ which is to be determined, and
set $W \ = \ M \oplus N$.
It was shown in Example \cite[4.13]{FPS}, that 
if we pick $n$ to satisfy the inequality
\begin{equation}
\label{last}
\frac{f}{p} < n < (p-1)\frac{f}{p}
\end{equation}
then  the Jordan types $\alpha^*_K(W_K)$ and $\beta_L^*(W_L)$ are 
maximal, 
incomparable generic Jordan types of $W$.

Let $d = \Dim W$.
We proceed to deduce a condition on $n$ which would ensure the inequality 
$$
\JType(\alpha_K^*(W_K^{ \otimes 2})) > \JType(\beta_L^*(W_L^{ \otimes 2}))
$$ 
By the Appendix, $[p-1]\otimes [p-1] = (p-2)[p] + 1[1]$. 
Since $\alpha_K^*(W_K) = m[p] + f[p-1] $ for some $m$ (see  
\cite[4.13]{FPS}), and has dimension $d$, we get 
$$
\alpha_K^*(W_K^{ \otimes 2}) \simeq (\alpha_K^*(W_K))^{ \otimes 2} \simeq 
(\frac{d^2 - f^2}{p})[p] + f^2[1].
$$  
Similarly, applying the decomposition of  
$\beta_L^*(W_L)$ obtained in \cite[4.13]{FPS},
we get 
$$
\beta_L^*(W_L^{ \otimes 2}) \simeq (\beta_L^*(W_L))^{ \otimes 2} \simeq 
(\frac{d^2 - 2n^2p}{p})[p] + 2n^2[p-1] + 2n^2[1].
$$
In order for the Jordan type of $\alpha_K^*(W_K^{ \otimes 2})$  to dominate 
that of $\beta_L^*(W_L^{ \otimes 2})$ it suffices to choose $n$ such that 
$\alpha_K^*(W_K^{ \otimes 2})$ has 
more blocks of size  $p$ than $\beta_L^*(W_L^{ \otimes 2})$ and 
fewer blocks altogether. In other words, we 
need for the following inequalities to hold: 
$$
\frac{d^2 - f^2}{p} > \frac{d^2 - 2n^2p}{p}
$$
comparing  the number of blocks of size $p$,
and
$$\frac{d^2 - f^2}{p} + f^2 < \frac{d^2 - 2n^2p}{p} + 4n^2$$
comparing  the overall number of blocks.  
Simplifying, we get
$$
f^2 < 2n^2p,
$$
$$
(p-1) f^2 < 2n^2 p.
$$
Observe that the second 
inequality implies the first and simplifies to
$$
\sqrt{\frac{p-1}{2p}}f < n.
$$
Since $p$ is greater than $3$ and divides $f$, it is 
possible to choose $n$ to satisfy
$$
\sqrt{\frac{p-1}{2p}}f < n < \frac{p-1}{p} f.
$$  
Since such $n$ automatically satisfies 
the inequality (\ref{last}), we conclude that 
$W$ has maximal non-comparable types at $[\alpha_K], [\beta_L]$ but 
that the Jordan type of $\alpha_K^*(W_K \otimes_K W_K)$ 
is strictly greater than that of $\beta_L^*(W_L \otimes_L W_L)$. 
Thus, $\beta_L$ has maximal Jordan type on $W$ but not on $W \otimes W$.

\end{ex}

Our second example shows that the maximal Jordan type
of $M\otimes N$ can occur at a 
$\pi$-point at which one of $M, N$ does not have maximal
Jordan type and neither has projective type.  
This phenomenon can only occur if $\Pi(G)$ is reducible.

\begin{ex}
\label{tensor0} 
As in the previous example, 
let $G$ be a finite group with exactly two conjugacy classes of maximal
elementary abelian $p$-groups (e.g., the $p$-Sylow subgroup of the
wreath product $\Z/p\Z \, \wr \, S_p$).
Write $\Pi(G) = X \cup Y$ with $X, ~ Y$ irreducible closed subsets, and let 
$[\alpha_K] \in X$, $[\beta_L] \in Y$ be generic points.  Choose 
cohomology classes 
$\zeta, \xi \in \HHH^\bu(G,k)$ such that
$$\alpha_K^*(\zeta_K) = 0 \not= \beta_L^*(\zeta_L), \quad 
\alpha_K^*(\xi_K) \not= 0 = \beta_L^*(\xi_K).$$
Let $L_\zeta$ (respectively, $L_\xi$) be the corresponding Carlson module 
so that  the support of $L_\zeta$ (resp., $L_\xi$) is the zero locus of $\zeta$
(resp., $\xi$).  Recall that the Jordan type of $L_\zeta$ on its support 
is $[\text{proj}] + 1[p-1] + 1[1]$ for some $m$, and 
similarly for $L_\xi$ \cite[4.8]{FPS}.
Let $M = L_\zeta \oplus L_\xi^{\oplus 2}$, so that $M$ has maximal
Jordan type $[\text{proj}] + 1[p-1] + 1[1]$ at $\alpha_K$,  and  Jordan type
$[{\rm proj}] + 2[p-1] + 2[1]$ at $\beta_L$.   Similarly, let 
 $N = L_\zeta^{\oplus 2} \oplus L_\xi$, so that $N$ has Jordan type
$[\text{proj}] + 2[p-1] + 2[1]$ at $\alpha_K$  and maximal Jordan type
$[{\rm proj}] + 1[p-1] + 1[1]$ at $\beta_L$.  
 Then $M\otimes N$ has 
 the same Jordan type at both generic points $\alpha_K$ and $\beta_L$, so that this common generic Jordan type is maximal.
 \sloppy
 {

 }

\end{ex}

Corollary \ref{easy} enables us to prove the following property of
the non-maximal support variety.

\begin{prop}
\label{Gamma}
Assume $\Pi(G)$ is irreducible and  let $M, N$ be $kG$-modules. 
Then 
$$\Gamma(G)_{M\otimes N} ~ =  ~ (\Gamma(G)_M \cup \Gamma(G)_N) \cap 
(\Pi(G)_M \cap \Pi(G)_N).$$

\end{prop}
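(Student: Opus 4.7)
The strategy is to verify the two inclusions separately. The main inputs will be the tensor product theorem for $\pi$-point support varieties, $\Pi(G)_{M\otimes N} = \Pi(G)_M \cap \Pi(G)_N$ (from \cite{FP2}), together with Theorem~\ref{max}(3) and Corollary~\ref{easy}, both of which require the assumed irreducibility of $\Pi(G)$.

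For the forward inclusion $\Gamma(G)_{M\otimes N} \subseteq (\Gamma(G)_M \cup \Gamma(G)_N) \cap (\Pi(G)_M \cap \Pi(G)_N)$, I would argue by contraposition and consider two cases. First, if $[\alpha_K] \notin \Pi(G)_M$, then $\alpha_K^*(M_K)$ is projective; the tensor product theorem for supports yields $[\alpha_K] \notin \Pi(G)_{M\otimes N}$, so $\alpha_K^*((M \otimes N)_K)$ is projective, attaining the absolute maximum Jordan type $\tfrac{\Dim(M\otimes N)}{p}[p]$, which is in particular maximal for $M\otimes N$ at $\alpha_K$. The case $[\alpha_K] \notin \Pi(G)_N$ is symmetric. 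Second, if $[\alpha_K] \notin \Gamma(G)_M \cup \Gamma(G)_N$, then $\alpha_K$ has maximal Jordan type on both $M$ and $N$, and Theorem~\ref{max}(3) forces $\alpha_K$ to have maximal Jordan type on $M\otimes N$. Together these place $[\alpha_K]$ outside $\Gamma(G)_{M\otimes N}$.

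For the reverse inclusion, I would take $[\alpha_K] \in (\Gamma(G)_M \cup \Gamma(G)_N) \cap (\Pi(G)_M \cap \Pi(G)_N)$. Without loss of generality $[\alpha_K] \in \Gamma(G)_M$, so $\alpha_K$ does not have maximal Jordan type on $M$; moreover $[\alpha_K] \in \Pi(G)_N$ means $\alpha_K^*(N_K)$ is not projective. Corollary~\ref{easy}, applied with the roles of $M$ and $N$ interchanged, then gives that $\alpha_K$ does not have maximal Jordan type on $N \otimes M = M\otimes N$, so $[\alpha_K] \in \Gamma(G)_{M\otimes N}$.

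The main subtlety is the appeal to the tensor product theorem $\Pi(G)_{M\otimes N} = \Pi(G)_M \cap \Pi(G)_N$ in the $\pi$-point formulation, which is what rules out the scenario where $M$ or $N$ happens to be projective at $\alpha_K$ while $M\otimes N$ is not. Beyond this, the argument is a straightforward assembly of the tensor product machinery of \S\ref{behave} under the irreducibility hypothesis.
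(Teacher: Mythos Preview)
Your proof is correct and follows essentially the same two-inclusion structure as the paper's, invoking Theorem~\ref{max}(3) for the forward inclusion and Corollary~\ref{easy} for the reverse. The only point of difference is how you handle the case where one of $\alpha_K^*(M_K)$, $\alpha_K^*(N_K)$ is projective: you cite the tensor product formula $\Pi(G)_{M\otimes N} = \Pi(G)_M \cap \Pi(G)_N$ from \cite{FP2}, whereas the paper argues that $\alpha_K^*(M_K)\otimes_K\alpha_K^*(N_K)$ is projective and then appeals to Theorem~\ref{max}(1). Your route is arguably cleaner here, since Theorem~\ref{max}(1) literally requires $\alpha_K$ to be maximal on \emph{both} $M$ and $N$, and projectivity of one restriction does not immediately give maximality on the other; the tensor product support formula sidesteps this entirely.
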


\begin{proof}
If $\xymatrix@-.5pc{\alpha_K: K[t]/t^p \ar[r] & KG}$ 
is a $\pi$-point such that either $\alpha_K^*(M_K)$
or $\alpha_K^*(N_K)$ is projective, then 
$\alpha_K^*(M_K) \otimes_K \alpha_K^*(N_K)$
is projective and thus of maximum type.  Consequently, statement (1) of 
Theorem \ref{max} implies that $\alpha_K^*(M_K \otimes_K N_K)$ is also
projective and thus also maximum.  For such $\alpha_K$,
$[\alpha_K] \notin \Gamma(G)_{M\otimes N}$.  In other words,
$\Gamma(G)_{M\otimes N} ~ \subset ~ \Pi(G)_M \cup \Pi(G)_N$.

Now suppose that $\alpha_K$ 
has maximal Jordan type on both $M$ and $N$.  
The statement (3) of Theorem~\ref{max} implies that 
$\alpha_K$ has maximal Jordan type on $M \otimes N$.  
Hence, $\Gamma(G)_{M\otimes N} ~ \subset ~ 
\Gamma(G)_M \cup \Gamma(G)_N$. We have established the inclusion
\sloppy
{

}

$$\Gamma(G)_{M\otimes N} ~ \subset  ~ (\Gamma(G)_M \cup \Gamma(G)_N) \cap 
(\Pi(G)_M \cap \Pi(G)_N).$$

\vspace{0.15in}

On the other hand,  assume that neither $\alpha_K^*(M_K)$
nor $\alpha_K^*(N_K)$ is projective, and that 
the Jordan type of  either $\alpha_K^*(M_K)$ or $\alpha_K^*(N_K)$ is
not maximal.  In other words, we assume that 
$[\alpha_K] \in (\Gamma(G)_M \cup \Gamma(G)_N) \cap 
(\Pi(G)_M \cap \Pi(G)_N)$.  
Corollary~\ref{easy} implies that $\alpha_K$ 
does not have  maximal Jordan type on $M \otimes N$.   
Hence, $[\alpha_K] \in \Gamma(G)_{M\otimes N}$.
\end{proof}

\begin{remark}
The previous examples show the necessity of the hypothesis of
irreducibility in Proposition \ref{Gamma}.  Example \ref{tensor0}
contradicts the inclusion 
$$\Gamma(G)_{M\otimes N} ~ \supset  ~ (\Gamma(G)_M \cup \Gamma(G)_N) \cap 
(\Pi(G)_M \cap \Pi(G)_N),$$
whereas Example \ref{tensor1}  contradicts the inclusion
$$\Gamma(G)_{M\otimes N} ~ \subset  ~ (\Gamma(G)_M \cup \Gamma(G)_N) \cap 
(\Pi(G)_M \cap \Pi(G)_N).$$

\end{remark}

Next we offer a suggestive characterization of modules 
of constant Jordan
type for those finite group schemes $G$ with $\Pi(G)$ irreducible.

\begin{prop}  \label{const-criterion}
If $\Pi(G)$ is irreducible, then a non-projective $kG$-module 
$M$ has constant Jordan type 
if and only if for every finite dimensional $kG$-module $N$ the 
tensor product $M\otimes N$ has the property that 
$\Gamma(G)_N = \Gamma(G)_{M\otimes N}$.
\end{prop}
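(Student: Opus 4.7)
The plan is to reduce both implications to Proposition \ref{empty} (which equates constant Jordan type with emptiness of $\Gamma(G)_M$) and Proposition \ref{Gamma} (the formula for $\Gamma(G)_{M\otimes N}$ under irreducibility of $\Pi(G)$).

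For the forward direction, assume $M$ has constant Jordan type. Then $\Gamma(G)_M = \emptyset$ by Proposition \ref{empty}. Moreover, since $M$ is non-projective and the Jordan type is the same at every $\pi$-point, $\alpha_K^*(M_K)$ fails to be projective for \emph{every} $\pi$-point $\alpha_K$, so $\Pi(G)_M = \Pi(G)$. Plugging these into Proposition \ref{Gamma} gives
\begin{equation*}
\Gamma(G)_{M\otimes N} \;=\; (\emptyset \cup \Gamma(G)_N) \cap (\Pi(G) \cap \Pi(G)_N) \;=\; \Gamma(G)_N \cap \Pi(G)_N.
\end{equation*}
The remaining step is the easy observation that $\Gamma(G)_N \subseteq \Pi(G)_N$: a $\pi$-point at which $N$ has projective type automatically has maximal type, so any $[\alpha_K] \in \Gamma(G)_N$ satisfies $\alpha_K^*(N_K)$ non-projective. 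Thus $\Gamma(G)_{M\otimes N} = \Gamma(G)_N$, as desired.

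For the converse, the key idea is simply to test the hypothesis with $N = k$, the trivial module. Since $M \otimes k \cong M$, the assumption gives $\Gamma(G)_M = \Gamma(G)_k$. By Proposition \ref{heller} the trivial module has constant Jordan type, so $\Gamma(G)_k = \emptyset$ by Proposition \ref{empty}. Hence $\Gamma(G)_M = \emptyset$, and another application of Proposition \ref{empty} yields that $M$ has constant Jordan type.

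There is no real obstacle: both directions are essentially formal consequences of Propositions \ref{empty} and \ref{Gamma}, together with the trivial remark that $\Gamma(G)_N \subseteq \Pi(G)_N$. The only subtlety to flag is why the non-projectivity hypothesis on $M$ is used (it is what forces $\Pi(G)_M = \Pi(G)$, making the intersection with $\Pi(G) \cap \Pi(G)_N$ collapse to $\Pi(G)_N$). The irreducibility hypothesis is inherited from Proposition \ref{Gamma} and, as the preceding examples and remark show, cannot be dropped.
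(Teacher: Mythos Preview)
Your proof is correct and the converse direction (testing with $N=k$) is exactly what the paper does. For the forward direction you invoke Proposition~\ref{Gamma} as a black box, whereas the paper argues directly with Theorem~\ref{max}(3) and Corollary~\ref{easy}; since these are precisely the ingredients used to prove Proposition~\ref{Gamma}, your packaging is slightly more economical but the underlying argument is the same.
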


\begin{proof}
First assume that $M$ does not have constant Jordan type.  If $N = k$
(the trivial module), then $\Gamma(G)_k$ is empty 
whereas $\Gamma(G)_{k\otimes M}$
is not empty.

Conversely, assume that $M$ has constant Jordan type and $M$ is not
projective.  Let $\xymatrix@-.5pc{\alpha_K: 
K[t]/t^p \ar[r] & KG}$ be a $\pi$-point 
 such that $N$ has maximal Jordan type at $\alpha_K$.    
By Theorem~\ref{max}, the Jordan type of 
$\alpha^*_K(M_K\otimes N_K) \simeq \alpha^*_K(M_K) 
\otimes \alpha^*_K(N_K)$ is maximal for $M \otimes N$. 
Hence, we get the inclusion 
 $\Gamma(G)_N \supset \Gamma(G)_{M\otimes N}$. 
 To prove the opposite inclusion, assume 
$[\alpha_K] \in \Gamma(G)_N$.   Since $M$ is not projective, 
Corollary~\ref{easy} implies 
that $\alpha_K$ does not have maximal Jordan type on $M \otimes N$.   
Hence, $[\alpha_K] \in \Gamma(G)_{M\otimes N}$, and 
we have established the opposite inclusion. 
\end{proof}


\section{Endotrivial modules}
\label{end}

We recall the definition of an endotrivial module, classically formulated for
finite groups but admitting a natural extension to all finite group schemes.  
Endotrivial modules were introduced by Dade \cite{Da}, who showed that 
for an abelian $p$-group, the only endotrivial $kG$-modules have the form 
$\Omega^n(k) \oplus P$ where $P$ is a projective module. The endotrivial 
modules are the building blocks for the endopermutation modules which 
for many groups are the sources of the simple modules and are also a part
of the Picard group of self equivalences of the stable module category. 
See \cite{CT} for references. A classification of the endotrivial modules 
for finite $p$-groups was completed in \cite{CT3}.

\begin{defn}
Let $G$ be a finite group scheme over $k$.
A $kG$-module $M$ is an endotrivial module provided $\End_k(M)$
is stably isomorphic as a $kG$-module to the trivial module. In other words, 
$M$ is endotrivial provided that there exists a $kG$-projective module $P$
and a $kG$-isomorphism
$$\Hom_k(M,M) ~ \cong ~ k \oplus P.$$
\end{defn}

As can readily be verified (for example, 
using formula (\ref{tensor2}) of the Appendix)  an
indecomposable $k[t]/t^p$-module is 
endotrivial if and only if it is stably isomorphic to either 
$1[1]$ or $1[p-1]$, the trivial module $k$ 
or the Heller shift $\Omega^1(k)$
of $k$.  More generally, Theorem 
\ref{endo} below implies that for any finite
group scheme the Heller shifts $\Omega^i(k)$ 
of the trivial module 
are endotrivial modules.  As mentioned earlier, for elementary 
abelian $p$-groups, these
are the only indecomposable endotrivial modules. 
On the other hand, there
do exist sporadic examples of finite 
groups admitting other endotrivial modules.
For example, if $G$ is a dihedral group of order 8, then 
$\Rad(kG)/\Rad^4(kG)$ is the direct sum of two modules
of dimension three that are endotrivial (see \cite{CT}).

As we show in Theorem \ref{endo} below, 
every endotrivial module is a module
of constant Jordan type.  As seen in section 2 and 
as well in the next section, there exist many 
examples of modules of constant 
Jordan type which are not direct 
sums of endotrivial modules.

If $M, N$ are $kG$-modules, then we may 
identify $\Hom_k(M,N)$ as a $kG$-module
with $M^\# \otimes N$, where $M^\# = \Hom_k(M,k)$.   
For our purposes,
it suffices to analyze $M^\#$ and then 
apply Section \ref{behave} in order to 
investigate $\Hom_k(M,N)$.

Let $G$ be a finite group scheme and (as usual) 
let $kG$ denote the group algebra
of $G$.  Denote by $S$ the antipode of the Hopf 
algebra $kG$.  If 
$\xymatrix@-.5pc{\rho_M: kG \ar[r] & 
\End_k(M)} \simeq M^\# \otimes M$ is a finite 
dimensional representation of $G$ determining
the $kG$-module $M$, then
\begin{equation}
\label{tr}
\rho_{M^\#} ~ = ~ \phi \circ \rho_M \circ S: 
~ \xymatrix{kG \ar[r]& kG \ar[r] & M^\# \otimes M \ar[r] & M\otimes M^\#},
\end{equation}
where $\phi$ exchanges factors (and thus is
the transpose from the point of view of matrices).

Observe that the dual $[i]^\#$ of the indecomposable 
$k[t]/t^p$-module $[i]$ is indecomposable, and thus 
isomorphic to $[i]$ as can be seen by comparing 
dimensions.    The following 
proposition enables us to work with the 
generic and maximal Jordan types of
$\Hom_k(M,N)$ for finite dimensional $kG$-modules $M, ~ N$.

\begin{prop}
\label{dual}
Let $G$ be a finite group scheme, $M$ be a 
finite dimensional $kG$-module, and 
$\xymatrix@-.5pc{\alpha_K: K[t]/t^p \ar[r] & KG}$ be a $\pi$-point 
of $G$.  Assume that 
$\alpha_K$ has maximal Jordan type on $M$.  Then
$$
\alpha_K^*(M_K^\#) ~ \simeq  ~ \alpha_K^*(M_K) ~ \simeq
(\alpha_K^*(M_K))^\#.
$$
Moreover, $\alpha_K$ has maximal Jordan type on $M$ if and
only if it has maximal Jordan type on $M^\#$.
\end{prop}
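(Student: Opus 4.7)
The plan is to reduce the first isomorphism to an equality of Jordan types of two $\pi$-points related by the antipode, and to extract that equality from the maximality hypothesis via Remark~\ref{indep}(2). The second isomorphism $\alpha_K^*(M_K)\simeq(\alpha_K^*(M_K))^\#$ is immediate since every indecomposable $K[t]/t^p$-module $[i]$ is self-dual, so $K$-linear dualization preserves Jordan type of any $K[t]/t^p$-module.

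For the first isomorphism I would start from formula~(\ref{tr}), which gives $\rho_{M^\#}(\alpha_K(t))=\phi(\rho_M(S(\alpha_K(t))))$; since $\phi$ is matrix transposition and transposition preserves Jordan type of nilpotent operators, $\JType(\alpha_K^*(M_K^\#))$ is the Jordan type of the action of $S(\alpha_K(t))$ on $M_K$. Choosing the unipotent abelian subgroup scheme $C_K\subset G_K$ through which $\alpha_K$ factors, $S$ restricts to an algebra automorphism of the commutative Hopf algebra $KC_K$, so the assignment $\tilde\alpha_K(t):=S(\alpha_K(t))$ defines a flat algebra map $\tilde\alpha_K\colon K[t]/t^p\to KG$ factoring through $KC_K$, i.e., a $\pi$-point of $G$, and by construction $\JType(\tilde\alpha_K^*(M_K))=\JType(\alpha_K^*(M_K^\#))$.

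The key step is then to verify $\tilde\alpha_K\sim\alpha_K$ in $\Pi(G)$. For any finite dimensional $kG$-module $N$, the module $\tilde\alpha_K^*(N_K)$ is projective iff $\rho_N(S(\alpha_K(t)))$ has all Jordan blocks of size $p$, and by formula~(\ref{tr}) and transposition-invariance of Jordan type this is equivalent to $\alpha_K^*(N_K^\#)$ being projective. Combining with the standard identity $\Pi(G)_N=\Pi(G)_{N^\#}$ (which follows from $\End_k N\simeq\End_k N^\#$ as $kG$-modules, so both modules have the same cohomological support), we deduce that $\tilde\alpha_K^*(N_K)$ is projective iff $\alpha_K^*(N_K)$ is, hence $\tilde\alpha_K\sim\alpha_K$. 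Remark~\ref{indep}(2) together with the maximality hypothesis then forces $\JType(\tilde\alpha_K^*(M_K))=\JType(\alpha_K^*(M_K))$, completing the first isomorphism.

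For the final assertion, I note that the identity $\JType(\beta_L^*(M_L^\#))=\JType(\tilde\beta_L^*(M_L))$ used in the argument holds for \emph{every} $\pi$-point $\beta_L$, and $\beta_L\mapsto\tilde\beta_L$ is an involution on $\pi$-points because $kG$ is cocommutative (so $S^2=\mathrm{id}$). Hence, assuming $\alpha_K$ has maximal Jordan type on $M$, for an arbitrary $\beta_L$ we have
$$\JType(\beta_L^*(M_L^\#))=\JType(\tilde\beta_L^*(M_L))\le\JType(\alpha_K^*(M_K))=\JType(\alpha_K^*(M_K^\#)),$$
so $\alpha_K$ is maximal on $M^\#$; the converse follows by applying this to $M^\#$ and using $M^{\#\#}\simeq M$. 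The main obstacle I anticipate is the clean verification of $\tilde\alpha_K\sim\alpha_K$: without passing through $\Pi(G)_N=\Pi(G)_{N^\#}$ (or equivalently exploiting that the inverse automorphism $[-1]$ of the abelian $C_K$ acts trivially on $\Pi(C_K)$), the maximality hypothesis on $\alpha_K$ offers no direct control on $\tilde\alpha_K$, and one would be forced into a delicate direct comparison inside $KC_K$.
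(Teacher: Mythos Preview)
Your argument is correct and takes a genuinely different route from the paper's. The paper works explicitly inside the commutative algebra $KC_K$: after writing $KC_K \simeq K[T_1,\dots,T_r]/(T_i^{p^{e_i}})$ and setting $t_i=T_i^{p^{e_i-1}}$, it uses the counit--antipode identity $\mu\circ(S\otimes\mathrm{id})\circ\nabla=\epsilon$ to show that $\alpha_K(t)+S(\alpha_K(t))$ lies in the ideal $I\cdot I^{(p)}+I^{(p)}\cdot I$, and then invokes \cite[1.12]{FPS} directly to conclude that $\rho_M(\alpha_K(t))$ and $\rho_M(S(\alpha_K(t)))$ have the same (maximal) Jordan type. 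Your approach instead packages $S(\alpha_K(t))$ as a bona fide $\pi$-point $\tilde\alpha_K$, proves $\tilde\alpha_K\sim\alpha_K$ via the support identity $\Pi(G)_N=\Pi(G)_{N^\#}$, and then invokes Remark~\ref{indep}(2). Both arguments ultimately rest on results of \cite{FPS}, but on different ones: the paper uses the perturbation lemma \cite[1.12]{FPS}, while you use the well-definedness of the maximal Jordan type on equivalence classes. Your route is more conceptual and handles the ``Moreover'' clause especially cleanly via the involution $\beta_L\mapsto\tilde\beta_L$; the paper's route is more self-contained in that it avoids importing $\Pi(G)_N=\Pi(G)_{N^\#}$ as a separate input (so you should make sure that identity is available to you independently of this proposition, e.g.\ via the cohomological description of $\Pi(G)_N$ in Theorem~\ref{iso}).
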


\begin{proof}
Let $C_K \subset G_K$ be a unipotent abelian 
group scheme through which $\alpha_K$
factors, so that $\alpha_K$ has maximal Jordan type for 
$M_K$ as a $KC_K$-module.   The restriction 
of the $KG$-module $M_K^\#$ to $KC_K$ is the 
dual of the restriction of $M_K$, since
$\xymatrix@-.7pc{KC_K \ar[r] & KG_K}$ is a map of Hopf algebras.  
Thus, to prove the first asserted 
isomorphism, it suffices to assume $G$ is a 
unipotent abelian finite group 
scheme.  

Extending scalars if necessary so that $k$ is perfect, we get 

 $kG \simeq k[T_1, \dots, T_r]/(T_i^{p^{e_i}})$ (see \cite[14.4]{W}).   
Let $t_i = T_i^{p^{e_i-1}}$. Let $I = (T_1, \dots, T_r)$ be the 
augmentation ideal of $kG$, and let $I^{(p)}$ 
be the ideal generated by $(t_1, \dots, t_r)$.
Let $\nabla: kG \to kG \otimes kG$ be the coproduct in $kG$.   Recall that 
$\nabla(\alpha_K(t)) - 1 \otimes \alpha_K(t) - \alpha_K(t) \otimes 1 
\in  I \otimes I$ (\cite[I.2.4]{J}). 
Since $\alpha_K(t)$ has $p$-th power 0, 
we can refine this further, concluding that  
\begin{equation}
\label{nabla1}
\nabla(\alpha_K(t)) - 1 \otimes \alpha_K(t) - \alpha_K(t) \otimes 1 
\in  I \otimes I^{(p)} + I^{(p)} \otimes I.
\end{equation} 
Let $\mu:kG\otimes kG \to kG$ 
be the multiplication map in $kG$.   We have 
$(\mu \circ (S \otimes \rm Id))(\nabla(\alpha_K(t)) = 
\epsilon(\alpha_K(t)) = 0$  
by one of the Hopf algebra axioms (see \cite[I.2.3]{J}) where 
$\epsilon: kG \to k$ is the counit map, and $S$ is the antipode.   
Hence, applying
$\mu \circ (S \otimes \rm Id)$ to (\ref{nabla1}), we get  
$$
\alpha_K(t) + S(\alpha_K(t)) \subset I \cdot I^{(p)} + I^{(p)} \cdot I.
$$
Moreover, we obtain the same inclusion if we change the base field from $K$ to
any field extension $L$ over $K$.
Therefore, we can apply \cite[1.12]{FPS} to conclude that $\rho_M(\alpha_K(t))$
and $\rho_M(S(\alpha_K(t)))$ have the same (maximal) Jordan type.  Hence,  
$\rho_M(\alpha_K(t))$ and $\rho_{M^\#}(\alpha_K(t)) = 
\phi(\rho_M(S(\alpha_K(t))))$ 
have the same (maximal) Jordan type.  We conclude that 
$\alpha_K^*(M_K^\#) ~ \simeq ~ \alpha_K^*(M_K)$.

The second isomorphism follows immediately 
from the observation that $[i]^\# = [i]$.

\end{proof}

Since sending $M$ to $M^\#$ is idempotent, 
we get the following corollary.

\begin{cor}
Let $G$ be a finite group scheme, and let $M$ 
be a finite dimensional $kG$-module.
Then 
$$\Gamma(G)_M ~ = ~ \Gamma(G)_{M^\#}.$$
\end{cor}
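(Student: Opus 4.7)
The plan is to deduce this corollary directly from the second assertion of Proposition \ref{dual}, which already states the key biconditional at the level of individual $\pi$-points. Recall that by definition $\Gamma(G)_M \subset \Pi(G)$ consists of those equivalence classes $[\alpha_K]$ such that (for some and hence any representative $\alpha_K$) the Jordan type of $\alpha_K^*(M_K)$ fails to be maximal on $M$. So the entire task is to verify that the predicate ``$\alpha_K$ has maximal Jordan type on $-$'' produces the same subset of $\Pi(G)$ when applied to $M$ and when applied to $M^\#$.

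First I would note that Proposition \ref{dual} directly yields the forward inclusion: if $[\alpha_K] \notin \Gamma(G)_M$, then $\alpha_K$ has maximal Jordan type on $M$, and hence by Proposition \ref{dual} also on $M^\#$, so $[\alpha_K] \notin \Gamma(G)_{M^\#}$. This shows $\Gamma(G)_{M^\#} \subset \Gamma(G)_M$.

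Then for the reverse inclusion I would invoke the idempotence of the duality functor, i.e. the canonical isomorphism $(M^\#)^\# \simeq M$ of $kG$-modules. Applying the inclusion just established with $M^\#$ in place of $M$ gives $\Gamma(G)_{(M^\#)^\#} \subset \Gamma(G)_{M^\#}$, and identifying $(M^\#)^\#$ with $M$ yields $\Gamma(G)_M \subset \Gamma(G)_{M^\#}$. Combining the two inclusions gives the equality.

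I do not expect any serious obstacle here, since Proposition \ref{dual} has already done the substantive work (namely showing that $\rho_M(\alpha_K(t))$ and $\rho_{M^\#}(\alpha_K(t))$ have the same Jordan type whenever one of them is maximal). The only small point to check is that it is legitimate to speak of $\Gamma(G)_M$ as a subset of $\Pi(G)$ rather than of representing $\pi$-points, but this is exactly the content of Remark \ref{indep}(2), which says that maximality of Jordan type is an invariant of the equivalence class $[\alpha_K]$; hence the subset of $\Pi(G)$ cut out by non-maximality of $M$ is well defined, and the above argument pushes through verbatim.
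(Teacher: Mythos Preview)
Your proposal is correct and matches the paper's approach exactly: the paper simply remarks, just before stating the corollary, that ``since sending $M$ to $M^\#$ is idempotent, we get the following corollary,'' which is precisely your strategy of using one direction of Proposition~\ref{dual} and then applying duality a second time to obtain the reverse inclusion.
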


The next corollary follows 
immediately from Corollary \ref{tensor}, Proposition \ref{dual}
and the isomorphism 
 $\Hom_k(M,N) \simeq M^\# \otimes N$. 

\begin{cor}
\label{const}
Let $G$ be a finite group scheme, and let $M, ~N$ 
be finite dimensional $kG$-modules
of constant Jordan types $\ul a = a_p[p] + 
\cdots + a_1[1], ~ \ul b = b_p[p] + \cdots + b_1[1]$, respectively.  
Then $Hom_k(M,N)$ has constant Jordan type 
$\ul a \otimes \ul b$ (given explicitly 
by the formula in the Appendix).
\end{cor}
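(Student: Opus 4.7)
The plan is to reduce the statement to Corollary~\ref{tensor} via the standard $kG$-module identification $\Hom_k(M,N) \cong M^\# \otimes N$, handling the dualization through Proposition~\ref{dual} and the tensor product step through Theorem~\ref{max}(1) together with the tensor formula from the Appendix.

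First I would verify that $M^\#$ has constant Jordan type equal to $\underline{a}$. Since $M$ has constant Jordan type, every $\pi$-point $\alpha_K$ of $G$ automatically achieves maximal Jordan type on $M$ (the types at distinct $\pi$-points are all equal, so none can be strictly greater than another). Proposition~\ref{dual} then applies at every $\pi$-point, yielding $\alpha_K^*(M_K^\#) \simeq \alpha_K^*(M_K)$, which has Jordan type $\underline{a}$ (using also $[i]^\# \simeq [i]$). Hence $M^\#$ is of constant Jordan type $\underline{a}$.

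Next I would invoke Corollary~\ref{tensor} applied to $M^\#$ and $N$, both now of constant Jordan type, to conclude that $M^\# \otimes N \cong \Hom_k(M,N)$ has constant Jordan type. To identify that constant type explicitly, pick any $\pi$-point $\alpha_K$. Because $\alpha_K$ has maximal Jordan type on both $M^\#$ and $N$, statement (1) of Theorem~\ref{max} gives
\[
\alpha_K^*\bigl(\Hom_k(M,N)_K\bigr) \;\simeq\; \alpha_K^*(M^\#_K) \otimes_K \alpha_K^*(N_K),
\]
and the right hand side has Jordan type $\underline{a} \otimes \underline{b}$, which expands into an explicit sum of $[j]$'s via the coefficients $C^j_{\ell s}$ computed in the Appendix (Corollary~\ref{tensor2}).

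There is no serious obstacle here: all the real work has already been carried out in Proposition~\ref{dual}, Corollary~\ref{tensor}, and Theorem~\ref{max}. The only bookkeeping point to emphasize is that ``constant Jordan type'' is precisely the strongest possible maximality hypothesis — the constant type is tautologically the maximal type at every $\pi$-point — and this uniformity is exactly what allows Proposition~\ref{dual} and Theorem~\ref{max}(1) to be applied at every $\pi$-point simultaneously.
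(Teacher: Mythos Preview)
Your proposal is correct and follows exactly the same approach as the paper, which simply notes that the corollary follows immediately from Corollary~\ref{tensor}, Proposition~\ref{dual}, and the isomorphism $\Hom_k(M,N) \simeq M^\# \otimes N$. Your version merely fills in the details of why those ingredients suffice (in particular, invoking Theorem~\ref{max}(1) to pin down the explicit type), but there is no genuine difference in method.
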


\begin{cor}
\label{hom}
Let $G$ be a finite group scheme, and consider two finite 
dimensional $kG$-modules $M$, $N$, and a $\pi$-point $\alpha_K$ of $G$. 
If $\alpha_K$ has maximal Jordan type on $Hom(M,N)$,   then  
$$\alpha_K^*(\Hom_K(M_K,N_K)) 
 ~ \simeq ~ \Hom_K(\alpha_K^*(M_K),  
\alpha_K^*(N_K))$$
\end{cor}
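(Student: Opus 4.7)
The plan is to reduce the claim to Theorem~\ref{max}(2) and Proposition~\ref{dual} via the standard $kG$-module isomorphism $\Hom_k(M,N) \simeq M^\# \otimes N$. Because $K[t]/t^p$ is a Hopf algebra with primitive generator $t$, for any $K[t]/t^p$-modules $A,B$ there is a canonical isomorphism $\Hom_K(A,B) \simeq A^\# \otimes_K B$ of $K[t]/t^p$-modules. Applied to the right-hand side of the target statement, this rewrites the desired isomorphism as
\[
\alpha_K^*(M_K^\# \otimes_K N_K) \;\simeq\; (\alpha_K^*M_K)^\# \otimes_K \alpha_K^*N_K
\]
of $K[t]/t^p$-modules.

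Since $\alpha_K$ has maximal Jordan type on $\Hom(M,N) \simeq M^\# \otimes N$ by hypothesis, Theorem~\ref{max}(2) supplies
\[
\alpha_K^*(M_K^\# \otimes_K N_K) \;\simeq\; \alpha_K^*(M_K^\#) \otimes_K \alpha_K^*N_K.
\]
Combining this with Proposition~\ref{dual}, which provides the identification $\alpha_K^*(M_K^\#) \simeq (\alpha_K^*M_K)^\#$ whenever $\alpha_K$ has maximal Jordan type on $M$, immediately yields the desired isomorphism.

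The main obstacle is to bridge the gap between maximality on $\Hom(M,N)$ and maximality on $M$, which are not a priori equivalent: if $N$ is projective, then $\Hom(M,N)$ is projective and hence automatically of maximal type, even when $\alpha_K$ is not maximal on $M$. To handle this, I would revisit the ideal-theoretic argument from the proof of Proposition~\ref{dual}: the antipode identity $\alpha_K(t) + S(\alpha_K(t)) \in I \cdot I^{(p)} + I^{(p)} \cdot I$, together with the coproduct formula~(\ref{nabla}), forces the operator $\rho_{M^\# \otimes N}(\alpha_K(t))$ to differ from the operator obtained by replacing $\rho_{M^\#}$ with the $K[t]/t^p$-dual of $\rho_M$ by a correction term in the relevant ideal. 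Under the maximality hypothesis on $M^\# \otimes N$, \cite[1.12]{FPS} then forces the two operators to have the same Jordan type, which is exactly the required $K[t]/t^p$-isomorphism.
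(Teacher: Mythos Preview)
Your overall strategy is sound and matches the paper's: identify $\Hom_k(M,N)\simeq M^\#\otimes N$, invoke Theorem~\ref{max}(2) to split the tensor, and then try to replace $\alpha_K^*(M_K^\#)$ by $(\alpha_K^*M_K)^\#$. You also correctly isolate the real difficulty: Proposition~\ref{dual} needs maximality of $\alpha_K$ on $M$, which is not part of the hypothesis.

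Where your argument falls short is the proposed fix. You suggest combining the antipode identity with formula~(\ref{nabla}) and then invoking \cite[1.12]{FPS} under the maximality hypothesis on $M^\#\otimes N$. But \cite[1.12]{FPS} requires that the Jordan type of the ``main'' operator dominate that of every linear combination of the correction elements. In the proof of Theorem~\ref{max}(2) this domination is established by comparing with a generic $\pi$-point, and the correction elements are precisely $\rho_{M^\#}(t_i)\otimes 1$ and $1\otimes\rho_N(t_j)$. Your additional correction term, coming from $\phi(\rho_M(I\cdot I^{(p)}))\otimes 1$, is not of this form, and you give no argument that the maximal type on $M^\#\otimes N$ dominates linear combinations involving it. Without that, the appeal to \cite[1.12]{FPS} is unjustified; in effect you are assuming exactly the control over $\rho_M$ versus $\rho_{M^\#}$ that you set out to prove.

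The paper closes the gap differently and more cleanly. It first passes to the unipotent abelian subgroup scheme $C_K$ through which $\alpha_K$ factors; since restriction along the Hopf map $KC_K\hookrightarrow KG$ commutes with $\Hom$, nothing is lost, and now $\Pi(C_K)$ is irreducible. From Theorem~\ref{max}(2) one has $\alpha_K^*(M_K^\#\otimes N_K)\simeq\alpha_K^*(M_K^\#)\otimes\alpha_K^*(N_K)$. If this module is projective then one of the tensor factors is, and the conclusion is immediate. If not, then neither factor is projective, and Corollary~\ref{easy} (which uses irreducibility) forces $\alpha_K$ to be maximal on both $M^\#$ and $N$; now Proposition~\ref{dual} applies legitimately. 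You should replace your final paragraph with this reduction-plus-case-split argument.
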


\begin{proof}
Let $i: C_K \hookrightarrow G_K$ be a unipotent 
abelian group scheme through which $\alpha_K$ factors, so that $\alpha_K$ 
is maximal on $\Hom_K(M_K,N_K)$ as a $KC_K$-module.  
Since $i: C_K \hookrightarrow G_K$ is a map of 
Hopf algebras, it commutes with $\Hom$.   
Hence, we may assume  that $G$ is a unipotent 
abelian group scheme.  In particular, $\Pi(G)$ is irreducible. 

Since $\alpha_K$  
has maximal Jordan type on $\Hom(M,N) \simeq  M^\# \otimes N$,   
Theorem \ref{max}(2) implies that
$$
\alpha_K^*(M_K^\# \otimes N_K) \simeq \alpha_K^*(M_K^\#) 
\otimes \alpha^*_K(N_K).
$$  
If $\alpha_K^*(M_K^\#) \otimes \, \alpha^*_K(N_K)$ 
is projective, then either 
$\alpha^*_K(N_K)$ or $\alpha^*_K(M_K^\#)$ is projective. 
Since projectivity of $\alpha^*_K(M_K^\#)$ 
implies projectivity of $\alpha^*_K(M_K^\#)$, we conclude that in this case 
$\Hom_K(\alpha_K^*(M_K),  \alpha_K^*(N_K))$ is projective. 

Assume that $\alpha_K^*(M_K^\#) \otimes \, \alpha^*_K(N_K)$ 
is not projective. In this case neither 
$\alpha^*_K(N_K)$ nor $\alpha^*_K(M_K^\#)$ is projective. 
Since $\Pi(G)$ is irreducible, Corollary~\ref{easy} implies 
that $\alpha_K$  
has maximal Jordan types on both $N$ and $M^\#$.  Hence, 
$\alpha^*_K(M_K^\#) ~\simeq ~ (\alpha^*_K(M_K))^\#$. 
Therefore, $\Hom_K(\alpha_K^*(M_K),  
\alpha_K^*(N_K)) \simeq (\alpha^*_K(M_K))^\# \otimes 
\alpha_K^*(N_K) \simeq \alpha^*_K(M_K^\#) \otimes \alpha_K^*(N_K) 
\simeq \alpha_K^*(\Hom_K(M_K,N_K))$.
{\sloppy

}
\end{proof}

We now conclude that endotrivial modules are modules
of constant Jordan type. The second statement of the 
theorem provides a ``local" criterion of endotriviality, 
similar to the projectivity criterion given  by the Dade's lemma
(see \cite{Da}).

\begin{thm}
\label{endo}
Let $G$ be a finite group scheme, and let $M$ 
be a finite dimensional $kG$-module.
\begin{enumerate}
\item 
If $M$ is endotrivial, then $M$ has constant Jordan type 
of the form either $m[p] + 1[1]$ or $m[p]+1[p-1]$ for some 
$m \geq 0$, and thus $\alpha_K^*(M_K)$ is endotrivial
for every $\pi$-point $\alpha_K$ of $G$.
\item 
Conversely, if $\alpha_K^*(M_K)$ is endotrivial for 
each $\pi$-point $\alpha_K$
of $G$ (and hence of the form either
$m[p] + 1[1]$ or $m[p]+1[p-1]$), then $M$ is endotrivial.
\end{enumerate}
\end{thm}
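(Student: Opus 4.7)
The plan is to use Corollary~\ref{hom} as the bridge between endotriviality of $M$ over $kG$ and endotriviality of the restrictions $\alpha_K^*(M_K)$ over $K[t]/t^p$, combined with the classification of endotrivial $k[t]/t^p$-modules noted just before the theorem (indecomposable ones being stably isomorphic to $1[1]$ or $1[p-1]$) and the detection of projectivity via $\pi$-points supplied by Theorem~\ref{iso}.

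For part (1), assume $M$ is endotrivial, so $\Hom_k(M,M) \cong k \oplus P$ with $P$ projective.  By Proposition~\ref{heller} this module has constant Jordan type, hence every $\pi$-point has maximal Jordan type on $\Hom_k(M,M)$, and Corollary~\ref{hom} yields
\[
\alpha_K^*(\Hom_k(M,M)) \ \simeq \ \Hom_K(\alpha_K^*(M_K),\alpha_K^*(M_K))
\]
for each $\pi$-point $\alpha_K$.  The left-hand side is stably trivial, so $\alpha_K^*(M_K)$ is an endotrivial $K[t]/t^p$-module and therefore of type $m[p]+1[1]$ or $m[p]+1[p-1]$.  Since $\dim_K \alpha_K^*(M_K) = \dim_k M$, the value of $m$ and the choice of small summand are determined by $\dim_k M \bmod p$ (when $p=2$ the two forms coincide), so $M$ has constant Jordan type of the required form.

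For part (2), suppose $\alpha_K^*(M_K)$ is endotrivial for every $\pi$-point, so of type $m[p]+1[1]$ or $m[p]+1[p-1]$.  The same dimension argument shows this type is independent of $\alpha_K$, so $M$ has constant Jordan type; by Corollary~\ref{const} the module $\Hom_k(M,M) \cong M^\# \otimes M$ also has constant Jordan type, which the tensor-product formula in the Appendix identifies as $n[p]+1[1]$ for some $n \geq 0$.  Consider the short exact sequence of $kG$-modules
\[
0 \ \longrightarrow \ k \ \longrightarrow \ \Hom_k(M,M) \ \longrightarrow \ N \ \longrightarrow \ 0,
\]
where the first map sends $1$ to $\Id_M$.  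Constant Jordan type on $\Hom_k(M,M)$ ensures every $\pi$-point is maximal there, so Corollary~\ref{hom} again gives $\alpha_K^*(\Hom_k(M,M)) \simeq \Hom_K(\alpha_K^*(M_K),\alpha_K^*(M_K))$, and by naturality this identification carries the trivial summand $k$ on the left to the identity summand $K \cdot \Id$ on the right.  Since $\alpha_K^*(M_K)$ is endotrivial, the right-hand side splits as $K \oplus P_K$ with $P_K$ projective over $K[t]/t^p$, so $\alpha_K^*(N) \cong P_K$ is projective for every $\pi$-point.  By Theorem~\ref{iso}, $N$ is $kG$-projective, the sequence splits, and $\Hom_k(M,M) \cong k \oplus N$, showing $M$ is endotrivial.

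The main subtlety I anticipate is verifying that the isomorphism of Corollary~\ref{hom} is compatible with the respective identity morphisms, so that the quotient $N$ really does restrict to the projective summand at each $\pi$-point.  This is essentially a naturality check: the $kG$-map $k \to \Hom_k(M,M)$ sending $1$ to $\Id_M$ base-changes along $\alpha_K$ to the map $K \to \Hom_K(\alpha_K^*(M_K),\alpha_K^*(M_K))$ sending $1$ to $\Id_{\alpha_K^*(M_K)}$, and this is precisely the trivial summand appearing in the endotrivial decomposition over $K[t]/t^p$.
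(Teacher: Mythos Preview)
Your proof of part~(1) is essentially identical to the paper's.

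For part~(2), your approach is dual to the paper's: you use the short exact sequence $0 \to k \to \End_k(M) \to N \to 0$ given by the identity inclusion, while the paper uses $0 \to X \to \End_k(M) \to k \to 0$ given by the trace map. Since the trace is a $kG$-linear splitting of the identity inclusion (as $\dim_k M$ is prime to $p$), these sequences are equivalent and in fact $N \cong X$.

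However, your resolution of the subtlety you flag does not hold up. The isomorphism of Corollary~\ref{hom} is only an abstract isomorphism of $K[t]/t^p$-modules; its proof passes through Theorem~\ref{max}(2) and Proposition~\ref{dual}, and neither step produces the identity map on the common underlying vector space $\Hom_K(M_K,M_K)$. So there is no reason it carries $\Id_{M_K}$ to $\Id_{\alpha_K^*(M_K)}$. Knowing merely that $K\cdot\Id$ is a trivial \emph{submodule} of $\alpha_K^*(\End_K(M_K)) \cong n[p]\oplus[1]$ is not enough: it could sit in the socle of a $[p]$ summand, in which case $\alpha_K^*(N_K)$ would pick up a $[p-1]$ block and fail to be projective.

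The fix is precisely the paper's observation: because $\dim_k M$ is prime to $p$, the trace map gives a $kG$-linear retraction of $k \hookrightarrow \End_k(M)$, so your sequence already splits over $kG$ and $\End_k(M)\cong k\oplus N$. Restricting yields $\alpha_K^*(\End_K(M_K))\cong K\oplus\alpha_K^*(N_K)$; combined with the constant Jordan type $n[p]+1[1]$ you established for $\End_k(M)$, Krull--Schmidt forces $\alpha_K^*(N_K)\cong n[p]$, and the $\pi$-point projectivity criterion finishes the argument. With this correction Corollary~\ref{hom} is not needed in part~(2) at all.
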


\begin{proof}
Observe that any endotrivial module must have 
dimension whose square is congruent
to 1 modulo $p$ and thus must have dimension 
congruent to either 1 or $p-1$ modulo
$p$.   To prove the first assertion, we assume that $M$ 
is endotrivial, so  that $\End_k(M) = k \oplus \proj$.
Thus,  $\alpha_K^*(\End_K(M_K))$ has Jordan type  $m[p] + 1[1]$ 
at each $\pi$-point $\alpha_K$.   In particular, 
every $\pi$-point $\alpha_K$ has maximal Jordan type on $\End_k(M)$. 
By Corollary~\ref{hom}, 
$\alpha_K^*(\End_K(M_K)) \simeq \End_K(\alpha^*_K(M_K))$. 
Hence, $\alpha^*_K(M_K)$ is an endotrivial $K[t]/t^p$-module.    
The statement now 
follows from the fact that the only such 
modules are of the form  $m[p] + 1[1]$ or $m[p] + 1[p-1]$ 
for some  $m \geq 0 $.

To prove the converse,  we
assume that $\alpha_K^*(M_K)$ is 
an endotrivial $K[t]/t^p$-module
for each $\pi$-point $\xymatrix@-.5pc{\alpha_K: K[t]/t^p \ar[r] & KG}$.  
Thus, for each $\alpha_K$,
 $\alpha_K^*(M_K)$ has Jordan type either  
$m[p] + 1[1]$ or 
$m[p] + 1[p-1]$ for some $m \geq 0$.  
Since the dimension of 
 $M$ can not be congruent
to both $1$ and $p-1$ modulo $p$, 
we conclude that $M$ has constant Jordan
type.  Consider the short exact   sequence
\begin{equation}
\label{trace}
\xymatrix{
0 \ar[r] & X \ar[r] & \End_k(M) \ar[r]^{\quad \quad  {\rm Tr}} &  k \ar[r] & 0,
}
\end{equation}
where $\rm Tr$ is the trace map.  By Corollary 
\ref{hom}, $\alpha_K^*(\End_K(M_K)) \simeq
\End_K(\alpha_K^*(M_K))$.  Moreover, 
because the dimension of $\End_k(M)$ 
is relatively prime to $p$, the trace 
map of (\ref{trace}) splits.  
Pulling back the split short exact 
sequence (\ref{trace})
via $\alpha_K$, we conclude that 
$$
\alpha_K^*(X) \simeq \ker\{\End_K(\xymatrix{\alpha_K^*(M_K)) \ar[r] & K}\}  
\simeq m[p]
$$
is projective
for all $\pi$-points $\alpha_K$ and 
thus $X$ is projective (\cite[5.4]{FP2}).  Hence, $M$ is
endotrivial.
\end{proof}

\vskip .2in


\section{Constructing modules of constant Jordan type}
\label{constr}

In this section, we consider two different methods of constructing modules
of constant Jordan type.  Proposition \ref{extend1} presents the observation
that an extension of modules of constant Jordan type has ``total module"
also of constant Jordan type if the extension splits when pulled back 
along any $\pi$-point.   This observation fits well with the Auslander-Reiten
theory of almost split exact sequences 
as we discuss in \S \ref{AR}.   Proposition 
\ref{create1} presents a method of constructing extensions of constant 
Jordan type whose pull-backs along $\pi$-points are not split.

We shall frequently utilize the Tate cohomology ring 
$\widehat \HHH^*(G,k)$ for a finite group scheme $G$, and 
The Tate Ext groups $\widehat \Ext^*(M,N)$ for $G$-modules $M,N$.  
Tate cohomology ring $\widehat \HHH^*(G,k)$ is a graded 
commutative $k$-algebra which coincides with regular cohomology 
in positive degrees.  One advantage of 
Tate cohomology that we will  exploit throughout this section is 
that it allows for arbitrary degree shifts.   
Namely, we have the following formulas  
$$\widehat \Ext^n(M,N) \simeq \widehat \Ext^0(\Omega^{n} M, N) 
= \Hom_{\stmod (kG)}(\Omega^{n} M, N) \simeq $$
$$\Hom_{\stmod (kG)}(M, \Omega^{-n} N) = \widehat \Ext^0 (M, \Omega^{-n} N)$$  
for any $n \in \Z$.  We refer the reader to \cite{B} for  further details.  

\begin{prop} \label{extend1}
Suppose that $G$ is a finite group scheme over $k$. Let $M$ and 
$N$ be $kG$-modules of constant Jordan type, and suppose that 
\begin{equation}
\label{seq}
\xymatrix{
 0 \ar[r] & M \ar[r] & B \ar[r] & N \ar[r] & 0 
}
\end{equation}
is an exact sequence. Let $\zeta \in \Ext_{kG}^1(N,M)$ be the 
class of  (\ref{seq}).
If for every $\pi$-point
$\xymatrix@-.5pc{\alpha_K: K[t]/t^p \ar[r] & KG}$ 
the restriction 
$\alpha_K^*(\zeta)$  is zero, then $B$ has constant Jordan type.
Moreover, if the Jordan types of $M$ and $N$ are $\sum_{i=1}^p
m_i[i]$ and $\sum_{i=1}^p n_i[i]$, then the Jordan type of 
$B$ is $\sum_{i=1}^p (m_i+n_i)[i]$. 
\end{prop}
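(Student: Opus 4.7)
The plan is to show that for every $\pi$-point $\alpha_K$, the pulled-back short exact sequence of $K[t]/t^p$-modules splits, so that $\alpha_K^*(B_K)$ is the direct sum $\alpha_K^*(M_K) \oplus \alpha_K^*(N_K)$ as $K[t]/t^p$-modules, and then to invoke the constant Jordan type hypotheses on $M$ and $N$.

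First, I would base change (\ref{seq}) along $k \to K$, which is exact, and then restrict along the flat map $\alpha_K: K[t]/t^p \to KG$. Flatness of $\alpha_K$ (built into Definition \ref{pi}) ensures that
\begin{equation*}
0 \ \longrightarrow \ \alpha_K^*(M_K) \ \longrightarrow \ \alpha_K^*(B_K) \ \longrightarrow \ \alpha_K^*(N_K) \ \longrightarrow \ 0
\end{equation*}
is a short exact sequence of $K[t]/t^p$-modules. By functoriality of $\Ext^1$, the class of this sequence in $\Ext^1_{K[t]/t^p}\bigl(\alpha_K^*(N_K),\alpha_K^*(M_K)\bigr)$ is precisely $\alpha_K^*(\zeta)$, which by hypothesis is zero. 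Hence the sequence splits, and we obtain
\begin{equation*}
\alpha_K^*(B_K) \ \cong \ \alpha_K^*(M_K) \oplus \alpha_K^*(N_K)
\end{equation*}
as $K[t]/t^p$-modules.

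Now, since $M$ and $N$ are of constant Jordan type equal to $\sum_i m_i[i]$ and $\sum_i n_i[i]$ respectively, the Jordan types of $\alpha_K^*(M_K)$ and $\alpha_K^*(N_K)$ do not depend on the choice of $\pi$-point $\alpha_K$. It follows from the displayed splitting that $\alpha_K^*(B_K)$ has Jordan type $\sum_i (m_i + n_i)[i]$, independently of $\alpha_K$. By Definition \ref{def}, $B$ has constant Jordan type of the asserted form.

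The only nontrivial point is identifying the class of the pulled-back sequence with $\alpha_K^*(\zeta)$ so that the splitting hypothesis applies, and this is just naturality of $\Ext^1$ under the composition of the base change functor $(-)_K$ with the restriction functor $\alpha_K^*$; everything else is formal. I do not foresee a genuine obstacle.
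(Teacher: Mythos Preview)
Your argument is correct and follows exactly the same approach as the paper: pull back along a $\pi$-point, use the vanishing of $\alpha_K^*(\zeta)$ to conclude the sequence splits, and read off the Jordan type of $B$ from the direct sum decomposition. You simply supply more detail (flatness for exactness, naturality of $\Ext^1$ for the identification of the class) than the paper, which dispatches the whole thing in two sentences.
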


\begin{proof}
Because the cohomology class vanishes under restriction to a 
$\pi$-point  $\alpha_K$, we have that the restriction of $(\ref{seq})$ along
$\alpha_K$ splits and $\alpha_K^*(B_K) \simeq 
\alpha_K^*(M_K) \oplus \alpha_K^*(N_K)$. The result is now obvious.
\end{proof}

Proposition \ref{extend1} does not always produce ``new" examples
of modules of constant Jordan type as we observe in the 
special case  $G = \Z/2\Z \times \Z/2\Z$.

\begin{ex}
Let $G$ be $\Z/2\Z \times \Z/2\Z$.  As presented in \cite{Ba}, \cite{HR}, 
there is a complete classification of the $kG$-modules.  Using this
classification, we observe that the only indecomposable $kG$-modules 
of constant Jordan type are of the form $\Omega^n(k)$ for some $n$. 

Namely, it is shown in \cite{HR} that all of the indecomposable 
$kG$-modules of odd dimension are of the form $\Omega^n(k)$ for some
$n$.  On the other hand, the non-projective indecomposable
modules of even dimension are all isomorphic to $L_{\zeta}$ for 
$\zeta \in \HHH^n(G,k)$.  The support varieties of these even dimensional
modules are proper non-trivial subvarieties of $\Pi(G)$, so that none of 
these have constant Jordan type.  

It is instructive to look more closely to see why Proposition \ref{extend1}
does not determine other modules of constant Jordan type in this
example.  Observe that  $\HHH^*(G,k) \cong \Ext_{kG}^*(k,k)$ is a 
polynomial ring in two variables having no non-zero element whose restriction
to every $\pi$-point vanishes. Hence the only possible application 
of Proposition \ref{extend1} would be in a situation where $N \cong 
\Omega^n(k)$, $M \cong \Omega^m(k)$ and $n < m$. Then (\ref{seq})
represents an element of negative Tate cohomology, 
$\zeta$ of $\widehat{\HHH}^{n-m+1}(G,k)$.   By Proposition \ref{negcoho}
which follows,
$\zeta$ restricts to zero at every $\pi$-point, but
the middle term of this non-split short exact sequence splits 
as $\Omega^{n+a}(k) \oplus 
\Omega^{n+b} \oplus \proj$ where $a$ and $b$ are nonnegative integers
such that $a+b = m-n$ (see the proof of Theorem \ref{not-endo}).
\end{ex}

The preceding example is special since $\Z/2\Z \times \Z/2\Z$ has tame 
representation type.   For more general groups, the observation of
Proposition \ref{extend1} in conjunction with the following proposition
does give new examples.

\begin{prop} 
\label{negcoho}  
Let $G$ be a finite group scheme with the
property that every $\pi$-point factors by way of a flat map
through a unipotent abelian group scheme
whose cohomology has Krull dimension at least 2.   Let $\zeta \in
\widehat{\HHH}^n(G,k)$ for $n < 0$ be an element in negative Tate cohomology
of $G$ corresponding to a short exact sequence of the form
\begin{equation}
\label{seqq}
\xymatrix{
 0 \ar[r] & k \ar[r] & E \ar[r] & \Omega^{n-1}(k) \ar[r] & 0.
}
\end{equation}
Then for any $\pi$-point 
$\xymatrix@-.5pc{\alpha_K: K[t]/t^p \ar[r] & KG}$,  the 
pull-back of (\ref{seqq}) along $\alpha_K$ is split
(i.e., $\alpha_K^*(\zeta_K) = 0$).
\end{prop}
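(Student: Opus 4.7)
The strategy is to factor the $\pi$-point through the hypothesized unipotent abelian subgroup scheme and then exploit a structural dichotomy: the Krull-dimension hypothesis forces negative Tate cohomology of the subgroup scheme to be \emph{torsion} over positive cohomology, whereas Tate cohomology of $K[t]/t^p$ is \emph{periodic} with every non-zero homogeneous class invertible. Playing these against each other will force the restriction to vanish.

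First I would apply the hypothesis to write $\alpha_K = \iota \circ \alpha'$, where $\alpha' \colon K[t]/t^p \to KC_K$ is flat and $\iota \colon KC_K \hookrightarrow KG$ is the inclusion of the group algebra of a unipotent abelian subgroup scheme $C_K$ with $\dim_{\mathrm{Krull}} \HHH^\bu(C_K,K) \geq 2$. Setting $\eta := \iota^*(\zeta_K) \in \widehat{\HHH}^n(C_K,K)$, functoriality of Tate cohomology yields $\alpha_K^*(\zeta_K) = (\alpha')^*(\eta)$, so it suffices to show $(\alpha')^*(\eta) = 0$ in $\widehat{\HHH}^n(K[t]/t^p, K)$. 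Since $[\alpha']$ represents a closed point of $\Pi(C_K) \cong \Proj \HHH^\bu(C_K,K)$, a projective variety of dimension at least one, the maximal ideal at $[\alpha']$ does not contain all of $\HHH^{>0}(C_K,K)$; hence I can choose an even-degree class $\kappa \in \HHH^{2m}(C_K, K)$ (for some $m > 0$) with $(\alpha')^*(\kappa) \neq 0$ in $\HHH^{2m}(K[t]/t^p, K)$.

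The key step is to establish the torsion relation $\kappa^N \cdot \eta = 0$ in $\widehat{\HHH}^\bu(C_K, K)$ for all $N$ sufficiently large. This is where the Krull-dimension hypothesis enters essentially: under the Greenlees/Benson local-cohomology description, $\widehat{\HHH}^{<0}(C_K, K)$ is assembled from the local cohomology of $\HHH^\bu(C_K,K)$ at the irrelevant ideal $\mathfrak{m} = \HHH^{>0}(C_K,K)$, and local cohomology modules are by definition $\mathfrak{m}$-torsion---so $\eta$ is annihilated by some power of $\mathfrak{m}$, hence by $\kappa^N$ for $N$ large. Applying $(\alpha')^*$ then gives $(\alpha')^*(\kappa)^N \cdot (\alpha')^*(\eta) = 0$ in $\widehat{\HHH}^{2mN+n}(K[t]/t^p, K)$. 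Because $\widehat{\HHH}^\bu(K[t]/t^p, K)$ is periodic with an invertible generator in even degree, the non-zero class $(\alpha')^*(\kappa)^N$ is a unit; cancelling it yields $(\alpha')^*(\eta) = 0$, and therefore $\alpha_K^*(\zeta_K) = 0$.

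The main obstacle is precisely the torsion claim for $\widehat{\HHH}^{<0}(C_K,K)$ as a module over $\HHH^\bu(C_K,K)$: it is a non-formal structural input relying on the local-cohomology description of Tate cohomology, and its failure for cyclic $\pi$-points (where Tate cohomology is genuinely periodic and $\HHH^\bu$ has Krull dimension one) is exactly what the Krull-dimension-at-least-two hypothesis is designed to rule out.
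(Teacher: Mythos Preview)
Your approach is sound in outline and genuinely different from the paper's: the paper reduces in several steps to a rank-two elementary abelian $p$-group with $KE=K[u,v]/(u^p,v^p)$ and $u=\alpha_K(t)$, and then uses the explicit generators of $\Omega^m(K)$ to show that every socle element of $\Omega^m(K)$ lies in $u^{p-1}\Omega^m(K)$, so the representing map $K\to\Omega^m(K)$ factors through a free $K[t]/t^p$-module. You instead stay at the level of the intermediate subgroup scheme $C_K$ and exploit the contrast between the $\HHH^\bu(C_K,K)$-module behaviour of $\widehat\HHH^{<0}(C_K,K)$ and the periodicity of $\widehat\HHH^*(K[t]/t^p,K)$. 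Your route is more conceptual and makes transparent why the Krull-dimension hypothesis is exactly what is needed; the paper's route is entirely self-contained and elementary.

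There is, however, a genuine gap in your justification of the key step $\kappa^N\eta=0$. Invoking the ``Greenlees/Benson local-cohomology description'' does not do the job: that spectral sequence identifies $\widehat\HHH^{<0}(C_K,K)\cong\HHH_*(C_K,K)$ with something built from $H^*_{\mathfrak m}(\HHH^*(C_K,K))$, which is indeed $\mathfrak m$-torsion, but the module structure involved is the \emph{cap} product, under which $\kappa^N$ simply pushes homological degree below zero and the vanishing is tautological. What you need is vanishing of the \emph{cup} product $\kappa^N\cdot\eta$ in $\widehat\HHH^*(C_K,K)$ once it lands in nonnegative degree, and that is a separate fact. The correct input is Benson--Carlson \cite[2.2]{BC1} (used elsewhere in the paper): since $\HHH^*(C_K,K)$ is not periodic, the product $\widehat\HHH^{\,n}(C_K,K)\cdot\HHH^{-n}(C_K,K)\to\widehat\HHH^{\,0}(C_K,K)$ is zero for every $n<0$. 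Taking $\kappa$ of degree $2$ (a polynomial generator not vanishing at $[\alpha']$) and choosing $N$ with $2N+n\in\{-1,0\}$, one gets $\kappa^N\eta=0$ directly when $n$ is even; when $n$ is odd one has $\kappa^N\eta\in\widehat\HHH^{-1}$, and then $\kappa^{N+1}\eta\in\HHH^1$ is annihilated by every degree-one class (again by Benson--Carlson and associativity), which forces it to vanish since the rank of $C_K$ is at least $2$. With this repair your cancellation argument in the periodic ring $\widehat\HHH^*(K[t]/t^p,K)$ goes through.
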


\begin{proof}
Recall that every element $\zeta$ of $\widehat{\HHH}^n(G,k)$ is represented 
by a homomorphism $\xymatrix{\zeta^\prime : k \ar[r] & \Omega^{-n}(k)}$. In 
order to prove the lemma, it is sufficient to show that for any 
$\pi$-point $\xymatrix{\alpha_K:K[t]/(t^p) \ar[r] & KG}$, the 
restriction of $\zeta^\prime$ factors through a projective $K[t]/(t^p)$-module.
We proceed to establish such a factorization using our knowledge
of the modules $\Omega^{-n}(K)$ for $n < 0$. 
By Theorem \ref{iso}, equivalent $\pi$-points induce the same
map in cohomology.   Thus, it suffices to prove the statement for some
representative  $\alpha_K$ of each $[\alpha_K] \in \Pi(G)$.

Our hypothesis immediately allows us to replace $G$ by some abelian unipotent
group scheme (defined over $K$) whose cohomology has Krull dimension at 
least 2.  After possibly passing to some finite extension of $K$, 
\cite[14.1]{W} enables us to conclude that 
$KG \simeq  K[T_1, \ldots, T_r]/(T_1^{p^{e_1}}, \dots, T_r^{p^{e_r}} )$, 
where $r \geq 2$.  
Because neither our
hypothesis nor our conclusion depends upon the coalgebra structure on $KG$,
we may assume that $G$ is an abelian $p$-group.  Let $t_i = T_i^{p^{e_i-1}}$
and let $E \subset G$ be the (unique) elementary abelian $p$-group
with
$KE = k[t_1,\ldots , t_r]/(t_1^p, \dots, t_r^p) \subset KG$.  
By \cite[4.1]{FP1}, any
$K$-rational $\pi$-point of $KG$ has a representative
factoring through $KE$.   Thus, we may assume that $G = E$ is an elementary
abelian $p$-group of rank at least 2.  Changing the generators of $KE$, we
may further assume $t_1 = \alpha_K(t)$; 
moreover, it suffices to assume that $E$
has rank 2, for if the proposition is valid for 
an elementary abelian subgroup of 
$G$ of rank 2, then it is valid for $G$ itself. 
Thus, we are reduced to the case
that $KG$ is isomorphic to $K[u,v]/(u^p,v^p)$ with $u = \alpha_K(t)$. 

The structure of a
minimal $KG$-projective resolution $P_\bu \to K$ is well known \cite{CTVZ}, 
with $P_{m-1} = KG^{\times m}$. A set of generators $a_1, \dots, 
a_m$ for $P_{m-1}$ can be chosen so that $\Omega^m(K)$ 
is the submodule generated by the elements 
$$
u^{p-1}a_1, \ \ va_1 -ua_2, \ \ v^{p-1}a_2- u^{p-1}a_3, \ \ 
va_3-ua_4, \ \ \ldots, \ \ va_{m-1} -ua_m, \ \ v^{p-1}a_m$$
for  $m$  even and 
$$ua_1, \ \ va_1-u^{p-1}a_2, \ \ v^{p-1}a_2 - ua_3, \ \ va_3  - u^{p-1}a_4, \ \ \ldots, \ \ v^{p-1}a_{m-1} - ua_m, \ \ va_m $$
for $m$ odd. 
Every $KG$-fixed point of $P_{m-1}$, and thus also of $\Omega^m(K)$ is a 
linear combination of the elements $u^{p-1}v^{p-1}a_i$.  
Moreover, for $i$ odd and $m$ even,
$$
u^{p-1}v^{p-1}a_i \ = \ u^{p-1}v^{p-2}(va_i - ua_{i+1})
$$
and for $i$ even and $m$ even,
$$
u^{p-1}v^{p-1}a_i \ = \ u^{p-1}(v^{p-1}a_i - u^{p-1}a_{i+1}).
$$
Similar formulas hold for $m$ odd. 
Consequently, every $KG$-fixed point of $\Omega^m(K)$ has the 
form $u^{p-1}z$ for some $z \in \Omega^m(K)$.    Thus,
as a map of $K[t]/t^p$-modules (via 
$\xymatrix@-.5pc{\alpha_K: K[t]/t^p \ar[r] & KG}$), $\zeta_K^\prime$
factors as the composition of 
$\xymatrix@-.5pc{K \ar[r] & K[t]/t^p}$ sending $1$ to $t^{p-1}$ and
$\xymatrix@-.5pc{K[t]/t^p \ar[r] &  \Omega^m(K)}$ sending $1$ to $z$.
Hence, $\alpha_K^*(\zeta^\prime)$ factors through the rank 1
projective module $K[t]/t^p$ as required.
\end{proof}

The following property is an immediate corollary  of Proposition~\ref{negcoho}.

\begin{cor}
\label{syzygy}
Let $E$ be an elementary abelian $p$-group of rank at least 2, and   
let $$\xymatrix{\theta: \Omega^m(k) \ar[r] & \Omega^n(k)} $$ 
be a homomorphism. 
If the restriction of $\theta$ along some $\pi$-point 
$\alpha_K: K[t]/t^p \to KE$  does not factor through a projective 
$K[t]/t^p$-module, then $m \geq n$.

\end{cor}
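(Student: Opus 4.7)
The plan is to reinterpret $\theta$ as an element of Tate cohomology and apply Proposition \ref{negcoho}. I would proceed by contrapositive: assuming $m < n$, I want to show that for \emph{every} $\pi$-point $\alpha_K$, the restriction $\alpha_K^*(\theta)$ factors through a projective $K[t]/t^p$-module.

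The key identification is the standard one in the stable module category,
\[
\Hom_{\stmod(kE)}(\Omega^m(k), \Omega^n(k)) \;\cong\; \widehat{\Ext}^{m-n}(k,k) \;=\; \widehat{\HHH}^{m-n}(E,k),
\]
under which the stable class of $\theta$ corresponds to some $\zeta \in \widehat{\HHH}^{m-n}(E,k)$. Since $m-n < 0$, this lives in \emph{negative} Tate cohomology, which is precisely the setting of Proposition \ref{negcoho}. The hypothesis of that proposition is satisfied trivially here: $E$ itself is a unipotent abelian group scheme through which every $\pi$-point of $E$ factors, and its cohomology ring has Krull dimension equal to $\mathrm{rank}(E) \geq 2$.

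Thus Proposition \ref{negcoho} yields $\alpha_K^*(\zeta_K) = 0$ in $\widehat{\HHH}^{m-n}(K[t]/t^p,K)$ for every $\pi$-point $\alpha_K: K[t]/t^p \to KE$. Because the identification above is natural with respect to restriction along $\alpha_K$, this vanishing translates into $\alpha_K^*(\theta) = 0$ in the stable $\Hom$ group of $K[t]/t^p$-modules, i.e.\ $\alpha_K^*(\theta)$ factors through a projective. This contradicts the standing assumption on $\alpha_K$, so $m \geq n$.

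There is no serious obstacle; the only thing requiring a little care is the passage between the two representations of the Tate cohomology class (as $\theta : \Omega^m(k) \to \Omega^n(k)$ versus as the map $k \to \Omega^{n-m}(k)$ used in the statement and proof of Proposition \ref{negcoho}). These agree in $\stmod(kE)$ after applying the autoequivalence $\Omega^{-m}$, and restriction along $\alpha_K$ commutes with Heller shifts modulo projectives, so the conclusion transports cleanly.
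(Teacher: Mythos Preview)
Your proof is correct and is precisely the intended argument: the paper itself records this as ``an immediate corollary of Proposition~\ref{negcoho}'' without further detail, and you have simply unpacked that implication via the contrapositive and the standard Tate cohomology identification. Your remark about transporting the conclusion through the Heller shift is a fair point of hygiene but, as you note, causes no difficulty.
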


The following proposition shows that there are limitations on the
Jordan types which can be realized as extensions.

\begin{prop}  
\label{extend2}
Let $G$ be a finite group scheme over a field $k$ 
of characteristic $p>2$ with the
property that every $\pi$-point factors by way of a flat map
through a unipotent abelian group scheme
whose cohomology has Krull dimension at least 2. 
Then there does not exist a $kG$-module $M$ of constant
Jordan type $n[p] + 1[2]$ which is an extension of a $kG$-module of
constant Jordan type $m[p] + 1[1]$ and one of constant Jordan type
$(n-m)[p] + 1[1]$. 
\end{prop}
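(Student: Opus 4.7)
The plan is to argue by contradiction. Suppose $0 \to N_1 \to M \to N_2 \to 0$ is the purported extension, with class $\zeta \in \Ext^1_{kG}(N_2, N_1)$. First I would observe that the pullback of this sequence along any $\pi$-point $\alpha_K$ must be non-split, since a splitting would force $\alpha_K^*(M_K) \simeq \alpha_K^*(N_{1,K}) \oplus \alpha_K^*(N_{2,K})$ to have Jordan type $n[p] + 2[1]$, contradicting the given type $n[p] + 1[2]$. Hence $\alpha_K^*(\zeta) \neq 0$ for every $\pi$-point $\alpha_K$ of $G$.

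By the hypothesis on $G$, every $\pi$-point of $G$ factors through some unipotent abelian finite group scheme $H$ (depending on the $\pi$-point) with $\HHH^\bu(H, k)$ of Krull dimension at least $2$. Fixing such an $H$ with factorization map $f : H \to G$, the restricted class $f^*\zeta \in \Ext^1_{kH}(f^*N_2, f^*N_1)$ is again nonzero at every $\pi$-point of $H$ coming from the factorization. Thus it suffices to show that, for any such $H$ and any class $\eta \in \Ext^1_{kH}(\tilde N_2, \tilde N_1)$ (with $\tilde N_i$ of constant Jordan type $m_i[p] + 1[1]$), some $\pi$-point of $H$ kills $\eta$. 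After extending scalars, $kH \simeq k[T_1, \ldots, T_r]/(T_i^{p^{e_i}})$ with $r \geq 2$, so $\Pi(H)$ has positive dimension.

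In the model case where the $\tilde N_i$ are Heller shifts $\Omega^{2a_i}(k)$, one has
$$\Ext^1_{kH}(\tilde N_2, \tilde N_1) \;\simeq\; \widehat{\HHH}^{1+2(a_2-a_1)}(H, k).$$
If the degree $1 + 2(a_2 - a_1)$ is negative, Proposition~\ref{negcoho} asserts that every such Tate cohomology class restricts to zero at every $\pi$-point, an immediate contradiction. If the degree is positive (necessarily odd), a nonzero class cuts out a non-empty zero locus in the positive-dimensional projective variety $\Pi(H)$ (analyzing restriction at $\pi$-points directly in the case of degree-one odd classes, as for elementary abelian groups where $\HHH^1$ consists of linear forms on the rank variety), again producing a $\pi$-point at which $\eta$ vanishes.

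The hard part is passing from this model case to general $\tilde N_i$ of constant Jordan type $m_i[p]+1[1]$, which need not be Heller shifts of $k$. The plan is first to reduce to indecomposable $\tilde N_i$ via Theorem~\ref{summand}, noting that any module of constant Jordan type $m[p]$ is projective since its $\Pi$-support is empty. One would then analyze an indecomposable module of constant Jordan type $m[p]+1[1]$, with the crucial unresolved point being whether such a module is necessarily stably isomorphic to some $\Omega^{2a}(k)$, or at least whether $\Ext^1_{kH}(\tilde N_2, \tilde N_1)$ admits a $\pi$-point-compatible identification with a Tate cohomology group of $k$ via the module $\tilde N_1 \otimes \tilde N_2^\#$, which has constant Jordan type $d[p]+1[1]$ by Corollary~\ref{tensor} and the Appendix. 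This identification would reduce the problem to the already-treated cohomological vanishing assertions for $\widehat{\HHH}^\bu(H,k)$.
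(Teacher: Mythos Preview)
Your outline is on the right track, and the paper's proof follows essentially the same strategy you sketch in the ``model case''.  The difference is that what you flag as the ``hard part'' --- showing that an indecomposable module of constant Jordan type $m[p]+1[1]$ is stably a Heller shift of $k$ --- is not hard at all, and the paper dispatches it in one line.

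The missing ingredient is Theorem~\ref{endo}: part~(2) of that theorem says precisely that a $kG$-module whose Jordan type at every $\pi$-point is $m[p]+1[1]$ is endotrivial.  Once you have reduced to a unipotent abelian group scheme (in fact the paper reduces further, exactly as in the proof of Proposition~\ref{negcoho}, to an elementary abelian $p$-group $E$ of rank~$2$), Dade's theorem \cite{Da} --- recalled at the start of \S\ref{end} --- gives that every endotrivial $kE$-module is $\Omega^a(k)\oplus\proj$.  So your $\tilde N_1$ and $\tilde N_2$ \emph{are} Heller shifts, and there is no ``general case'' left to treat.

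With that in hand, the paper's endgame is slightly cleaner than your positive/negative degree split.  Since $p>2$ and the stable type is $1[1]$ (not $1[p-1]$), the integers $a,b$ with $L\cong\Omega^a(k)$, $N\cong\Omega^b(k)$ must both be even.  The extension class lives in $\widehat\HHH^{\,b-a+1}(E,k)$ and must be nonzero at every $\pi$-point; but for $E$ elementary abelian of rank~$2$, the only Tate classes with that property are the nonzero scalars in degree~$0$.  Since $b-a+1$ is odd, this is impossible.  Your separate treatment of positive odd degree via zero loci is correct in spirit but unnecessary once the parity observation is made.
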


\begin{proof}
As argued above in the proof of Proposition \ref{negcoho}, we may
assume that $G$ is an elementary abelian $p$-group of rank 2.
Consider a short exact sequence of the form
\begin{equation}
\label{seqqq}
\xymatrix{
E: \qquad 0 \ar[r] & L \ar[r] & M \ar[r] & N \ar[r] & 0
}
\end{equation}
in which both $L$ and $N$ have stable constant Jordan type $1[1]$.
 By Theorem \ref{endo}, $L$ and $N$ are endotrivial 
modules, so that 
$L \cong \Omega^a(k)$, $N \cong \Omega^b(k)$
for some integers $a$ and $b$. 
Since $p>2$, both $a$ and $b$ must be even integers. 

Thus the class
of $(\ref{seqqq})$ is a cohomology class $\zeta$ in 
$$ 
\Ext_{kG}^1(\Omega^b(k), \Omega^a(k)) \cong \widehat{\HHH}^{b-a+1}(G,k).
$$
If $M$ has constant Jordan type $n[p] + 1[2]$, then the restriction
of (\ref{seqqq}) along any $\pi$-point $\alpha_K$ does not split;
equivalently, the restricted class $\alpha_K^*(\zeta)$
does not vanish. However, the only such cohomology classes $\zeta$
are scalar multiples of the identity in degree 
$b-a+1 = 0$.   Since both $a$ and $b$ are even integers, such
classes $\zeta$ can not occur. 
\end{proof}

We now proceed to describe a second method of constructing 
modules of stable constant Jordan
type $n[1]$ that cannot be created by the methods of Proposition \ref{extend1}.
The construction can be summarized as follows: if a 
certain map represented by a matrix  with coefficients in 
$\widehat \HHH^*(G,k)$
has the maximal possible rank when restricted to any $\pi$-point of $G$, 
then it has a kernel of constant Jordan type.

\begin{thm}
 \label{create1}
Let $G$ be a finite group scheme, and choose integers  
$m > n$, $m_i$ and $n_j$ such that 
all $m_i$ and $n_j$ are even if $p > 2$. 
Choose cohomology classes $\zeta_{i,j} \in \widehat\HHH^{m_j-n_i}(G,k)$ and let
$\hat \zeta_{i,j}: \Omega^{m_j}(k) \to \Omega^{n_i}(k)$ 
represent $\zeta_{i,j}$.
We consider an exact sequence
$$
\xymatrix{
0 \ar[r] & L \ar[r] & M \ar[rr]^{\varphi = 
(\hat \zeta_{i,j})} && N \ar[r] & 0
}
$$
of $kG$-modules
where
$$
M \ \cong \sum_{j = 1}^m \Omega^{m_j}(k) \oplus \proj
\quad \text{and} \quad N \cong \sum_{i = 1}^n \Omega^{n_i}(k).
$$
Assume that for every $\pi$-point
$\xymatrix{\alpha_K: K[t]/t^p \ar[r] & KG}$,
the restriction of the matrix of cohomology elements
$(\alpha_K^*(\zeta_{i,j})) \in M_{n,m}(\widehat\HHH^*(K[t]/t^p,K))$
has rank $n$.  Then the module $L$ has stable constant Jordan type 
$(m-n)[1]$.
\end{thm}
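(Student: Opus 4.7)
The plan is to show, for any $\pi$-point $\alpha_K\colon K[t]/t^p \to KG$, that $\alpha_K^*(L_K)$ is stably isomorphic to $K^{\oplus(m-n)}$ in $\stmod(K[t]/t^p)$, i.e., has stable Jordan type $(m-n)[1]$. Since this is independent of $\alpha_K$, the conclusion then follows.

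First I would pull back the given short exact sequence along $\alpha_K$; flatness yields
\begin{equation*}
0 \to \alpha_K^*(L_K) \to \alpha_K^*(M_K) \xrightarrow{\alpha_K^*(\varphi)} \alpha_K^*(N_K) \to 0.
\end{equation*}
For $K[t]/t^p$ the Heller shift has period $2$ on the trivial module when $p>2$ and period $1$ when $p=2$, so Proposition \ref{heller} together with the parity hypothesis yields $\alpha_K^*(\Omega^{m_j}(k)_K) \simeq K$ and $\alpha_K^*(\Omega^{n_i}(k)_K) \simeq K$ in the stable category. Discarding the projective summand of $M$, this gives $\alpha_K^*(M_K) \simeq K^{\oplus m}$ and $\alpha_K^*(N_K) \simeq K^{\oplus n}$ in $\stmod(K[t]/t^p)$.

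Next I would identify $\alpha_K^*(\varphi)$ in the stable category with a matrix over $K$. Its $(i,j)$-entry is represented by $\alpha_K^*(\hat\zeta_{i,j})$, corresponding to $\alpha_K^*(\zeta_{i,j}) \in \widehat\HHH^{m_j-n_i}(K[t]/t^p, K)$. The even Tate cohomology of $K[t]/t^p$ is a graded Laurent polynomial ring in a single periodicity generator $u$, and since the entry degrees $m_j - n_i$ decompose additively as $(-n_i) + m_j$, multiplying the $i$-th row of the matrix by $u^{n_i}$ and the $j$-th column by $u^{-m_j}$ normalizes every entry into $\widehat\HHH^0 = \Hom_{\stmod}(K,K) = K$ without altering the rank. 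The hypothesis then says the resulting matrix $A_K \in M_{n,m}(K)$ has rank $n$, hence admits a right inverse $B_K \in M_{m,n}(K)$, which represents a stable section of $\alpha_K^*(\varphi)$.

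A section in the triangulated category $\stmod(K[t]/t^p)$ forces the distinguished triangle of our exact sequence to split, yielding $\alpha_K^*(L_K) \simeq K^{\oplus(m-n)}$ stably, which is precisely stable Jordan type $(m-n)[1]$. The hardest step I anticipate is the graded normalization just described: translating the hypothesis ``rank $n$ over $\widehat\HHH^*(K[t]/t^p,K)$'' into a rank condition on an honest matrix over $K$. The separable form of the entry degrees combined with the periodicity of Tate cohomology of a cyclic $p$-group is what makes this work, but it deserves careful justification.
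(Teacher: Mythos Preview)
Your proof is correct and follows essentially the same approach as the paper: restrict along a $\pi$-point, use the parity hypothesis so that each $\alpha_K^*(\Omega^{m_j}(k))$ and $\alpha_K^*(\Omega^{n_i}(k))$ is stably $K$, identify the restricted map with a rank-$n$ matrix over $K$, and conclude that the kernel has stable type $(m-n)[1]$. The only difference is packaging: you phrase the last step as splitting a distinguished triangle in $\stmod(K[t]/t^p)$ via a stable section, whereas the paper works with explicit module-level splittings $K \hookrightarrow \alpha_K^*(\Omega^s(k)) \twoheadrightarrow K$ and a rank count on the composite $K^m \to K^n$; also note that with the periodicity generator $u$ in degree $2$ (for $p>2$) your row and column multipliers should be $u^{n_i/2}$ and $u^{-m_j/2}$ rather than $u^{n_i}$ and $u^{-m_j}$.
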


\begin{proof}
Let $\xymatrix{\alpha_K: K[t]/t^p \ar[r] & KG}$ be a 
$\pi$-point. For any $s$, which is even if $p > 2$, 
the restriction of the module $\Omega^s(k)$ 
along $\alpha_K$ has the form 
$\alpha_K^*(\Omega^s(K)) \cong K \oplus \proj$. Moreover, 
if we have a map $\xymatrix{\zeta: \Omega^s(k) \ar[r] & \Omega^t(k)}$ 
whose cohomology class 
when restricted along $\alpha_K$ is not zero, then the composition 
$$
\xymatrix{
K \ar[r]^{\iota \qquad} & \alpha_K^*(\Omega^s(K)) 
\ar[rr]^{\alpha_K^*(\zeta_K)} && 
\alpha_K(\Omega^t(K)) \ar[r]^{\qquad \rho} & K,
}
$$ 
where $\iota$ and $\rho$ are the split inclusion and projection 
maps, is an isomorphism. Thus our hypothesis asserts 
that the composition
$$
\xymatrix{
K^m \ar[r]^{\iota \quad} & \alpha_K^*(M_K) \ar[rr]^{\alpha^*(\varphi_K)} && 
\alpha_K^*(N_K) \ar[r]^{\quad \rho} & K^n
}
$$ 
has rank $n$.   Since $\alpha_K^*(M_K) = \proj +m[1]$ and
$\alpha_K^*(N_K) = \proj +n[1]$, it
follows that the kernel of $\varphi$, $\alpha_K^*(L)$, has
stable Jordan type $(m-n)[1]$. As this happens for any $\alpha_K$, 
we are done. 
\end{proof}

Theorem \ref{create1} is stated for even dimensional cohomology classes
(for $p > 2$), yet as we observe in the following proposition a very similar
argument can also be applied to odd dimensional classes to yield
additional modules of constant Jordan type.

\begin{prop}
\label{create2}

Let $G$ be a finite group scheme, and choose positive integers $m > n$,
$m_1,\ldots m_m$ all odd,  $n_1,\ldots,n_n$ all even. Assume $p>2$. 
Choose cohomology classes $\zeta_{i,j} \in \widehat\HHH^{m_j-n_i}(G,k)$ and let
$\hat \zeta_{i,j}: \Omega^{m_j}(k) \to \Omega^{n_i}(k)$ 
represent $\zeta_{i,j}$.
We consider an exact sequence
$$
\xymatrix{
0 \ar[r] & L \ar[rr] && M \ar[rr]^{\varphi = 
(\hat \zeta_{i,j})} && N \ar[r] & 0
}
$$
of $kG$-modules
where
$$
M \ \cong \sum_{j = 1}^m \Omega^{m_j}(k) \oplus \proj
\quad \text{and} \quad N \cong \sum_{i = 1}^n \Omega^{n_i}(k).
$$
Assume that for every $\pi$-point
$\xymatrix{\alpha_K: K[t]/(t^p) \ar[r] & KG}$,
the restriction of the matrix of cohomology elements
$\alpha_K^*(\zeta_{i,j}) \in M_{n,m}(\widehat\HHH^*(K[t]/t^p,K))$
has rank $n$.  Then the module $L$ has stable constant
Jordan type $(m-n)[p-1] + n[p-2]$.
\end{prop}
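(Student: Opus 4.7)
My plan is to mimic the local argument of Theorem~\ref{create1}, replacing the even-degree picture (where $\alpha_K^*(\Omega^s(K))$ contains the trivial summand $[1]$) with the odd-degree analogue (where it contains the summand $[p-1]$). First I would fix an arbitrary $\pi$-point $\alpha_K \colon K[t]/t^p \to KG$ and restrict the given short exact sequence to
\[
0 \to \alpha_K^*(L) \to \alpha_K^*(M_K) \xrightarrow{\alpha_K^*(\varphi)} \alpha_K^*(N_K) \to 0.
\]
Because every $m_j$ is odd and every $n_i$ is even, Proposition~\ref{heller} tells us that $\alpha_K^*(M_K)$ has stable Jordan type $m[p-1]$ and $\alpha_K^*(N_K)$ has stable Jordan type $n[1]$. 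Next I would identify the stable class of $\alpha_K^*(\varphi)$: since $\Omega_{K[t]/t^p}(K) \simeq [p-1]$, one has $\widehat\HHH^{m_j-n_i}(K[t]/t^p, K) \simeq \Hom_{\stmod}([p-1], [1])$, which for $p > 2$ is one-dimensional with canonical generator the ``top projection'' $[p-1] \twoheadrightarrow [p-1]/t[p-1] \simeq [1]$. Thus the stable class of $\alpha_K^*(\varphi)$ is represented by a scalar $n \times m$ matrix $C = (c_{ij})$ acting on top quotients, and the hypothesis forces $C$ to have rank $n$.

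The heart of the argument will be an explicit module-level computation of a model kernel. I would introduce $\varphi' \colon m[p-1] \to n[1]$ sending each top generator $v_j$ to $\sum_i c_{ij} w_i$ (and thereby killing $t\cdot m[p-1]$). Then, choosing $u_1, \ldots, u_{m-n} \in m[p-1]$ lifting a basis of $\ker C$ and $\tilde z_1, \ldots, \tilde z_n$ lifting a complement, I would verify directly that the cyclic submodules $\langle u_\ell\rangle \simeq [p-1]$ and $\langle t\tilde z_i\rangle \simeq [p-2]$ are linearly independent and that their dimensions sum to $m(p-1) - n = \dim \ker \varphi'$. This would yield
\[
\ker \varphi' \;\simeq\; (m-n)[p-1] \;\oplus\; n[p-2].
\]

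Finally, I would observe that $\alpha_K^*(\varphi)$ and $\varphi'$ (after stripping off projective summands from source and target) represent the same class in $\Hom_{\stmod}(m[p-1], n[1])$, so they differ by a map factoring through a projective. A standard argument using the triangulated structure of $\stmod(K[t]/t^p)$ then shows that their kernels differ only by a projective summand. Hence $\alpha_K^*(L)$ has stable Jordan type $(m-n)[p-1] + n[p-2]$ for every $\pi$-point $\alpha_K$, and $L$ has constant stable Jordan type as claimed. The main obstacle is the explicit direct-sum decomposition of $\ker \varphi'$ — in particular, checking that the $t\tilde z_i$'s together with the $t^k u_\ell$'s ($k \geq 1$) span all of $t\cdot m[p-1]$, which amounts to the observation that the $u_\ell$'s and the $\tilde z_i$'s together span the top. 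Combined with the invariance-under-projective-correction at the end, this delivers the conclusion.
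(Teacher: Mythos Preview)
Your proposal is correct and follows essentially the same strategy as the paper's proof: restrict along a $\pi$-point, observe that the hypothesis forces the induced map $m[p-1] \to n[1]$ to be surjective, and compute that any such surjection has kernel $(m-n)[p-1] + n[p-2]$. The paper compresses this into three sentences, whereas you spell out the kernel computation explicitly and justify the passage from $\alpha_K^*(\varphi)$ to the model map $\varphi'$ via the triangulated structure of $\stmod(K[t]/t^p)$; both are legitimate, and your version simply fills in details the paper leaves to the reader.
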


\begin{proof}
Our hypothesis implies that the restriction of 
$(\hat \zeta_{i,j})$ via any $\alpha_K$
is a map of $K[t]/t^p$-modules which remains surjective after free
summands are dropped.  We write such a map of $K[t]/t^p$-modules
symbolically as a map of their (stable) Jordan types
$m[p-1] \twoheadrightarrow n[1]$.  Such a surjective map of
$K[t]/t^p$-modules with indicated stable Jordan type necessarily
has kernel with Jordan type $(m-n)[p-1] + n[p-2]$.
\end{proof}

In general, it is very easy to construct examples for which Theorem
\ref{create1} and Proposition \ref{create2} are relevant.  These 
modules are multi-parametrized versions of important modules
introduced and studied by the first author.

\begin{ex}
\label{gencarl}
Let $\xi_1. \dots, \xi_r$ be homogeneous elements in $\HHH^\bu(G,k)$ 
such that  the radical of the ideal generated by the $\xi_i$'s is the 
augmentation ideal of $\HHH^\bu(G,k)$.   Alternatively, for $p$ odd,
let $\xi_1. \dots, \xi_r$ be homogeneous elements in $\HHH^*(G,k)$
of odd degree whose Bocksteins generate an ideal of  $\HHH^\bu(G,k)$
whose radical is the augmentation ideal.
For each $i$, let $\xymatrix{\hat\xi_i: \Omega^{n_i}(k)
\ar[r] & k}$ be a cocycle representing $\xi_i$, where $n_i$ is the 
degree of $\xi_i$.  Define 
$$\varphi \, : \,  \bigoplus_{i=1}^r \Omega^{n_i}(k)
\to k
$$  by the formula 
$\varphi(a_1, \dots, a_r) = \hat\xi_1(a_1) + \dots + \hat\xi_r(a_r)$. 
The condition that either the $\xi_i$'s or the Bockstein's of the $\xi_i$'s
generate an ideal of  $\HHH^\bu(G,k)$
whose radical is the augmentation ideal implies that for each $\pi$-point
$\alpha_K$, there is some $i$ such that $\alpha_K^*(\hat\xi) \not= 0$.
(Namely, no point $[\alpha_K] \in \Pi(G) \cong \Proj \HHH^\bu(G,k)$ lies in
the union of the zero loci of either the $\xi_i$'s or the Bockstein's of 
the $\xi_i$'s.)  Thus,
\begin{equation}
\label{phi}
\xymatrix{
L_{\xi_1, \dots, \xi_r} ~ \equiv
~\Ker \varphi 
}
\end{equation}
has constant Jordan type.
\end{ex}

In Theorem \ref{not-endo} below, we give an explicit example of
such an $L_{\xi_1, \dots, \xi_r}$ which can not be constructed 
using the technique of Proposition \ref{extend1}.  The detailed
verification of this example will occupy the remainder of this
section, and involves the following four lemmas.  

 Recall that if 
$E \simeq (\Z/p\Z)^{\times r}$ is an elementary 
abelian $p$-group of rank $r$, then 
\begin{equation}
\label{orderp}
\HHH^*(E,k) \cong 
\begin{cases} k[\zeta_1, \dots, \zeta_r] \otimes  
\Lambda(\eta_1, \dots, \eta_r) 
\quad & \text{if} \quad p > 2 \\
k[ \eta_1, \dots, \eta_r] \quad & \text{if} \quad p=2. \end{cases}
\end{equation} 
where each $\eta_j$ has degree
one and each $\zeta_i$ has degree 2. Here, 
$\Lambda(\eta_1, \dots, \eta_r)$ is the exterior algebra on the generators 
$\eta_1, \dots, \eta_r$.

\begin{lemma} 
\label{dim-omega}
Let $E$ be an elementary abelian $p$-group of rank $r$, for $r>1$. 
For $n > 0$, 
\begin{enumerate}
\item  $\Dim  \HHH^n(E,k) ~ = ~ \binom{n+r-1}{r-1}$. 
\item $\Dim P_n ~ = ~ \binom{n+r-1}{r-1} \cdot p^r$, where $P_n$ is the
$n$-th term of a minimal $kE$-projective resolution of $k$.
\item $\Dim \Omega^n(k) ~ =  ~p^r \cdot a_{r,n}+(-1)^n, ~ n > 0$
where 
$$
a_{r,n} = \binom{n+r-2}{r-1} - \binom{n+r-3}{r-1}
+ \dots + (-1)^{n-1}\binom{r-1}{r-1}. 
$$
\item 
If $r = 2$, then $\Dim(\Omega^{2n}(k)) ~ = ~p^2n+1$.
\item   If $r =3$, then $\Dim(\Omega^{2n}(k)) =  p^3n(n+1) +1$
\end{enumerate} 
Finally, if $n <0$, then $\Dim(\Omega^n(k)) = \Dim(\Omega^{-n}(k)).$
\end{lemma}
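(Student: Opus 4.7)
The plan is to deduce all five dimension formulas together with the final duality identity from the structure of a minimal projective resolution $P_\bu \to k$ of $kE$-modules, together with the description of $\HHH^*(E,k)$ in (\ref{orderp}).

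For part (1), the K\"unneth formula applied to (\ref{orderp}) shows that the Poincar\'e series of $\HHH^*(E,k)$ equals $1/(1-x)^r$: when $p>2$, each tensor factor contributes $(1+x)/(1-x^2)=1/(1-x)$, and when $p=2$ each factor is already $1/(1-x)$. Extracting the coefficient of $x^n$ yields $\Dim \HHH^n(E,k)=\binom{n+r-1}{r-1}$. Part (2) then follows from (1) together with minimality of $P_\bu$: minimality forces the boundary maps to land in the radical, so $P_n\cong (kE)^{\oplus b_n}$ with $b_n=\Dim \HHH^n(E,k)$, and $\Dim kE=p^r$.

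For (3), I would iterate the short exact sequences $0\to\Omega^{i+1}(k)\to P_i\to\Omega^i(k)\to 0$ arising from $P_\bu$ to obtain the telescoping identity
$$\Dim\Omega^n(k)=\sum_{i=0}^{n-1}(-1)^{n-1-i}\Dim P_i+(-1)^n.$$
Substituting (2) and reindexing with $j=n-1-i$ produces exactly $p^r a_{r,n}+(-1)^n$.

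For (4) and (5), I would specialize $r$ and collapse the alternating sum $a_{r,n}$ by pairing consecutive terms. For $r=2$ each pair $\binom{n+1-j}{1}-\binom{n-j}{1}$ equals $1$, and pairing gives $a_{2,2n}=n$. For $r=3$ and $n$ replaced by $2n$, each pair $\binom{2n+1-j}{2}-\binom{2n-j}{2}$ is linear in $j$, and the resulting arithmetic progression $2n+(2n-2)+\cdots+2$ sums to $n(n+1)$. Substituting into (3) yields the stated explicit formulas. Finally, for $n<0$ I would invoke the fact that $kE$ is a self-injective Hopf algebra, so the $k$-linear dual $(-)^\#$ preserves projectivity and interchanges $\Omega^n(k)$ with $\Omega^{-n}(k)$ in $\stmod(kE)$; since $\Dim M=\Dim M^\#$, this gives $\Dim\Omega^{-n}(k)=\Dim\Omega^n(k)$. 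The only real obstacle is the bookkeeping in the alternating binomial sum of (3); the pairing trick used for $r=2,3$ is what makes the closed forms in (4) and (5) clean.
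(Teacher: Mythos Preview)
Your argument is correct and follows essentially the same route as the paper: compute $\dim\HHH^n(E,k)$ from the known ring structure, read off $\dim P_n$ from minimality, obtain $\dim\Omega^n(k)$ by an Euler-characteristic/telescoping argument along the resolution, and then specialize to $r=2,3$. You supply considerably more detail than the paper does (the Poincar\'e-series computation, the explicit pairing of the alternating binomial sums, and the duality argument for $n<0$, which the paper does not even mention), but the underlying strategy is identical.
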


\begin{proof}
The computation of $\Dim  \HHH^n(E,k)$ is a straightforward and 
familiar computation.  In a minimal 
projective $kE$-resolution of $k$,
$$
\xymatrix{
\dots \ar[r] & P_2 \ar[r]^{\partial_2} & P_1 \ar[r]^{\partial_1} &
P_0 \ar[r]^{\varepsilon} & k \ar[r] & 0,
}
$$
each $P_n$ is a direct sum of copies of $kE$, the number of copies
equal to the dimension of $\HHH^n(E,k)$. 
Using the vanishing of the Euler
characteristic of an exact sequence, we conclude the asserted
formula for the dimension of $\Omega^n(k)$.
The assertions for $r=2, 3$ are special cases.
 \end{proof}

\begin{lemma}
\label{proj-dim}
Let $E$ be an elementary abelian $p$-group of rank $3$, 
$n$ be a negative integer, and $\xymatrix{Q \ar[r] & \Omega^{2n} (k)}$ 
 be the 
projective cover of the $2n^{\rm th}$ Heller shift of $k$. 
Then $\Dim Q  = p^3(2n^2 -n)$.
\end{lemma}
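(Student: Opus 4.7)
The plan is to reduce $\Dim Q$ to the dimension of the top of $\Omega^{2n}(k)$ and then compute that via duality. Since $kE$ is local with unique simple module $k$ and $\Dim(kE) = p^3$, the projective cover of any $kE$-module $M$ is isomorphic to $(kE)^{\dim M/\Rad M}$, so $\Dim Q = p^3 \cdot \dim(\text{top of } \Omega^{2n}(k))$. Because $kE$ is a symmetric algebra, $k$-linear duality $(-)^* = \Hom_k(-,k)$ sends projectives to projectives and interchanges Heller operators: dualizing a minimal projective resolution of $k$ produces a minimal injective (= projective) resolution and shows $\Omega^j(k)^* \cong \Omega^{-j}(k)$ for every $j \in \bZ$. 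Since duality swaps top and socle,
$$
\dim(\text{top of } \Omega^{2n}(k)) ~ = ~ \dim \soc(\Omega^{-2n}(k)).
$$
Setting $m = -2n > 0$, the task reduces to computing $\dim \soc(\Omega^m(k))$.

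The key observation is that $\soc(\Omega^m(k)) = \soc(P_{m-1})$, where $P_\bu \to k$ is the minimal projective resolution, so that $\Omega^m(k) = \ker(P_{m-1} \to P_{m-2}) \subset P_{m-1}$. The inclusion $\soc(\Omega^m(k)) \subset \soc(P_{m-1})$ is automatic. For the reverse, minimality of the resolution forces the differential $P_{m-1} \to P_{m-2}$ to be given by a matrix with entries in the augmentation ideal $I = \Rad(kE)$. Since $I \cdot \soc(kE) = 0$ (because $\Rad(kE)$ has nilpotence length $3(p-1)+1$ and $\soc(kE) = \Rad^{3(p-1)}(kE)$), this differential annihilates $\soc(P_{m-1})$, placing $\soc(P_{m-1}) \subset \ker(P_{m-1} \to P_{m-2}) = \Omega^m(k)$ and hence in $\soc(\Omega^m(k))$.

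Finally, by minimality $P_{m-1} \cong (kE)^{\dim \HHH^{m-1}(E,k)}$, so $\dim \soc(P_{m-1}) = \dim \HHH^{m-1}(E,k) = \binom{m+1}{2}$ by Lemma \ref{dim-omega}(1) with $r = 3$. Substituting $m = -2n$ yields $\binom{-2n+1}{2} = \tfrac{(-2n)(-2n+1)}{2} = n(2n-1) = 2n^2 - n$, and therefore $\Dim Q = p^3(2n^2 - n)$, as claimed. The only genuinely subtle step is the socle identification in the middle paragraph, and it rests purely on the elementary principle that differentials in a minimal projective resolution over a local algebra take values in the radical.
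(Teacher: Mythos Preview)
Your proof is correct and follows essentially the same route as the paper's. The paper dualizes the projective cover $Q \to \Omega^{2n}(k)$ to an injection $\Omega^{-2n}(k) \hookrightarrow Q^*$, identifies $Q^*$ as the injective hull and hence as $P_{-2n-1}$, and reads off the dimension from Lemma~\ref{dim-omega}(2); your argument unwinds this same identification through the equalities $\dim(\text{top of }\Omega^{2n}(k)) = \dim\soc(\Omega^{-2n}(k)) = \dim\soc(P_{-2n-1})$, supplying the explicit justification (differentials in a minimal resolution land in the radical) that the paper leaves implicit.
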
 

\begin{proof}
Taking duals we have an injection
$\xymatrix{\Omega^{-2n} (k) \ar[r] & Q^*}$, 
where $Q^*$ is the injective hull of $\Omega^{-2n} (k)$. 
Hence, $Q^* \simeq P_{-2n-1}$,  the $(-2n-1)^{\rm st}$
term of the minimal projective 
resolution of $k$. As seen in 
Lemma~\ref{dim-omega}(2), the dimension of $P_{-2n-1}$ 
is $ p^3\binom{-2n-1+2}{2} =  p^3(2n^2 -n)$.
\end{proof}

\begin{lemma} \label{syzygy-onto}
Suppose that $E$ is an elementary abelian $p$-group
of rank at least 2.  Let 
$$
\xymatrix{\theta: \Omega^m(k) \ar[r] & \Omega^n(k)}
$$ 
be a homomorphism for some nonnegative integers $m$ and $n$, 
which are both even in case $p>2$. Assume that for some 
$\pi$-point $\xymatrix{\alpha_K:K[t]/(t^p) \ar[r] & KE}$
the restriction along $\alpha$ of $\theta$ does not factor
through a projective $K[t]/(t^p)$-module. Then $\theta$ is surjective.
\end{lemma}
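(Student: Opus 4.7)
The plan is to identify $\theta$ with a cohomology class $\zeta$ and reduce surjectivity to the fact that $\zeta$ is a non-zero-divisor in $\HHH^*(E,k)$. By Corollary~\ref{syzygy} the hypothesis forces $m \geq n$, and under the standard identification $\Hom_{\stmod kE}(\Omega^m(k), \Omega^n(k)) \cong \widehat{\HHH}^{m-n}(E,k) = \HHH^{m-n}(E,k)$ the map $\theta$ corresponds to a class $\zeta$ satisfying $\alpha_K^*(\zeta) \neq 0$. The case $n = 0$ is immediate because $\Omega^0(k) = k$ is simple, so a nonzero $\theta$ is automatically surjective; hence assume $n \geq 1$.

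Next I would argue by contradiction. If $\theta$ is not surjective then Nakayama's lemma produces a nonzero $\phi \in \Hom_{kE}(\Omega^n(k), k)$ with $\phi \circ \theta = 0$. A dimension count (both sides equal $\dim \HHH^n(E,k) = \dim \operatorname{top}\Omega^n(k)$) shows the natural surjection $\Hom_{kE}(\Omega^n(k), k) \twoheadrightarrow \HHH^n(E,k)$ is an isomorphism for $n \geq 1$, so $\phi$ represents a nonzero class $\bar\phi$. Composition in the stable category corresponds to the Yoneda/cup product, so $\phi \circ \theta = 0$ gives $\bar\phi \cdot \zeta = 0$ in $\HHH^m(E,k)$. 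It thus suffices to prove that multiplication by $\zeta$ is injective on $\HHH^*(E,k)$.

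For this injectivity I would invoke the explicit description (\ref{orderp}). For $p = 2$, $\HHH^*(E, k) = k[\eta_1, \ldots, \eta_r]$ is an integral domain and the statement is trivial. For $p > 2$, decompose $\zeta = \zeta_\emptyset + \zeta'$ with $\zeta_\emptyset$ in the polynomial subring $A = k[\zeta_1, \ldots, \zeta_r]$ and $\zeta'$ in the nilpotent ideal $I = (\eta_1, \ldots, \eta_r)$. Because $\zeta$ has even degree, $\zeta'$ is a sum of terms involving $\eta_S$ with $|S| \geq 2$; since any $\pi$-point sends each $\eta_i$ to a scalar multiple of $\eta$ and $\eta^2 = 0$, every such term is killed by the restriction $\alpha_K^*$. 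Hence $\alpha_K^*(\zeta) = \alpha_K^*(\zeta_\emptyset)$, and the hypothesis forces $\zeta_\emptyset \neq 0$ in the domain $A$. A leading-term argument using the $I$-adic filtration, together with the fact that $\HHH^*(E,k)$ is a free $A$-module of rank $2^r$ with basis $\{\eta_S\}_{S \subseteq \{1,\ldots,r\}}$ (so each $I^d/I^{d+1}$ is $A$-free), shows that the leading term of $\zeta\xi$ in $I^d/I^{d+1}$ equals $\zeta_\emptyset \bar\xi$ for any $\xi$ with leading term $\bar\xi$; this is nonzero since $A$ is a domain and $I^d/I^{d+1}$ is $A$-free, so $\zeta\xi \neq 0$.

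The main obstacle is the careful bookkeeping between the module category and the stable category—ensuring that $\phi \circ \theta = 0$ as an honest module map yields $\bar\phi \cup \zeta = 0$ in $\HHH^m(E,k)$, and that for $n \geq 1$ a nonzero $\phi \in \Hom_{kE}(\Omega^n(k), k)$ indeed represents a nonzero cohomology class—together with isolating the polynomial part $\zeta_\emptyset$ from the hypothesis so that the non-zero-divisor argument applies in the presence of the exterior factors when $p > 2$.
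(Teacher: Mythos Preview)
Your argument is correct and follows essentially the same route as the paper's proof: use Corollary~\ref{syzygy} to get $m\geq n$, identify $\theta$ with a class in $\HHH^{m-n}(E,k)$ that is a non-zero-divisor (the paper phrases this as ``non-nilpotent, hence multiplication is injective''), and deduce surjectivity of $\theta$ from injectivity of $\theta^{*}\colon \Hom_{kE}(\Omega^{n}(k),k)\to\Hom_{kE}(\Omega^{m}(k),k)$. Your contrapositive via Nakayama and your explicit $I$-adic leading-term verification of the non-zero-divisor property simply spell out what the paper leaves implicit.
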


\begin{proof}
By Corollary~\ref{syzygy}, we have $m-n \geq 0$. 
Let $\widehat{\theta}$ denote the 
cohomology class of $\theta$ in 
$$\Ext_{kE}^{m-n}(k,k) \cong \HHH^{m-n}(E,k).
$$  
The condition on the restriction of 
$\theta$ along $\alpha$ together with the  
fact that $m-n$ is even if $p>2$ implies that $\widehat{\theta}$ is a 
non-nilpotent element of $\HHH^{m-n}(E,k)$.  Hence, 
multiplication by $\widehat\theta$ induces an injective map 
$\xymatrix{\HHH^{n}(E,k) \ar[r] & \HHH^m(E,k)}$.  Since
$\Hom_{kE}(\Omega^\ell(k),k)
\cong \HHH^\ell(E,k)$ for $\ell \geq 0$, it follows that 
$\theta$ induces an injective map 
$$
\xymatrix{
\theta^\prime: \Hom_{kE}(\Omega^n(k),k) \ar[r] & 
\Hom_{kE}(\Omega^m(k),k)}.
$$
So $\theta$ must be surjective.
\end{proof}

\begin{lemma}\label{rk2}
Suppose that $E$ is an elementary abelian $p$-group of rank $2$.
Let $\xi_1 \in \HHH^{2m}(E,k)$, $\xi_2 \in \HHH^{2n}(E,k)$, 
and assume that the radical of 
the ideal generated by  $\xi_1, \xi_2$ 
is the augmentation ideal of $\HHH^\bu(E,k)$. 
Consider the exact sequence 
$$
\xymatrix{
0 \ar[r] & L_{\xi_1, \xi_2} \ar[r] & \Omega^{2m}(k) \oplus \Omega^{2n}(k) 
\ar[r]^{\qquad \quad 
\scriptsize{\begin{pmatrix} \xi_1 \\ \xi_2 \end{pmatrix}}} & k \ar[r] &0.
}
$$
Then $L_{\xi_1,\xi_2} \simeq \Omega^{2m+2n}(k)$.
\end{lemma}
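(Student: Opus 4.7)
The plan is to combine Theorem~\ref{create1} (to produce constant Jordan type from cohomological data), Theorem~\ref{endo} together with the classification of endotrivial $kE$-modules as Heller shifts of $k$ (recalled immediately before that theorem), the dimension formulas of Lemma~\ref{dim-omega}, and a Koszul-type lift to pin down the specific Heller shift.

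First I would apply Theorem~\ref{create1} to $\varphi=(\hat\xi_1,\hat\xi_2):\Omega^{2m}(k)\oplus\Omega^{2n}(k)\to k$. The radical hypothesis guarantees that at every $\pi$-point $\alpha_K$ of $E$ at least one of $\alpha_K^*(\xi_1),\alpha_K^*(\xi_2)$ is nonzero, so the cohomological matrix has rank $1$ everywhere. Hence $L_{\xi_1,\xi_2}$ has stable constant Jordan type $1[1]$, and by Theorem~\ref{endo} combined with the cited classification, $L_{\xi_1,\xi_2}\cong\Omega^{2s}(k)\oplus P$ for some $s\in\bZ$ and some projective $P$. A direct dimension computation using Lemma~\ref{dim-omega}(4) yields
$$\Dim L_{\xi_1,\xi_2}=(p^2m+1)+(p^2n+1)-1=p^2(m+n)+1=\Dim\Omega^{2(m+n)}(k),$$
which forces $|s|+\Dim P/p^2=m+n$ and in particular $|s|\le m+n$.

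Next I would construct a Koszul-type lift $\phi:\Omega^{2(m+n)}(k)\to L_{\xi_1,\xi_2}$. Represent $\xi_1$ and $\xi_2$ by chain-level maps $\tilde\xi_1:\Omega^{2(m+n)}(k)\to\Omega^{2n}(k)$ and $\tilde\xi_2:\Omega^{2(m+n)}(k)\to\Omega^{2m}(k)$, and set $\Psi=(\tilde\xi_2,-\tilde\xi_1)$. The composite $\varphi\circ\Psi$ represents $\xi_1\xi_2-\xi_2\xi_1$, which vanishes because both classes have even degree; hence $\varphi\Psi$ factors through some projective. Since $\varphi$ is a surjection of modules (one of $\hat\xi_1,\hat\xi_2$ must surject onto the simple module $k$), I can absorb this factorization by altering $\Psi$ by a map through a projective, arranging $\varphi\Psi=0$ on the nose, so $\Psi$ factors through $L_{\xi_1,\xi_2}$ and gives $\phi$.

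Finally I would verify that $\phi$ is a stable isomorphism, which is the main obstacle. For any $\pi$-point $\alpha_K$, restriction of the defining short exact sequence identifies the trivial stable summand of $\alpha_K^*(L_{\xi_1,\xi_2})$ with the kernel line in $K^2$ spanned by $(c_2,-c_1)$, where $c_i=\alpha_K^*(\xi_i)$; the map $\alpha_K^*(\Psi)$ on the trivial summand of the source hits exactly this line (nonzero by hypothesis), so $\alpha_K^*(\phi)$ is a stable isomorphism between two $K[t]/t^p$-modules of stable type $1[1]$. By detection of projectivity via $\pi$-points (\cite[5.4]{FP2}), the stable cone of $\phi$ is projective, so $\phi$ is itself a stable isomorphism. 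This pins $s=m+n$, the dimension count then forces $P=0$, and Krull--Schmidt yields $L_{\xi_1,\xi_2}\cong\Omega^{2(m+n)}(k)$. The delicate point throughout is ensuring that the Koszul construction hits exactly the trivial summand of $\alpha_K^*(L_{\xi_1,\xi_2})$ rather than merely being stably nonzero.
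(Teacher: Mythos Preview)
Your argument is correct and takes a genuinely different route from the paper in the second half.  Both the paper and you begin the same way: Theorem~\ref{create1} gives stable constant Jordan type $1[1]$, Theorem~\ref{endo} plus Dade's classification forces $L_{\xi_1,\xi_2}$ to be (stably) a single Heller shift of $k$, and the dimension formula of Lemma~\ref{dim-omega}(4) pins the shift down to $\pm 2(m+n)$.  The divergence is in how the sign is resolved.  The paper argues by contradiction: if $L_{\xi_1,\xi_2}\simeq\Omega^{-2(m+n)}(k)$ then the defining extension represents a nonzero class $\gamma\in\HHH^{2m+2n+1}(E,k)$ whose restriction to every $\pi$-point vanishes, and then asserts that no such class exists in positive odd degree.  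You instead build a Koszul-type map $\Psi=(\tilde\xi_2,-\tilde\xi_1):\Omega^{2(m+n)}(k)\to\Omega^{2m}(k)\oplus\Omega^{2n}(k)$, correct it so that it lands in $L_{\xi_1,\xi_2}$, and verify it is a stable isomorphism by testing on $\pi$-points.  Your ``delicate point'' is in fact harmless: since $\alpha_K^*(\Psi)$ is stably nonzero (its image in the trivial summand $K^2$ is $(c_2,-c_1)\neq 0$) and factors through $\alpha_K^*(\phi)$, the latter is automatically stably nonzero, hence a stable isomorphism between two modules of stable type $1[1]$.

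What each approach buys: the paper's route is shorter and avoids constructing any auxiliary map, but its final nonexistence assertion is delicate for $p>2$ (for instance $\eta_1\zeta_2-\eta_2\zeta_1$ and its polynomial multiples are nonzero odd-degree classes that restrict to zero on every cyclic shifted subgroup), so some additional argument---e.g.\ via Corollary~\ref{syzygy} applied to the components of the inclusion $L_{\xi_1,\xi_2}\hookrightarrow\Omega^{2m}(k)\oplus\Omega^{2n}(k)$---is implicitly needed.  Your constructive approach sidesteps this entirely, is self-contained, and exhibits the isomorphism rather than merely asserting its existence; it also generalises naturally to longer regular sequences.
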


\begin{proof}
The condition on $\xi_1$, $\xi_2$ is equivalent 
to the condition that  the matrix 
$(\xi_1, \xi_2)$
has rank 1 when restricted to any $\pi$-point of $G$.  
Hence, by Theorem~\ref{create1}, $L_{\xi_1,\xi_2}$ has 
stable constant Jordan type $1[1]$, and it is an endotrivial 
module. By Lemma \ref{dim-omega}, the dimension of $L_{\xi_1,\xi_2}$
is $p^2(m+n) +1$, and hence $L_{\xi_1,\xi_2} \simeq 
\Omega^{\pm(2m+2n)}(k)$. 

Now suppose that $L_{\xi_1,\xi_2} \simeq \Omega^{-(2m+2n)}(k)$.
Then the sequence represents a non-zero 
element  $\gamma \in \HHH^{2m+2n+1}(E,k)$
which has the property that $\alpha_K^*(\gamma_K)$ is zero for any 
$\pi$-point $\xymatrix{\alpha_K:K[t]/t^p \ar[r] & KE}$. However,
because $2m+2n+1$ is both positive and odd, there is no such element.
Therefore, we must have that $L_{\xi_1,\xi_2} \simeq 
\Omega^{2m+2n}(k)$ as desired.
\end{proof}

As we show in the following proposition, there exist examples of modules 
of constant Jordan types constructed as in Proposition \ref{create1} which 
are not middle terms of extensions of endotrivial 
modules as in Proposition \ref{extend1}.  

 \begin{thm}
 \label{not-endo}
Let $E$ be an elementary abelian $p$-group of rank 3 and consider the
$kE$-module $L = L_{\zeta_1,\zeta_2,\zeta_3}$ as in (\ref{phi}), where
$\{\zeta_i\}_{i=1,2,3} \subset \HHH^2(E,k)$ 
form a system of generators of $\HHH^\bu(E,k)_{red}$. 
Then there does not exist a projective $kE$-modules $P$ such that
$M = L \oplus P$ fits in a short exact sequence of the form
\begin{equation}
\label{not}
\xymatrix{
0 \ar[r] & L^\prime \ar[r] & M \ar[r] & L^{\prime\prime} \ar[r] & 0
}
\end{equation}
with both $L^\prime, ~ L^{\prime\prime}$ endotrivial modules.
 \end{thm}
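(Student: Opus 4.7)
The plan is to argue by contradiction via a computation of $\dim_k \widehat\Ext^0(L,k)$ carried out in two different ways. Suppose such a short exact sequence exists. By Theorem~\ref{endo} combined with Dade's classification of endotrivial modules over an elementary abelian $p$-group of rank at least two, I may write $L' \simeq \Omega^a(k) \oplus \proj$ and $L'' \simeq \Omega^b(k) \oplus \proj$ for some integers $a,b$. Theorem~\ref{create1} together with Lemma~\ref{dim-omega} gives that $L$ has constant Jordan type $6p^2[p]+2[1]$, so $M := L \oplus P$ has stable constant Jordan type $2[1]$. Pulling back the sequence along any $\pi$-point $\alpha_K$ and using that $\Ext^1_{K[t]/t^p}([1],[1]) = K$ with any nontrivial extension producing $[2]$, I conclude that the sequence must split at every $\pi$-point (otherwise $M$ would have stable Jordan type $1[2]$ at that $\pi$-point). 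For $p>2$, comparing stable Jordan types also forces both $a$ and $b$ to be even. Hence the extension class $\zeta \in \widehat\Ext^1(L'',L') \cong \widehat\HHH^{b-a+1}(E,k)$ restricts to zero at every $\pi$-point.

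Applying $\Hom_{kE}(-,k)$ to the defining sequence $0 \to L \to \Omega^2(k)^{\oplus 3} \to k \to 0$ produces a long exact sequence of Tate-Ext groups whose relevant pieces are the Koszul-type multiplication maps $\widehat\HHH^0(E,k) \to \widehat\HHH^2(E,k)^{\oplus 3}$ (of rank one, sending $1$ to $(\zeta_1,\zeta_2,\zeta_3)$) and $\widehat\HHH^1(E,k) \to \widehat\HHH^3(E,k)^{\oplus 3}$ (injective, because multiplication by each $\zeta_i$ is injective on $\HHH^*(E,k)$). This yields $\dim_k \widehat\Ext^0(L,k) = 3\dim_k \widehat\HHH^2(E,k) - 1 = 17$. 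On the other hand, applying $\Hom_{kE}(-,k)$ to $0 \to \Omega^a(k) \to M \to \Omega^b(k) \to 0$ (after absorbing the projective summands into $M$) and using $\widehat\Ext^0(M,k) = \widehat\Ext^0(L,k) = 17$ gives
\[
17 = \dim_k \widehat\HHH^a(E,k) + \dim_k \widehat\HHH^b(E,k) - r_1 - r_2,
\]
where $r_1 = \dim_k \operatorname{im}(\cdot\zeta\colon \widehat\HHH^{a-1}(E,k) \to \widehat\HHH^b(E,k))$ and $r_2 = \dim_k \operatorname{im}(\cdot\zeta\colon \widehat\HHH^a(E,k) \to \widehat\HHH^{b+1}(E,k))$, both of which vanish when $\zeta = 0$.

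In the stably split case $\zeta = 0$, the identity forces $\dim_k \widehat\HHH^a(E,k) + \dim_k \widehat\HHH^b(E,k) = 17$. The formulas $\dim_k \widehat\HHH^n(E,k) = \binom{n+2}{2}$ for $n\ge 0$ and $\binom{|n|+1}{2}$ for $n \le -1$ give the doubly-infinite sequence $\ldots,15,10,6,3,1,1,3,6,10,15,21,28,\ldots$, which contains no pair (equal or distinct) of entries summing to~$17$; this excludes the split case. The remaining case $\zeta \ne 0$ --- possible when $b-a+1 < 0$ (where every negative Tate class restricts to zero at every $\pi$-point by Proposition~\ref{negcoho}) or, for $p>2$, when $\zeta$ lies in the subspace of $\HHH^*(E,k)$ generated by products of at least two of the exterior generators $\eta_i$ --- is the main obstacle. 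I would handle it by enumerating the finitely many pairs $(a,b)$ with $\dim_k \widehat\HHH^a(E,k) + \dim_k \widehat\HHH^b(E,k) \ge 17$, establishing analogous identities for $\dim_k \widehat\Ext^n(L,k)$ at further degrees via the same Koszul-type long exact sequence, and invoking Lemma~\ref{syzygy-onto} to force surjectivity of nontrivial maps between Heller shifts of $k$, thereby constraining $r_1$, $r_2$ and their analogues sharply enough to rule each pair out.
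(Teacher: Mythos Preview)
Your setup and the split case are correct: the computation $\dim_k\widehat{\Ext}^0(L,k)=17$ is right, and the triangular-number check rules out $\zeta=0$ cleanly. This is a genuinely different line of attack from the paper's, which never computes $\widehat{\Ext}^0(L,k)$ directly but instead restricts the putative sequence to a rank-$2$ subgroup $F\subset E$, where $L_{|F}\cong\Omega^2(k)\oplus\Omega^4(k)\oplus\proj$, and extracts the sharp constraints $n_2\le 2$, $n_1\ge 1$, and $|2-n_2|+|1-n_2|=|n_1-n_2|$ (so in particular $n_1+n_2=3$ when $n_2\le 1$).

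The gap is in the non-split case. Your claim that one can ``enumerate the finitely many pairs $(a,b)$ with $\dim_k\widehat{\HHH}^a+\dim_k\widehat{\HHH}^b\ge 17$'' is false: that set is infinite, since either dimension grows without bound as $|a|$ or $|b|$ increases. Without an a priori bound on $a$ and $b$, your proposed enumeration cannot start, and the further identities for $\widehat{\Ext}^n(L,k)$ give one equation per degree against two unknowns $r_1^{(n)},r_2^{(n)}$ whose behaviour for negative-degree $\zeta$ (the main case, by Proposition~\ref{negcoho}) is not controlled by anything you have written. Lemma~\ref{syzygy-onto} does not help here: it applies to maps that do \emph{not} vanish on restriction to some $\pi$-point, whereas your $\zeta$ vanishes everywhere by construction. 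The paper's rank-$2$ restriction is precisely the device that converts the problem into a finite one; your argument needs an analogue of it (or some other bound on $a,b$) before the endgame you sketch can be carried out. A minor additional omission: for $p=2$ you have not argued that $a,b$ are even, which the paper does via $\dim\Omega^n(k)\equiv(-1)^n\pmod{p^3}$.
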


\begin{proof} Observe that $M$ has stable Jordan type $2[1]$,
so that in any short exact sequence of the form (\ref{not}) both $L^\prime,
L^{\prime\prime}$ must have stable Jordan type $1[1]$.  
We assume that such a short exact  sequence 
exists and proceed to obtain a contradiction.  
First, by eliminating projective summands we may reduce to the 
case when both $L^\prime$ and 
$L^{\prime\prime}$ are projective-free.  Since $E$ is a $p$-group,  we have 
$P \cong (kE)^t$ for some $t \geq 0$.   
Since $L^\prime, L^{\prime\prime}$ have stable constant Jordan type $1[1]$, 
they are endotrivial by Theorem~\ref{endo}.   
If $p>2$, then we immediately conclude  
$L^\prime \cong \Omega^{2n_1} (k)$, $L^{\prime\prime} 
\cong \Omega^{2n_2} (k)$
for some integers $n_1, n_2$ since   $L^\prime$, 
$L^{\prime\prime}$ have stable Jordan type $1[1]$. 
If $p=2$, then $\Dim L^\prime + \Dim L^{\prime\prime} 
= \Dim L + \Dim P \equiv 2 \,\,\,({\rm mod} \,\, \Dim kE)$.   
Since $\Dim \Omega^{n}(k) \equiv (-1)^n \,\,\, 
({\rm mod} \,\, \Dim kE)$,  we get that 
$L^\prime$, $L^{\prime\prime}$ must 
be even syzygies in the case $p=2$ as well. 
Thus, the sequence becomes
\begin{equation}
\label{seq0}
\xymatrix{
0 \ar[r] & \Omega^{2n_1} (k) \ar[r] & L \oplus (kE)^t \ar[r] &
\Omega^{2n_2} (k) \ar[r] & 0
} 
\end{equation}

Let $\{g_1, g_2, g_3\}$ be group generators of $E$, and 
consider the subgroup $F = \langle g_1, g_2 \rangle \subseteq E$.
The restriction of $\zeta_3$ to $F$ vanishes, so that 
$$
L_{|F } \ \cong \ \Omega^2(k) \oplus L_{|F,\zeta_1,\zeta_2} \oplus \proj,
$$
where $L_{|F,\zeta_1,\zeta_2}$ is constructed as in (\ref{phi}) with respect
to the group $F$.
By Lemma \ref{rk2}, 
$L_{|F,\zeta_1,\zeta_2} ~ \cong ~ \Omega^4(k)$ as a $kF$-module.
Consequently,
$$
L_{|F } \ \cong \ \Omega^{2}(k) \oplus \Omega^{4}(k) \oplus \proj.
$$

Restricting (\ref{seq0}) to $F$ and eliminating the 
projective summands at the ends, 
we obtain an exact sequence of $kF$-modules
\begin{equation}\label{seq1}
\xymatrix{
0 \ar[r] & \Omega^{2n_1}(k)  \ar[r] & 
\Omega^{2}(k) \oplus \Omega^{4}(k) \oplus \proj \ar[r] &
\Omega^{2n_2}(k)  \ar[r] & 0
}
\end{equation}

By performing a shift and eliminating excess projectives, 
we get the sequence of $kF$-modules
\begin{equation}\label{seq2}
\xymatrix{
0 \ar[r] & \Omega^{2n_1-2n_2}(k) \ar[r] & 
\Omega^{4-2n_2}(k) \oplus \Omega^{2-2n_2}(k) \oplus \proj 
\ar[r]^{\qquad \qquad \qquad \theta} &
k \ar[r] & 0
}
\end{equation}
where $\theta$ restricts to $\theta_1$ on $\Omega^{4-2n_2}(k)$ and to 
$\theta_2$ on $\Omega^{2-2n_2}(k)$.   
Since the kernel of $\theta$ has 
stable Jordan type $1[1]$, at least one of $\theta_1$, $\theta_2$ does not 
factor through a projective module when 
restricted to any $\pi$-point of $F$.   
Hence, Proposition~\ref{negcoho} implies 
that $n_2 \leq 2$.  By the same argument applied to the 
other end of Sequence (\ref{seq2}), we have that $2n_1 - 2n_2$ cannot
be less than $2-2n_2$ (i.e., $n_1 \geq 1$).  We further observe that 
if there were a non-trivial projective summand 
in the middle term of (\ref{seq2}),
then $\theta$ restricted to the projective 
summand would be surjective, and, hence, 
the kernel would have stable Jordan type different from $1[1]$. 
It follows that there is no projective summand in the middle term of 
(\ref{seq2}). Hence, we get $p^3(|4 - 2n_2|+|2-2n_2|) + 2 = 
\Dim (\Omega^{4-2n_2}(k) \oplus \Omega^{2-2n_2}(k)) 
= \Dim \Omega^{2n_1-2n_2}(k) + 1 = p^3(|2n_1 - 2n_2|) + 2$ \,
 by Lemma~\ref{dim-omega}.  Hence, 
\begin{equation}
\label{parity}
|2-n_2| + |1-n_2| = |n_1-n_2|
\end{equation}
We conclude that $n_1 +n_2 = 3$ when $n_2 \leq 1$.  
Moreover, $n_1$ and $n_2$ must have different parity.

\vspace{0.2in}
\noindent
We consider two cases: 

\begin{itemize}
\item[(I)] $n_2 \geq 0$, and 
\item[(II)] $n_2<0$.
\end{itemize} 

\vspace{0.2in}
\noindent 
{\it Case I: $n_2 \geq 0$.}   
We first show that the middle term of the sequence (\ref{seq0}) 
does not have a projective summand, that is $t=0$.

The sequence (\ref{seq0}) represents a cohomology class in 
$\widehat{\HHH}^{2n_2-2n_1+1}(E,k)$ which 
we denote by $\eta$. Restriction of (\ref{seq0}) 
to any $\pi$-point of $E$ has the form 
$$
\xymatrix{
0 \ar[r] & n[p] + 1[1] \ar[r] &(n+m)[p] + 2[1] \ar[r] &
m[p] + 1[1] \ar[r] & 0
}
$$
This sequence of $\Z/p\Z$-modules is necessarily split. 
Hence, $\eta$ vanishes upon restriction to any $\pi$-point of $E$. 

Applying $\Hom_{\stmod(kE)}(-,k)$ to the short exact sequence (\ref{seq0}), 
viewed as a distinguished triangle in $\stmod(kE)$, 
we obtain a long exact sequence  
$$\xymatrix{
\dots \ar[r] & \widehat{\Ext}^{-1}_{kE}(\Omega^{2n_1}(k),k) \ar[r]^{\delta} &
\widehat{\Ext}^{0}_{kE}(\Omega^{2n_2}(k),k) \ar[r] &
\widehat{\Ext}^{0}_{kE}(L,k)  \ar[r] & \dots \ ~.
}
$$
which is equivalent to 
\begin{equation}\label{long}
\xymatrix{
\dots \ar[r] & {\HHH}^{2n_1-1}(E,k) \ar[r]^{\cdot \eta} &
{\HHH}^{2n_2}(E,k) \ar[r] &
\widehat{\Ext}^{0}_{kE}(L,k)  \ar[r] & \dots \ ~.
}
\end{equation}
The rank of the free summand $(kE)^t$ in the middle term of 
the sequence (\ref{seq0}) equals the dimension of 
the kernel of the map 
$\xymatrix{\widehat{\Ext}^{0}_{kE}(\Omega^{2n_2}(k),k) \ar[r] &
\widehat{\Ext}^{0}_{kE}(L,k)}$.   Hence, it also equals the 
dimension of the image of the connecting homomorphism $\delta$, 
which is  multiplication by  $\eta$, a cohomology class 
of degree $\deg \eta = 2n_2-2n_1+1$.    
Therefore, to show that $t=0$  we need to show that  
multiplication by $\eta$ on ${\HHH}^{2n_1-1}(E,k)$ is trivial.

If $2n_2-2n_1+1 > 0$ then for $p>2$ the fact that $\eta$  
vanishes upon restriction to any $\pi$-point 
implies that $\eta$ is divisible by the product $\eta_1\eta_2\eta_3$  
where $\eta_1$, $\eta_2$, $\eta_3$ 
are the nilpotent generators in degree $1$ of $\HHH^*(E,k)$.  
Thus, the multiplication by $\eta$  of any odd-dimensional 
class  of positive degree is zero. 
Hence, the image of $\cdot \eta$ on $\HHH^{2n_1-1}(E,k)$ is 
trivial since $2n_1-1 \geq 1$. 
If $p=2$ then the only class which vanishes upon restriction 
to any $\pi$-point is the zero class.  
We conclude that $t=0$ in this case.    

Now assume that $2n_2-2n_1+1 < 0$. Since cohomology of $E$ 
is not periodic, multiplication by $\eta$ on $\HHH^{- \deg \eta}(E,k)$
is trivial (\cite[2.2]{BC1}).     
Since any monomial in $\HHH^{2n_1-1}(E,k)$ factors as a product of a class  of 
degree $- \deg \eta = 2n_1-2n_2 -1$ and a class  of degree $2n_2$,  
we further conclude that   
multiplication by $\eta$  on $\HHH^{2n_1-1}(E,k)$  is trivial.  
Hence, $t=0$ in this case as well.  
 
We now compare the dimensions of the terms of the 
sequence (\ref{seq0}) in which we take $t=0$. Note that $n_1$ and $n_2$
are both non-negative in the case that we are considering. 
We get
$$
6p^3+2 = \Dim L = \Dim \Omega^{2n_1}(k) + \
\Dim \Omega^{2n_2}(k) = n_1(n_1+1)p^3 +1 + n_2(n_2+1)p^3 + 1
$$
using Lemma~\ref{dim-omega}.  Hence, we get 
the following equation on $n_1,n_2$:
$$
n_1^2+n_2^2 + n_1 + n_2 = 6
$$
The only non-negative integer solutions are $(2,0)$ 
and $(0, 2)$.  This is impossible 
since $(n_1,n_2)$ must have the opposite parity by (\ref{parity}).
Hence, we obtain a contradiction as desired,  
and the case $n_2 \geq 0$ is finished. 

\vspace{0.2in}
{\it Case II.}  We now consider the case of negative $n_2$.    
The projective summand $(kE)^t$ in the 
middle term of the sequence (\ref{seq0}) 
can not be any bigger than a projective 
cover of $\Omega^{2n_2} (k)$ as otherwise, kernel of the map
$\xymatrix{(kE)^t \ar[r] &   \Omega^{2n_2} (k)}$
has a projective submodule which is then 
a direct summand of the right hand term of (\ref{seq0}).
By Lemma~\ref{proj-dim}, 
$t \leq 2n_2^2 - n_2$. 
Hence, 
\begin{align*} 
\Dim L  & \geq \Dim \Omega^{2n_2}(k) + \Dim 
\Omega^{2n_1}(k) - p^3(2n_2^2 - n_2) \\ 
& = p^3(n_2^2 - n_2) +  p^3(n_1^2+n_1) + 2 - p^3(2n_2^2 - n_2)
\end{align*}
Simplifying, and using the fact that $n_2 + n_1 = 3$, we get 
\begin{align*}
\Dim L & \geq p^3(n_1^2-n_2^2 + 3 - n_2) + 2 \\ 
& = p^3(3(n_1-n_2) + 3 - n_2) + 2 = p^3(3n_1 + 3 - 4n_2) + 2
\end{align*}
Since $n_1 \geq 1$, and $n_2 < 0$, 
we conclude that $\Dim L \geq 10 p^3 +2$.   
On the other hand, $\Dim L = 6p^3 + 2$ by 
Lemma~\ref{dim-omega} and the definition of $L$, 
and we have a contradiction. 
\end{proof}


\vspace{0.2in}

\section{Constraint on ranks}
\label{constraint}

A consequence of our constructive techniques is a new proof of a 
special case of Macaulay's Generalized Principle Ideal Theorem. 
The point is that if the coefficients are in a field of finite
characteristic, then
we can represent homogeneous elements of a multivariable polynomial
ring as elements in the cohomology ring of an elementary abelian
$p$-group. We can represent a matrix of such elements as a 
map of modules over the group algebra. Specifically, we have the 
following 

\begin{thm} \label{ranks} (See Exercise 10.9 of \cite{Eis})
Suppose that $k$ is an algebraically closed field.
Fix integers $n$, $r$ and $d_1, \dots, d_{r+1}$, with $n \geq 3$, $r \geq 2$,
and $d_j>0$ for all $j$. Let $P = k[x_1, \dots, x_n]$
be a polynomial ring in $n$ variables. Let
$A = A(x_1, \dots, x_n)$ be a $(r+1) \times r$ matrix with the 
property that every entry in column $i$ of $A$ is  
a homogeneous polynomial in $P$ of degree $d_j$ for 
all $j = 1, \dots, r+1$. Then there is
some point $\alpha \in k^n \backslash \{0\}$ such 
that $A(\alpha)$ has rank less than
$r$. Equivalently, the determinants of the $r \times r$ minors
of $A$ (which are elements of $P$) have a common non-trivial zero.
\end{thm}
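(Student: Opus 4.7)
The strategy is to realize the entries of $A$ as cohomology classes for an elementary abelian $p$-group, apply Theorem~\ref{create1} to produce an endotrivial kernel, classify that kernel as a Heller shift of $k$ via Theorem~\ref{endo} together with Dade's theorem, and then derive a cohomological contradiction from the short exact sequence defining it.

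Let $p$ denote the characteristic of $k$ and let $E = (\bZ/p\bZ)^n$.  Using (\ref{orderp}), identify $P = k[x_1,\dots,x_n]$ with the polynomial subring of $\HHH^\bu(E,k)$ generated by degree-two classes $\zeta_1,\dots,\zeta_n$ (for $p > 2$; the case $p = 2$ is parallel, with the degree-one polynomial ring $\HHH^*(E,k) = k[\eta_1,\dots,\eta_n]$). Under this identification each entry $A_{j,i}$ becomes a class $\zeta_{i,j} \in \HHH^{2d_j}(E,k)$. Since $k$ is algebraically closed, the $k$-rational points of $\Pi(E) \cong \bP^{n-1}$ correspond to lines in $k^n\setminus\{0\}$, and $A(\alpha)$ has rank less than $r$ at $\alpha$ iff the matrix of cohomology classes has rank less than $r$ at the corresponding $\pi$-point.

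Argue by contradiction: assume $A(\alpha)$ has rank $r$ at every $0 \neq \alpha \in k^n$, so that $(\zeta_{i,j})$ has rank $r$ at every $k$-rational $\pi$-point, and hence (by base change from closed points) at every $\pi$-point. Apply Theorem~\ref{create1} with $m_j = 2d_j$, $n_i = 0$, $M = \bigoplus_{j=1}^{r+1} \Omega^{2d_j}(k) \oplus \proj$, $N = k^r$, and $\varphi\colon M \to N$ the surjection built from the $r\times(r+1)$ matrix $(\hat\zeta_{i,j})$ (with the projective summand of $M$ chosen to ensure surjectivity). The kernel $L$ has stable constant Jordan type $((r+1)-r)[1] = 1[1]$, so by Theorem~\ref{endo} $L$ is endotrivial, and by Dade's classification of endotrivial modules over an abelian $p$-group (cf.\ \S\ref{end}) we obtain $L \cong \Omega^{2N}(k) \oplus \proj$ for some integer $N$. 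The extension class $\delta \in \widehat\HHH^{1-2N}(E,k)^r$ of the resulting sequence pulls back along any $\pi$-point to the sequence $0 \to 1[1]+\proj \to (r+1)[1]+\proj \to r[1] \to 0$ of $K[t]/t^p$-modules, which splits; hence $\delta$ vanishes at every $\pi$-point.

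The main obstacle is completing the contradiction for all possible integers $N$. When the relevant Tate degree $1-2N$ is positive, one uses the fact that a positive-degree cohomology class vanishing at every $\pi$-point must be nilpotent (zero when $p = 2$, since $\HHH^*(E,k)$ is a polynomial ring; contained in the Bockstein-annihilating ideal when $p$ is odd): once $\delta$ is ruled out, the sequence splits, forcing $M \cong L \oplus k^r$, which is impossible because $M$ has no trivial summand (each $d_j > 0$). The case $1 - 2N \leq 0$, where $\delta$ lives in negative Tate cohomology, is the principal technical hurdle. The plan there is to combine the explicit dimension formulas of Lemma~\ref{dim-omega} with the Cramer-type syzygy $\sum_{j}(-1)^j A_{j,\bullet}\,M_j = 0$ satisfied by the maximal minors $M_j$ of $A$: the syzygy produces a linear dependence among $r+1$ cohomology classes of degrees $\sum_{i\neq j} d_i$, and because $\HHH^\bu(E,k)$ has Krull dimension $n \geq 3$ this dependence is incompatible with the existence of an endotrivial $L$ for any integer $N$, mirroring the case-by-case dimension-and-syzygy analysis used to prove Lemma~\ref{rk2} and Theorem~\ref{not-endo}.
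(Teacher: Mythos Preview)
Your setup agrees with the paper through the point where $L$ is endotrivial and hence stably a Heller shift.  After that your argument diverges from the paper's and does not close.

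The paper does not analyze the extension class at all.  Instead it first observes that $\bigoplus_{j}\Omega^{2d_j}(k)$ has no projective summand, so $L\cong\Omega^{2m}(k)$ on the nose (no $\oplus\proj$).  Then it restricts to an elementary abelian subgroup $H\subset E$ of rank~$2$: by Lemma~\ref{dim-omega}(4) the projective-free part of $L\!\downarrow_H$ has dimension $p^2\sum_j d_j+1$, forcing $m=\sum_j d_j$.  Next it restricts to a rank~$3$ subgroup $H'$ and computes $\Dim L'$ two ways using Lemma~\ref{dim-omega}(5): as $\Omega^{2m}(k)$ it is $p^3(\sum d_j)(\sum d_j+1)+1$, while as the kernel of the restricted map it is $p^3\sum_j d_j(d_j+1)+1$.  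Since $r\geq 2$ and all $d_j>0$ these disagree, giving the contradiction.  This is where $n\geq 3$ enters: one needs a rank~$3$ subgroup.  Finally, characteristic~$0$ is handled by spreading out over a finitely generated $\bZ$-subalgebra of $k$ and using density of closed points with positive residue characteristic.

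Your proposed route via the extension class $\delta\in(\widehat\HHH^{1-2N}(E,k))^r$ has gaps in both cases.  When $1-2N>0$ (and $p>2$) the degree $1-2N$ is odd, so every class there is already nilpotent; knowing that $\delta$ restricts to zero at every $\pi$-point does not force $\delta=0$, and hence does not force the sequence to split.  When $1-2N\leq 0$, the ``Cramer-type syzygy'' plan is only a sketch: it is not clear how the relation among the maximal minors produces a cohomological obstruction to the existence of $\Omega^{2N}(k)$, nor where $n\geq 3$ would be invoked.  You also omit the characteristic~$0$ reduction entirely.  Replace the extension-class analysis with the two-subgroup dimension count above; that is both shorter and complete.
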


\begin{proof}
Assume first that the characteristic of $k$ is $p > 0$, as in the rest
of the paper. Let $E$ denote an elementary abelian $p$-group of 
rank $n$.  As recalled in (\ref{orderp}),  $\HHH^*(E,k)$ contains a 
polynomial subring $Q \cong k[\zeta_1, \dots, \zeta_n]$, where the 
elements $\zeta_i$ are in degree 2 if $p >2$ and in degree 1 if $p=2$.
For the purposes of the argument, we assume that $p > 2$. The proof 
in the even characteristic case is very similar. 

Let $A = (a_{i,j})$ where for each $i$ and $j$, $a_{i,j} = a_{i,j}
(x_1, \dots, x_n)$ is a homogeneous polynomial of degree $d_j$. Then,
$a_{i,j}(\zeta_1, \dots, \zeta_n)$ is an element in $\HHH^{2d_j}(E,k)$.
Moreover, such an element is uniquely represented by a cocycle 
$$
\xymatrix{
a'_{i,j}(\zeta_1, \dots, \zeta_n): \ \ \Omega^{2d_j}(k) \ar[r] & k.
}
$$
Now we let $A'$ be the map 
$$
\xymatrix{
A': \ \ \bigoplus\limits_{t = 1}^{r+1} \Omega^{2d_t}(k) \ar[r] & k^r
}
$$
whose matrix is $A' = (a'_{i,j}(\zeta_1, \dots, \zeta_n))$.  

We proceed to prove the theorem by contradiction, observing
that if $A(\alpha)$ had rank $r$ for all $\alpha \in k^n \backslash \{0\}$,
then, because $k$ is algebraically closed (so that the $k$-rational points of $\Pi(E)$
are dense), the kernel $L$ of $A^\prime$ would be a module of constant
Jordan type (with stable Jordan type $1[1]$) as in Theorem  \ref{create1}.  
By Theorem \ref{endo}, $L$  is an endotrivial module. 
Hence, $L \cong  \Omega^{2m}(k) + \proj$ for some $m$.  Since  
$\bigoplus\limits_{t = 1}^{r+1} \Omega^{2d_t}(k)$ does 
not have projective summands, 
we conclude that $L \cong  \Omega^{2m}(k)$. 

 As in the proof of Theorem \ref{not-endo},
we can use a dimension argument to ascertain the value of $m$.  
Let $H \subset E$ be an elementary abelian subgroup of rank $2$. 
Restricting to $H$ and 
eliminating the projective summand in $L$ and in the domain of $A'$,  
we get that the dimension of the projective-free 
part of $L\downarrow_H$ is precisely
$p^2\sum_{i=1}^{r+1} d_i +1$ by Lemma \ref{dim-omega}(4). 
Consequently, by the same 
lemma $m = \sum_{i=1}^{r+1} d_i$.

Now let $H^\prime \subset E$ be an elementary abelian 
$p$-subgroup of rank 3, and let $L^\prime$ 
be the projective-free part of the restriction of $L$ to $H^\prime$. 
Since $L^\prime \simeq \Omega^{2m}(k)$ as 
$H^\prime$-modules, Lemma~\ref{dim-omega}(5) implies  that 
\begin{equation}
\label{dim1}
\Dim L^\prime =  p^3m(m+1) +1 =  
p^3(\sum_{i=1}^{r+1} d_i)(\sum_{i=1}^{r+1} d_i +1) + 1
\end{equation}
On the other hand, $L^\prime$ is the projective-free 
part of the kernel of the map $A'$ restricted to $H^\prime$. 
Applying Lemma~\ref{dim-omega}(4) 
to compute the dimension of the $H^\prime$-module 
$\bigoplus\limits_{t = 1}^{r+1} \Omega^{2d_i}(k)$
we get
\begin{equation}
\label{dim2}
\Dim L^\prime = p^3 \sum_{t =1}^{r+1} d_i(d_i+1) +1. 
\end{equation}
As all $d_i>0$, the formula (\ref{dim1}) clearly 
yields a greater value than (\ref{dim2}).  
Thus, we get a contradiction. 

\vspace{0.1in}

Now, we consider a field of characteristic 0, still denoted $k$.
Let $R \subset k$ be the ring finitely generated over $\bZ$ by the 
coefficients of the (homogeneous polynomial) entries of $A$.  
The $r+1$ determinants of the $r\times r$ minors of $A$  define a closed 
subscheme $Z$ of the projective scheme $\bP_R^{n-1}$.  By the 
preceding argument for fields of positive characteristic, $Z$ intersects 
each geometric fiber of 
$\xymatrix@-.5pc{\bP^{n-1}_R \ar[r] & \Spec R}$ above a point 
of $\Spec R$ with positive residue characteristic.

Observe that the geometric points of $\Spec R$ whose residue characteristics
are positive are dense in $\Spec R$.  
Hence, the image of $Z\subset \bP^{n-1}_R$
in $\Spec R$ is both closed and dense and thus all of $\Spec R$.
We conclude that $Z$ intersects every geometric fiber of 
$\xymatrix@-.5pc{\bP^{n-1}_R \ar[r] & \Spec R}$,
including that given by $\xymatrix@-.5pc{R \ar[r] & k}$. 
\end{proof}


\vspace{0.2in}

\section{Auslander Reiten Components}
\label{AR}
Our objective in this section is to show how the Auslander-Reiten 
theory of almost split sequences can be used to construct modules
having certain specific constant Jordan types. 
One of our main results is that if one module in a component of 
the Auslander-Reiten quiver of $kG$ has constant Jordan type
and if $k$ is perfect, then every module in the component 
has constant Jordan type. 

We begin this section by briefly recalling a few basic facts
concerning the Auslander-Reiten theory of almost split sequences, 
sometimes called Auslander-Reiten sequences. We refer the reader 
to \cite{ARS} or \cite[I.4]{B} for more detailed accounts. 

Suppose that $A$ is a finite dimensional algebra over the field $k$. 
A sequence of A-modules
$$
\xymatrix{ 
0 \ar[r] & L \ar[r]^{\zeta} & M \ar[r]^{\gamma} & N \ar[r] & 0 
}
$$ 
is an almost split sequence if $M$ and $N$ are indecomposable, the 
sequence is not split, and one of the following two equivalent conditions
holds:
\begin{enumerate}
\item
Any $A$-module map $\,\sigma: X \longrightarrow N$ 
which is not a splittable epimorphism, 
admits a factorization  $\sigma = \gamma\theta$
for some homomorphism  $\,\theta: X \longrightarrow M$. 

\item 
Any $A$-module map \, $\sigma: L \longrightarrow Y$
which is not a splittable monomorphism, 
admits a factorization $\sigma = \theta\zeta$
for some homomorphism \, $\theta: M \longrightarrow Y$.
\end{enumerate}

\noindent 
(see \cite[V.1]{ARS}).
If $M$ is an indecomposable non-projective 
$A$-module, then the almost split sequence of $A$-modules 
ending in $M$  is unique up to isomorphism and 
has the form
$$\xymatrix{0 \ar[r]& \tau M \ar[r]& B \ar[r]& M \ar[r]& 0}$$
where 
$\tau = \D \circ \Tr$ is the {\it translation} operator  on the set 
of isomorphism classes of  indecomposable non-projective $A$-modules  
defined as the composition of the transpose and 
duality functors (see \cite[VII.1]{ARS}).  Similarly,  there is  a 
unique up to isomorphism almost split sequence starting with an indecomposable 
non-injective module $L$.
If $A$ is a symmetric algebra (in particular, 
if $A = kG$ for a finite group $G$), then $\tau = \Omega^2$.   
We give description of $\tau$  in the case of any 
finite group scheme $G$ in the  proof of Lemma~\ref{tau}.

Let $L$, $M$ be indecomposable $A$-modules. 
A map $\xymatrix{\gamma: L \ar[r] & M}$ is an {\it {irreducible}} map
if the following two conditions hold: 
\begin{enumerate}
\item[(a)] $\gamma$ is neither a splittable monomorphism 
nor a splittable epimorphism
\item[(b)] for any factorization $\gamma = \mu\nu$ either $\mu$ is a 
splittable monomorphism (has a right inverse) or $\nu$ is a splittable
epimorphism (has a left inverse) 
\end{enumerate}
(see \cite[V.5]{ARS}). 
Irreducible maps are closely related to almost split 
sequences as the following lemma   indicates.
\begin{lemma}
\label{irred} (\cite[V.5.3]{ARS})  Let
$ 
0 {\longrightarrow}  L \stackrel{\zeta}{\longrightarrow}  
M \stackrel{\gamma}{\longrightarrow} N  \longrightarrow 0 
$
be an almost split sequence, and let $M = 
\bigoplus\limits_{i=1}^r M_i$ where each
$M_i$ is an indecomposable module.  Then the maps $\xymatrix{\zeta_i:
L \ar[r] & M_i}$ and $\xymatrix{\gamma_i: M_i \ar[r] & N}$ that make
up $\zeta$ and $\gamma$ are irreducible maps. Moreover, every irreducible
map occurs in such a way in some almost split sequence.

\end{lemma}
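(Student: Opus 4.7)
The plan is to establish the two assertions separately. For the forward direction, I would fix an index $i$ and show that $\gamma_i = \gamma \circ \iota_i : M_i \to N$ is irreducible; the argument for $\zeta_i$ is entirely dual, using condition (2) in place of (1). First, $\gamma_i$ cannot be a splittable epimorphism, for a retraction $s$ of $\gamma_i$ would yield $\gamma \circ (\iota_i s) = \Id_N$, splitting the sequence. Nor can $\gamma_i$ be a splittable monomorphism: since $N$ is indecomposable, such a splitting would force $\gamma_i$ to be an isomorphism, and composing $\gamma_i^{-1}$ with $\iota_i$ would again split $\gamma$.

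Next, given a factorization $\gamma_i = \mu\nu$ with $\nu: M_i \to X$ and $\mu: X \to N$, I would assume $\mu$ is not a splittable epimorphism and deduce that $\nu$ is a splittable monomorphism. By condition (1) there exists $\theta: X \to M$ with $\gamma\theta = \mu$, so $\gamma(\iota_i - \theta\nu) = 0$ and hence $\theta\nu = \iota_i - \zeta\beta$ for some $\beta: M_i \to L$. Projecting along $\pi_i: M \to M_i$ gives $\pi_i \theta \nu = \Id_{M_i} - \zeta_i \beta$. The key observation is that $\zeta_i$ cannot be a splittable epimorphism: indecomposability of $L$ together with a splitting of $\zeta_i$ would force $\zeta_i$ to be an isomorphism, which as before would split the entire sequence via $\zeta_i^{-1}\pi_i$. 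Consequently $\zeta_i\beta$ lies in the maximal ideal of the local ring $\End(M_i)$ (local by Fitting's lemma, since $M_i$ is an indecomposable finite dimensional module), so $\Id_{M_i} - \zeta_i \beta$ is an automorphism and $(\Id_{M_i} - \zeta_i\beta)^{-1}\pi_i\theta$ is a left inverse of $\nu$.

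For the converse, given an irreducible map $f: X \to Y$ between indecomposable modules, I would use that $f$ is not a splittable epimorphism to conclude that whenever the almost split sequence $0 \to \tau Y \to B \stackrel{\gamma}{\to} Y \to 0$ exists (which it does when $Y$ is non-projective; the dual argument using the almost split sequence starting at $X$ handles the other case), condition (1) supplies a lift $h: X \to B$ with $f = \gamma h$. Since $f$ is irreducible and $\gamma$ is not a splittable epimorphism (by the forward direction), $h$ must be a splittable monomorphism. Indecomposability of $X$ combined with Krull--Schmidt then identifies $X$ with one of the summands $B_j$, and composing with a suitable automorphism of $B$ arranges $h$ to be the inclusion $\iota_j$, exhibiting $f$ as $\gamma_j$ up to an automorphism of $B_j$.

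The main technical obstacle is the locality of $\End(M_i)$ underlying the radical argument in the second paragraph; this is Fitting's lemma for the indecomposable summand $M_i$ of a finite dimensional module, and it is what turns the ``error term'' $\zeta_i\beta$ into a non-unit after shifting by the identity. A secondary subtlety is carefully identifying $f$ with the \emph{intrinsic} component $\gamma_j$ in the converse direction, which requires an automorphism of $B$ rather than a mere existential argument; once one appeals to Krull--Schmidt this is routine.
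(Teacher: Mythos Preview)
The paper does not supply its own proof of this lemma; it is quoted verbatim from \cite[V.5.3]{ARS} and used as background. Your argument is essentially the standard one found there, and it is correct. One small slip: in the converse, the reason $\gamma$ is not a splittable epimorphism is simply that the almost split sequence does not split, not ``by the forward direction''; the forward direction concerns the components $\gamma_i$, not $\gamma$ itself. Otherwise your sketch is sound, including the radical argument via locality of $\End(M_i)$ and the Krull--Schmidt step identifying $X$ with a summand of $B$.
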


The next result shows that irreducible maps 
originating in projective modules, and consequently, 
almost split sequences with projective summands 
in the middle term  have a very special form.  We now 
take the algebra $A$ to be the group algebra $kG$ for 
a finite group scheme $G$.   Since $kG$ is a Frobenius algebra, 
projective modules are injective  and vice versa.  

\begin{lemma} \label{proj-in-ass} (\cite[V.5.5]{ARS})
Let $G$ be a finite group scheme, and $P$ be an indecomposable
projective $kG$-module. Then, up to scalar multiple, the only irreducible
map originating at $P$ is the quotient map $\xymatrix@-.6pc{P \ar[r] &
P/\soc(P)}$. Consequently, the only almost split sequence in which $P$
occurs is the almost split sequence ending in $P/\soc(P)$.  
This sequence  has the form 
\begin{equation}
\label{ass-kg}
\xymatrix{0 \ar[r]& \Rad(P) \ar[r] &  P \oplus 
\Rad(P)/\soc(P) \ar[r] & \Rad(P)/\soc(P) \ar[r] & 0  } 
\end{equation} 
\end{lemma}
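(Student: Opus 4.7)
The plan is to leverage the Frobenius structure on $kG$: since $kG$ is a Hopf algebra, it is Frobenius, so every indecomposable projective $kG$-module is also injective and has simple socle. I shall use this dual role of $P$ first to control the irreducible maps out of $P$, and then to pin down any almost split sequence in which $P$ occurs.

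For the first assertion, let $f \colon P \to M$ be an irreducible map with $M$ indecomposable. Because $P$ is injective, every monomorphism out of $P$ is splittable; since $f$ is not a splittable monomorphism, it cannot be injective, so $\ker f \neq 0$. The socle $\soc(P)$ is the unique simple submodule of $P$, whence $\soc(P) \subseteq \ker f$, and $f$ factors through the quotient $\pi \colon P \to P/\soc(P)$ as $f = \bar f \circ \pi$. The map $\pi$ itself is not a splittable monomorphism (its kernel is nonzero), so condition (b) in the definition of irreducibility forces $\bar f$ to be a splittable epimorphism. Hence $M$ is a direct summand of $P/\soc(P)$ and $f$ is, up to scalar, the projection onto $M$ composed with $\pi$; in particular, $\pi$ is the universal irreducible map out of $P$.

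For the second assertion, observe that $P$ can be neither the left term of any almost split sequence (being injective) nor the right term (being projective), so $P$ must appear as a direct summand of the middle term $B$ of some almost split sequence $0 \to L \to B \to N \to 0$. By Lemma~\ref{irred}, the restriction $P \to N$ is irreducible; by the first part, $N$ is a summand of $P/\soc(P)$, so when $P/\soc(P)$ is indecomposable we obtain $N = P/\soc(P)$. The dual argument, in which irreducible maps $L' \to P$ with $L'$ indecomposable are shown to factor through the inclusion $\Rad(P) \hookrightarrow P$ (using that $P$ is projective with simple top $P/\Rad(P)$), gives the identification $L = \Rad(P)$. Reading off the middle term from the resulting pullback diagram then yields $B \cong P \oplus \Rad(P)/\soc(P)$ and recovers the sequence~(\ref{ass-kg}).

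The main obstacle is the dual half-argument just invoked: one needs to know that $\Rad(P)$ is indecomposable and really equals $L$, not merely that $L$ maps to some summand of $\Rad(P)$. The cleanest route is $k$-linear duality, which sends an almost split sequence over $kG$ to one over $(kG)^{\rm op}$ and interchanges $\Rad(P)$ with the socle-quotient of $P^{\#}$; because the antipode yields an isomorphism $(kG)^{\rm op} \cong kG$, applying the already-established first half to the dual indecomposable projective delivers the required statement for $L$. Alternatively, one can compute $\tau(P/\soc(P)) = \Rad(P)$ directly via the Nakayama functor for the Frobenius algebra $kG$ (which is the shift $\Omega^2$ twisted by the Nakayama automorphism, as will be made explicit in the proof of Lemma~\ref{tau}).
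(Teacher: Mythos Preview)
The paper does not supply a proof of this lemma; it is quoted directly from \cite[V.5.5]{ARS}. Your argument is essentially the standard one found there: injectivity of $P$ (from the Frobenius structure on $kG$) forces any irreducible map out of $P$ to factor through $P/\soc(P)$, and the definition of irreducibility then makes the residual factor a split epimorphism onto $P/\soc(P)$, which is indecomposable because its top $P/\Rad(P)$ is simple---you use this fact in the second paragraph, but it deserves explicit mention in the first, since without it one only gets that $M$ is a summand of $P/\soc(P)$. Once $N=P/\soc(P)$ is established, the quickest way to finish is simply to write down the sequence explicitly (with right-hand term $P/\soc(P)$, as the paper itself states in words; the displayed sequence~(\ref{ass-kg}) has a typo in its last term), verify the lifting property by hand, and invoke uniqueness of almost split sequences. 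This sidesteps the duality and Nakayama-functor computations you flag as obstacles, though those routes are also valid.
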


For a finite group scheme $G$, the {\it stable Auslander-Reiten quiver} 
of $G$, denoted $\Gamma^s_G$, 
is a valued quiver whose vertices  are  isomorphism classes of 
indecomposable non-projective $kG$-modules, 
and whose arrows are induced by irreducible morphisms 
(see \cite[VII.1,4]{ARS}). Namely, if there is an irreducible 
map $M \to N$ for non-projective indecomposable modules $M$,$N$, 
then there is an arrow $\xymatrix{[M] \ar[r]& [N]}$ in 
the Auslander-Reiten quiver of $G$.  By Lemma~\ref{irred}, there 
is an arrow $\xymatrix{[M] \ar[r]& [N]}$ if and only if there exists 
an  almost split sequence  $0 \longrightarrow \tau N \longrightarrow 
\widetilde M \longrightarrow N \longrightarrow 0$  
such that $M$ is an indecomposable direct summand of $\widetilde M$. 
A (stable) Auslander-Reiten component of an indecomposable non-projective 
module $M$ is a connected component 
of the Auslander-Reiten quiver $\Gamma^s_G$ containing the vertex $[M]$.  
In addition, the Auslander-Reiten 
quiver is equipped with the translation automorphism 
$\xymatrix{\tau: \Gamma^s_G \ar[r]& \Gamma^s_G}$ induced by the 
translation operator $\tau$.

An Auslander-Reiten component $\Theta$ of $\Gamma^s_G$ has the form
\begin{equation}
\label{tree}
\Theta \simeq \Z[\vec{\Delta}_\Theta]/H
\end{equation}
where $\vec{\Delta}_\Theta$ is a directed tree, and 
$H \subset \Aut(\Z[\vec{\Delta}_\Theta])$ 
is a group acting ``admissibly"  on $\vec{\Delta}_\Theta$ 
(see \cite[VII.4]{ARS} for definitions and \cite[I.4.15.6]{B} 
for the structure isomorphism).
The underlying non-directed tree $\Delta_\Theta$ is 
uniquely determined by the component 
$\Theta$ and $\Theta$ is said to have {\it tree class}  or {\it type} of 
$\Delta_\Theta$.

Later in the section we use the following  result, which is due to Erdmann 
\cite{Er} in the case of finite groups. For arbitrary finite group
schemes, there are similar results of  Farnsteiner \cite{F1}, 
\cite{F2}.   

\begin{thm} \label{erd-farn} \cite{Er}
Suppose that $G$ is a finite group and that $M$ is a $kG$-module
which lies in a block of wild representation type. Then the 
Auslander-Reiten component of $M$ has tree class $A_{\infty}$. 
\end{thm}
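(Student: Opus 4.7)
The plan is to invoke the classification of possible tree classes of stable Auslander-Reiten components of $kG$ and then eliminate every case except $A_\infty$ under the hypothesis that the block of $M$ is wild.

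First, I would apply Webb's structure theorem, which asserts that for a finite group $G$ the tree class $\Delta_\Theta$ of any component $\Theta$ of the stable AR quiver $\Gamma_G^s$ must be one of the following: a finite Dynkin diagram ($A_n$, $D_n$, $E_6$, $E_7$, $E_8$), a Euclidean diagram ($\widetilde A_n$, $\widetilde D_n$, $\widetilde E_{6,7,8}$), or one of the infinite trees $A_\infty$, $A_\infty^\infty$, $D_\infty$, $B_\infty$, $C_\infty$. This is the starting point that restricts the combinatorial possibilities.

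Next I would eliminate the finite Dynkin cases: a component with such a tree class is finite, and by the Auslander--Reiten theoretic characterization of representation-finite algebras a block admitting a finite AR component is representation-finite, contradicting wildness. Then I would eliminate the Euclidean cases: these tree classes force the block to be of tame representation type (their occurrence is tied to the existence of additive $\tau$-periodic functions on the component whose behavior is incompatible with wild growth of indecomposables); again wildness excludes them.

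The main obstacle, and the genuine content of Erdmann's theorem, is ruling out the remaining non-$A_\infty$ infinite trees $A_\infty^\infty$, $D_\infty$, $B_\infty$, $C_\infty$. The strategy is to exploit the existence of a subadditive (but non-additive) function on such a component, given by $k$-dimension, together with the fact that in each of these excluded trees one can produce either $\tau$-periodic modules of controlled form or branching patterns that force the block to contain only finitely many non-periodic components or to be tame. These constraints are shown to contradict wildness by analyzing the interaction with the symmetry of $kG$ and the behavior of the Heller translate $\tau = \Omega^2$. Carrying out this last step in full is the technical heart of \cite{Er}, and for our purposes I would simply invoke that result rather than reproduce the delicate case analysis.
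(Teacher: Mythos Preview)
The paper does not prove this theorem at all: it is stated with the citation \cite{Er} and no proof is given, since it is quoted purely as background input for the applications in \S\ref{AR}. Your proposal therefore goes further than the paper does, giving a reasonable outline of Erdmann's argument (Webb's restriction of possible tree classes, elimination of finite Dynkin and Euclidean types via representation-finiteness and tameness, and then the case analysis ruling out $A_\infty^\infty$, $D_\infty$, $B_\infty$, $C_\infty$). Since you yourself ultimately invoke \cite{Er} for that final step, your proposal and the paper agree in substance: both defer the actual proof to Erdmann.
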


The tree class $A_{\infty}$ tells us a lot about the structure of the almost
split sequences.  The vertices of a stable Auslander-Reiten component 
of such a tree class 
are  obtained by  iterated applications of the translation operator $\tau$  
to a distinguished collections of vertices    which correspond to 
isomorphism classes of non-projective indecomposable  modules 
$X_0, X_1, X_2, \dots$ such that $X_0$ is
the beginning vertex and for each $i \geq 0$ there is an irreducible map  
$X_i \to X_{i+1}$. That is, we have a tree 
$$\xymatrix{
\Delta:  \quad X_0  \ar[r]&  X_1 \ar[r] & \ldots  \ar[r] 
& X_i \ar[r] & X_{i+1} 
\ar[r] & \ldots }
$$
such that the component is isomorphic to $\Z[\vec{\Delta}]/H$ 
as in (\ref{tree}). 
An almost split sequence involving 
modules in the component has the form
$$
\xymatrix{ 
0 \ar[r] & \tau^{n+1}X_i \ar[r] & \tau^{n+1}X_{i+1} \oplus 
\tau^{n} X_{i-1} \ar[r] & \tau^{n} X_i \ar[r] & 0 
}
$$
for some $n$ as long as $i \geq 1$. Here, $\tau^0$ is the identity
operator, $\tau^{-1} = Tr \circ D$, {\it etc}. On the 
boundary of the component 
($i = 0$),  the almost split sequence has the form
$$
\xymatrix{
0 \ar[r] & \tau^{n+1} X_0 \ar[r] & \tau^{n+1} X_{1} \oplus P 
\ar[r] & \tau^{n} X_0 \ar[r] & 0 
}
$$
where $P$ might be zero or might be an indecomposable projective 
module (see \cite[VII.4]{ARS}). 
However, as stated in Lemma~\ref{proj-in-ass},  
each indecomposable projective module
occurs in a unique almost split sequence (which is not necessarily in the 
component under consideration).

Before showing how almost split sequences can be used to
generate indecomposable modules of constant Jordan type,
we need the following technical lemma.
We fix some notation which differs from
standard notation in the case of finite groups:  
In what follows, let $A \subset B$ be rings, and $M$ 
be a left $A$-module. Then
$$
\Coind_A^B M = B \otimes_A M
$$ 
   
Let $G$ be a finite group scheme, and
$\xymatrix@-.5pc{\alpha_K: K[t]/t^p \ar[r] & KG}$ 
be a $\pi$-point.   We denote by $K\langle \alpha_K(t) \rangle$ 
the subalgebra of $KG = KG_K$ generated by $\alpha_K(t)$.

\begin{lemma}
\label{periodic}
Let $G$ be a finite group scheme and let $\alpha_K: K[t]/t^p \to KG$ be
a $\pi$-point of $G$.  Let $N$ be a finite dimensional 
$K[t]/t^p$-module which is not projective 
and set $M = \Coind_{K\langle \alpha_K(t) \rangle}^{KG} N$.  Then
$$\Pi(G_K)_M  ~ = ~ \{ [\alpha_K] \} ~ \subset ~ \Pi(G_K).$$
\end{lemma}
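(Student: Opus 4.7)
The lemma has two parts: showing $[\alpha_K] \in \Pi(G_K)_M$ (equivalently, $\alpha_K^*(M)$ is non-projective), and showing that any $\pi$-point $\beta_L$ with $[\beta_L] \neq [\alpha_K]$ has $\beta_L^*(M_L)$ projective. The plan is to reduce both parts to the case where $G$ is unipotent abelian---by passing through the subgroup scheme $C_K \subset G_K$ through which $\alpha_K$ factors---and then perform a direct dimension count.

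For the first part, let $C = K\langle \alpha_K(t)\rangle$ and set $M' := KC_K \otimes_C N$. Transitivity of coinduction gives $M = KG \otimes_{KC_K} M'$; since $KC_K \hookrightarrow KG$ is a sub-Hopf-algebra inclusion with $KG$ free over $KC_K$, the standard Mackey decomposition exhibits $M'$ as a direct summand of $\Res_{KC_K}^{KG}(M)$, so $\alpha_K^*(M')$ is a direct summand of $\alpha_K^*(M)$. In the commutative algebra $KC_K$ the centrality of $\alpha_K(t)$ gives $\alpha_K(t) M' = KC_K \otimes_C \alpha_K(t) N$, and combining this with the freeness of $KC_K$ over $C$ (coming from flatness of $\alpha_K$) yields
\[
\Dim(M'/\alpha_K(t) M') = (\Dim KC_K / p) \cdot \Dim(N/\alpha_K(t) N).
\]
For $\alpha_K^*(M')$ to be projective this must equal $\Dim M'/p$, forcing $\Dim(N/\alpha_K(t) N) = \Dim N/p$---which occurs only when $N$ is projective, contradicting the hypothesis. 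Hence $\alpha_K^*(M)$ is non-projective.

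For the second part, the same decomposition $M = \Coind_{KC_K}^{KG}(M')$ combined with the classical support-variety formula for modules coinduced from sub-Hopf-algebra inclusions (asserting that $\Pi(G_K)_M$ is the image of $\Pi((C_K)_K)_{M'}$ under the closed map $\Pi((C_K)_K) \hookrightarrow \Pi(G_K)$) reduces the task to showing $\Pi((C_K)_K)_{M'} = \{[\alpha_K]\}$ in the unipotent abelian case. In this commutative setting, a parallel computation yields $M'_L/\beta_L(t) M'_L \cong (LC_K / \beta_L(t) LC_K) \otimes_{LC} N_L$, and the dimension of this quotient equals $\Dim M'_L/p$ (giving projectivity of $\beta_L^*(M'_L)$) precisely when $\alpha_K(t)$ acts freely on $LC_K / \beta_L(t) LC_K$ as an $LC$-module.

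The main obstacle is this final equivalence: translating the non-equivalence $[\beta_L] \neq [\alpha_K]$ in $\Pi((C_K)_K)$ into the algebraic freeness statement. After extending scalars and using the standard presentation $LC_K \cong L[T_1, \ldots, T_r]/(T_i^{p^{e_i}})$ from \cite[14.4]{W}, each $\pi$-point is determined up to equivalence by its image modulo the square of the augmentation ideal, so $[\beta_L] \neq [\alpha_K]$ translates into a linear-independence condition on the linear parts of $\alpha_K(t)$ and $\beta_L(t)$. The required freeness then follows by an explicit change of coordinates modeled on the prototypical case $\alpha_K(t) = t_1$, $\beta_L(t) = t_2$ with $t_i := T_i^{p^{e_i-1}}$, where $LC_K/t_2 LC_K \cong (L[T_1]/T_1^{p^{e_1}}) \otimes_L (L[T_2]/T_2^{p^{e_2-1}}) \otimes_L \bigotimes_{i \geq 3}(L[T_i]/T_i^{p^{e_i}})$ is visibly free over the subalgebra $L[t_1]/t_1^p = LC$.
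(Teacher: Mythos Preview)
Your overall strategy---reduce to a unipotent abelian subgroup scheme $C_K$ via the support formula for coinduced modules, then carry out an explicit calculation---matches the paper's. The final freeness computation in the prototypical case $\alpha_K(t)=t_1$, $\beta_L(t)=t_2$ is correct and is essentially the same verification the paper performs (the paper reduces first to $N=K[t]/t^i$ and writes down an explicit monomial basis of $M_L$ over $L\langle t_2\rangle$, whereas you phrase it as a dimension count; both are fine).

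There are, however, two genuine gaps.

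\textbf{(a) The Mackey step.} Your claim that ``the standard Mackey decomposition exhibits $M'$ as a direct summand of $\Res_{KC_K}^{KG}(M)$'' is standard for finite groups but is not a routine fact for arbitrary closed subgroup schemes $C_K\subset G_K$: one needs $KC_K$ to be an $(KC_K,KC_K)$-bimodule direct summand of $KG$, and this requires justification beyond freeness. The paper avoids this entirely. It does \emph{not} show $[\alpha_K]\in\Pi(G_K)_M$ directly; instead it uses Eckmann--Shapiro,
\[
\Ext^{*>0}_{KG}(M,K)\;\cong\;\Ext^{*>0}_{K\langle\alpha_K(t)\rangle}(N,K)\;\neq\;0,
\]
to conclude that $M$ is not projective, hence $\Pi(G_K)_M\neq\emptyset$. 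The containment $\Pi(G_K)_M\subset\{[\alpha_K]\}$ then comes from the second half of the argument, so $[\alpha_K]$ lands in the support by elimination. This is both shorter and avoids the bimodule issue.

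\textbf{(b) The characterisation of equivalence of $\pi$-points.} You assert that ``each $\pi$-point is determined up to equivalence by its image modulo the square of the augmentation ideal.'' This is correct only when every $e_i=1$, i.e.\ in the elementary abelian case. In the general presentation $LC_K\cong L[T_1,\dots,T_r]/(T_i^{p^{e_i}})$ the equivalence class of a $\pi$-point is governed by the coefficients of the $t_i:=T_i^{p^{e_i-1}}$, and these elements lie in $I^{p^{e_i-1}}$, hence vanish modulo $I^2$ whenever $e_i>1$. Thus ``image mod $I^2$'' cannot distinguish $t\mapsto t_1$ from $t\mapsto t_2$ in that case. The paper handles this correctly: it argues that $\beta_L(t)$ must contain a nonzero scalar multiple of some $t_i$, chooses a representative of $[\beta_L]$ that is \emph{linear in the $t_i$'s}, and then changes generators so that $\alpha_K(t)=t_1+p(T)$ (with $p(T)$ in the ideal generated by the $t_i$'s but with no linear $t_i$-term) and $\beta_L(t)=t_2$. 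Your change-of-coordinates step should be phrased in terms of the $t_i$'s, not in terms of $I/I^2$.

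Once these two points are repaired (or replaced by the paper's Eckmann--Shapiro argument and the correct description of $\pi$-point equivalence), your argument goes through and is essentially equivalent to the paper's.
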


\begin{proof}
If $\Pi(G_K)_M$ were empty, then $M$ would be 
projective, and, hence, injective.   
On the other hand, the Eckmann-Shapiro Lemma (\cite[I.2.8.4]{B})
would enable us to then conclude that 
$\Ext_{G_K}^{*>0}(M,K) = \Ext_{K\langle \alpha_K(t) \rangle}^{*>0}(N,K) = 0$ 
which would contradict our assumption that $N$ is not projective.  Thus,
$\Pi(G_K)_M $ is non-empty.   

Let $U_K \subset G_K$ be a unipotent abelian subgroup scheme through
which $\alpha_K$ factors.  The proof of \cite[4.12]{FPS} which is stated
for induction rather than coinduction implies that 
$$\Pi(G_K)_M~  \subset ~
\rm{im}\{ \Pi(U_K) \to \Pi(G_K)\}$$
so that we may assume that $G_K = U_K$ is a unipotent abelian finite
group scheme over $K$.

After possibly replacing $K$ by some purely inseparable extension which
does not change the space $\Pi(G_K)_M$, we may by  \cite[14.4]{W} assume 
that $KG_K$ is isomorphic (as an algebra) to 
$K[T_1, \dots, T_n]/(T_1^{p^{e_1}},\ldots,T_n^{p^{e_n}})$
for suitable choice of $n, e_1,\ldots,e_n$.   
Let $t_i = T_i^{p^{e_i-1}}$, and recall that any 
$\pi$-point $\beta_L: L[t]/t^p \to 
LG_K$ must send $t$ to a sum of monomials in 
$T_1,\ldots, T_n$ at least one of which is a 
non-linear scalar multiple of some $t_i$ and each of which are divisible by 
some (possibly varying) $t_i$.   By a change of generators, we may arrange that
$\alpha_K(t) = t_1 + p(T)$ where each monomial of the polynomial $p(T)$ is
a non-scalar multiple of some $t_i$.   

In order to verify that $[\beta_L] \notin \Pi(G_K)_M$
for $[\beta_L] \not= [\alpha_K]$, we may choose a representative $\beta_L$ of
$[\beta_L]$ which is linear in the $t_i$'s.   
Assuming $[\beta_L] \not= [\alpha_K]$,
we may change generators once again so that $\alpha_K(t)$ retains the form 
$t_1 + p(T)$ as above and $\beta_L(t) = t_2$.  
The condition that $[\beta_L] \notin \Pi(G_K)_M$ is equivalent to the
condition that $\beta_L^*(M_L)$ is free.  Clearly, it suffices to assume
that $N$ is indecomposable of the form $K[t]/t^i, i < p$.
Then $M_L ~ \cong ~ L[T_1, \dots, T_n]/(t_1^p, \dots, t_n^p, (t_1+p(T))^i)$
which is free over $L{\langle t_2 \rangle}$ with a monomial basis 
$\{T_1^{j_1}, T_2^{j_2}, \dots, T_n^{j_n}\}$, where 
$ 0 \leq j_1 < ip^{e_1-1}, 0\leq j_2 < p^{e_2-1}, 0\leq j_3 < p^{e_3} 
\dots 0\leq j_n < p^{e_n}$.
\end{proof}

We now prove a local restriction result for almost split exact 
sequences of $kG$-modules.

\begin{prop} 
\label{restr}
Let $G$ be a finite group scheme  such 
that the dimension of $\Pi(G)$ is at least $1$, 
and let $M$ be an indecomposable non-projective
$kG$-module of constant Jordan type.  Assume 
that one of the following conditions hold: 
either $M$ is absolutely indecomposable or $k$ is perfect. 
Consider the almost split sequence of $kG$-modules 
$$
\xymatrix{
\CE: 0 \ar[r] & N \ar[r] & B \ar[r] &  M \ar[r] & 0.
}
$$ 
Then for any $\pi$-point  $\xymatrix@-.6pc{\alpha_K: K[t]/t^p \ar[r] & KG}$,
$\alpha_K^*(\CE_K)$
is a split short exact sequence of $K[t]/t^p$-modules.
\end{prop}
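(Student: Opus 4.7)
The plan is to combine the adjunction between coinduction and restriction with the defining property of almost split sequences and the support computation of Lemma~\ref{periodic}.

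First I would reduce to the case where the almost split sequence lives in $KG_K$-modules with an indecomposable non-projective right-hand term. If $M$ is absolutely indecomposable, then $M_K$ remains indecomposable and $\mathcal{E}_K$ is itself almost split over $KG_K$; if instead $k$ is perfect, the standard (but non-trivial) behavior of almost split sequences under base change decomposes $\mathcal{E}_K$ as a direct sum of almost split $KG_K$-sequences, one for each indecomposable summand of $M_K$. By Theorem~\ref{summand} every summand of $M_K$ has constant Jordan type, so each non-projective summand has non-projective Jordan type at every $\pi$-point and thus has full support $\Pi(G_K)$. It therefore suffices to prove that $\alpha_K^*(\mathcal{E}')$ splits for each resulting almost split $KG_K$-sequence $\mathcal{E}':\ 0\to N'\to B'\to M'\to 0$, where $M'$ is indecomposable, non-projective, and of constant Jordan type.

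Set $X = \Coind_{K\langle \alpha_K(t)\rangle}^{KG_K} \alpha_K^*(M')$. Since $\alpha_K$ is flat, the tensor-hom adjunction is a natural isomorphism on $\Hom$ and $\Ext^1$. Under it the identity of $\alpha_K^*(M')$ corresponds to a canonical $KG_K$-map $f: X \to M'$, and the class of $\alpha_K^*(\mathcal{E}')$ in $\Ext^1_{K[t]/t^p}(\alpha_K^*(M'),\alpha_K^*(N'))$ corresponds to the pullback class $f^*[\mathcal{E}'] \in \Ext^1_{KG_K}(X, N')$. Consequently $\alpha_K^*(\mathcal{E}')$ splits if and only if $f$ lifts through the surjection $B'\to M'$.

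The crux is to show $f$ is not a split epimorphism. If it were, $M'$ would be a direct summand of $X$; but $\alpha_K^*(M')$ is non-projective (constant non-projective Jordan type), so Lemma~\ref{periodic} yields $\Pi(G_K)_X = \{[\alpha_K]\}$, whence $\Pi(G_K)_{M'} \subseteq \{[\alpha_K]\}$. On the other hand, the hypothesis that $\dim \Pi(G)\ge 1$ combined with the constancy of the non-projective Jordan type of $M'$ gives $\Pi(G_K)_{M'} = \Pi(G_K)$ of dimension at least $1$, a contradiction. Thus $f$ is not a split epi, and the almost split property of $\mathcal{E}'$ produces a lift $g: X\to B'$ with $(B'\to M')\circ g = f$. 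A final application of the adjunction turns $g$ into a $K[t]/t^p$-map $\tilde g: \alpha_K^*(M') \to \alpha_K^*(B')$ splitting $\alpha_K^*(B'\to M')$, completing the argument.

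The main obstacle is the initial reduction step in the $k$ perfect case: one must know that $\mathcal{E}_K$ decomposes into almost split $KG_K$-sequences matching the indecomposable summands of $M_K$. The rest of the argument, by contrast, is a fairly formal combination of the coinduction construction of Lemma~\ref{periodic} with Eckmann--Shapiro adjunction and the universal property of an almost split sequence.
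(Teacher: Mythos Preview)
Your argument is correct and follows essentially the same route as the paper: reduce (via Kasjan's result on almost split sequences under base change in the perfect case) to an almost split $KG_K$-sequence ending in an indecomposable $M'$, form the coinduced module $X=\Coind_{K\langle\alpha_K(t)\rangle}^{KG_K}\alpha_K^*(M')$, use Eckmann--Shapiro to translate the splitting question into whether the canonical map $X\to M'$ lifts, and rule out $M'$ being a summand of $X$ by comparing supports via Lemma~\ref{periodic}. The only cosmetic difference is that you phrase the key step directly (show $f$ is not a split epimorphism, then lift), whereas the paper argues by contradiction (assume non-split, deduce $M'$ is a summand of $X$, contradict the support computation).
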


\begin{proof}
If $M_K$ is decomposable, write  $M_K \cong \oplus M_K^i$ 
as a direct sum of indecomposable 
$KG_K$-modules. Since  in this case $k$ is perfect, 
Theorem \cite[3.8]{Kas} implies that the almost split 
sequence $\CE_K$ is a direct sum of
almost split sequences $$
\xymatrix{
\CE_K^i: 0 \ar[r] & N_K^i  \ar[r] & B_K^i \ar[r] &  M_K^i \ar[r] & 0.
}
$$ 
Thus, it suffices to prove that 
$\alpha_K^*(\CE_K^i)$ is split for each $i$.   Hence, we may assume 
that $M_K$ is neither projective nor decomposable, and that
$$
\xymatrix{
\CE_K: 0 \ar[r] & N_K  \ar[r] & B_K \ar[r] &  M_K \ar[r] & 0
}
$$ 
is an almost split sequence of $KG_K$-modules. 

Let $\widetilde M_K=\Coind_{K\langle 
\alpha_K(t) \rangle}^{KG}(\alpha_K^*(M_K))$.
We have a commutative diagram
\begin{equation}
\label{dia}
\begin{xy}*!C\xybox{%
\xymatrix{ \Hom_{KG_K}(\widetilde M_K, B_K)  \ar[d]^{\cong}  \ar[r]  & 
\Hom_{KG_K}(\widetilde M_K, M_K)
\ar[d]^{\cong}\\
\Hom_{K\langle \alpha_K(t) \rangle}(\alpha_K^*(M_K), \alpha_K^*(B_K))  \ar[r]&
\Hom_{K\langle \alpha_K(t) \rangle}(\alpha_K^*(M_K),\alpha_K^*(M_K))   }}
\end{xy}
\end{equation}
where the vertical arrows are isomorphisms by the 
Eckmann-Shapiro Lemma.
If $\alpha_K^*(\CE_K)$ were not split, then the lower horizontal arrow 
of (\ref{dia})  and thus also the upper horizontal arrow of (\ref{dia})
would not be surjective.  On the other hand, the defining property of
almost split sequences  would then imply
that $\widetilde M_K$  must have $M_K$ as a direct summand.  If so, then
$\Pi(G_K)_{M_K} \subset \Pi(G_K)_{\widetilde M_K}$. Since $M_K$ is a 
non-projective module of constant Jordan type, 
we have $\Pi(G_K)_{M_K} = \Pi(G_K)$, and, hence, the support of 
$M_K$ has dimension at least $1$. 
Since  $\Pi(G_K)_{\widetilde M_K}$ consists of 
only $1$ point by Lemma \ref{periodic}, 
we obtain a contradiction.
\end{proof}

\begin{lemma}  
\label{tau}
Let $G$ be a finite group scheme, and $M$ be an indecomposable non-projective 
finite-dimensional $kG$-module.   Then $M$  is a module of 
constant Jordan type if and only if 
$\tau M$ is a module of constant Jordan type. 
Moreover, if $M$ is a module of constant Jordan type 
then the stable Jordan types of $M$ and $\tau M$ are the same. 
\end{lemma}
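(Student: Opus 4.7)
The main idea is to reduce the assertion to the behavior of $\Omega^2$ and of tensoring with a one-dimensional module. Since $kG$ is a finite-dimensional Hopf algebra and therefore Frobenius, the Auslander--Reiten translate admits a description of the form $\tau M \cong \Omega^2(M \otimes \delta)$ in $\stmod(kG)$, where $\delta$ is a one-dimensional $kG$-module arising from the Nakayama automorphism of $kG$ (the modular character). The first step is to record this description; in the symmetric case (for example when $G$ is a finite group) one has $\delta \cong k$ and $\tau \cong \Omega^2$, so no twist enters.

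Next I would analyze the effect of tensoring with $\delta$ after restriction along a $\pi$-point. Given $\alpha_K: K[t]/t^p \to KG$, choose a unipotent abelian subgroup scheme $C_K \subset G_K$ through which $\alpha_K$ factors. Since $KC_K \hookrightarrow KG_K$ is a map of Hopf algebras, restriction to $KC_K$ commutes with tensor product, so $(M_K \otimes \delta_K)|_{C_K} \cong M_K|_{C_K} \otimes \delta_K|_{C_K}$. A one-dimensional representation of a unipotent group scheme is necessarily trivial, hence $\delta_K|_{C_K} \cong K$, and therefore
$$\alpha_K^*(M_K \otimes \delta_K) \cong \alpha_K^*(M_K)$$
as $K[t]/t^p$-modules.

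The remainder is elementary. Flatness of $\alpha_K$ implies that $\alpha_K^*$ commutes with the Heller shift $\Omega$ up to projective summands. On $K[t]/t^p$-modules, $\Omega[i] \cong [p-i]$ stably for $1 \le i < p$, so $\Omega^2$ preserves stable Jordan type. Combined with the previous paragraph, this yields that $\alpha_K^*((\tau M)_K)$ and $\alpha_K^*(M_K)$ have the same stable Jordan type for every $\pi$-point $\alpha_K$. If $M$ has constant Jordan type, then the stable Jordan type of $\alpha_K^*((\tau M)_K)$ is independent of $\alpha_K$; since $\Dim(\tau M)$ is a fixed integer, the number of full $[p]$-blocks is then also independent of $\alpha_K$, so $\tau M$ has constant Jordan type with the same stable type as $M$. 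The converse direction follows symmetrically from $\tau^{-1}N \cong \Omega^{-2}(N \otimes \delta^{-1})$.

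The chief obstacle is extracting the Nakayama/modular-character description of $\tau$ at the generality of an arbitrary finite group scheme, rather than just the symmetric case $kG$ for a finite group where $\tau = \Omega^2$. Once this description is in place, the argument is a routine combination of the unipotency of targets of $\pi$-points (which kills one-dimensional twists) with the stable $2$-periodicity of the Heller shift on $K[t]/t^p$.
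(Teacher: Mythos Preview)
Your argument is correct and follows the same outline as the paper's proof: identify $\tau$ up to stable isomorphism as $\Omega^2$ composed with tensoring by a one-dimensional module coming from the Nakayama automorphism, then check that each of these two operations preserves (stable) Jordan type at every $\pi$-point, and run the same reasoning for $\tau^{-1}$.

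The one genuine difference is in how you handle the twist by $\delta$. The paper invokes Corollary~\ref{tensor} (which rests on the rather substantial Theorem~\ref{max}) to conclude that $M\otimes k_\delta^\#$ has constant Jordan type, and then reads off that the type is unchanged from the tensor formula $[i]\otimes[1]=[i]$. Your route is more elementary and slightly stronger: by factoring $\alpha_K$ through a unipotent abelian $C_K\subset G_K$ and using that a one-dimensional representation of a unipotent group scheme is trivial, you get $\alpha_K^*(M_K\otimes\delta_K)\cong\alpha_K^*(M_K)$ at \emph{every} $\pi$-point, with no hypothesis on $M$. This sidesteps the tensor-product machinery entirely. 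Both approaches then finish identically via Proposition~\ref{heller} for $\Omega^2$. The paper, for its part, supplies the derivation you flag as the ``chief obstacle'': it shows explicitly (via \cite[I.8.12]{J}) that the Nakayama functor on $kG$-modules is $M\mapsto M\otimes k_\delta^\#$ for a character $\delta$ of $G$, so your description of $\tau$ is justified.
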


\begin{proof}
Since $kG$ is self-injective, the translation 
operator $\tau$ is isomorphic 
to $\mathcal N \circ \Omega^2$ where $\mathcal N$ 
is the Nakayama functor  defined as 
$$\mathcal N = \Hom_{kG}(-,kG)^\#$$ 
(see \cite[IV.3]{ARS}).

The Heller shift $\Omega^2$ preserves the property of 
being of constant Jordan type, and, 
moreover, preserves the stable Jordan type  of a module 
of constant Jordan type (\ref{heller}). 
Hence, to show that $\tau$ preserves modules of constant 
Jordan type, we need to demonstrate this for 
the Nakayama functor. 

By \cite[I.8.12]{J} we can exhibit a character $\delta$ of $G$ with
the property that  
$kG^\# \otimes k_\delta \simeq kG$ 
is a two-sided $kG$-isomorphism, where $k_\delta$ 
is the one-dimensional $kG$-representation with 
$kG$ acting trivially on the right and by the 
character $\delta$ on the left. Hence, we get the 
following isomorphisms of $kG$-modules
$$\Hom_{kG}(M,kG) \simeq \Hom_{kG}(M, kG^\# \otimes k_\delta) \simeq 
\Hom_{kG}(M \otimes k_\delta^\#, kG^\#) \simeq $$
$$\Hom_{kG}(M \otimes k_\delta^\#, \Hom_k(kG,k)) \simeq 
\Hom_k(M \otimes k_\delta^\#, k) = M^\# \otimes k_\delta$$
Hence,
$$\mathcal N(M) = \Hom_{kG}(M,kG)^\# \simeq 
(M^\# \otimes k_\delta)^\# \simeq M \otimes k_\delta^\#$$
as $kG$-modules.  
Let $M$ be a module of constant Jordan type. 
Since the module $k_\delta$ is 1-dimensional, 
it is of trivial constant Jordan type.
Hence, the module $\mathcal N(M) \simeq M \otimes k_\delta^\#$ 
is also of constant 
Jordan type by Corollary \ref{tensor}.  Moreover, the 
Jordan type of $M \otimes k_\delta^\#$ 
is the same as that of $M$.

To show that $\tau M$ being of constant Jordan
type implies that $M$ is of constant Jordan type, 
we merely observe that the operator $\tau$  has an inverse 
given by $\Omega^{-2} \circ \mathcal N^{-1}$. 
   Hence, if $\tau M$ is of
constant Jordan type then so is $M$. 
\end{proof}

The following theorem asserts that whether or not an indecomposable 
$kG$-module $M$ has constant Jordan type is a function of the
connected components of the stable Auslander-Reiten quiver of $kG$.

\begin{thm}
\label{component}
Let $G$ be a finite group scheme, and let $M$ be an 
indecomposable non-projective module of constant Jordan type.
Let $\Theta$ be a component of the stable Auslander-Reiten 
quiver of $kG$ containing  the module $M$.  Assume 
that one of the following 
conditions hold: either all vertices of $\Theta$   
are absolutely indecomposable, or $k$ is perfect. 
Then for any $[N] \in \Theta$, the module  $N$ 
has constant Jordan type.

\end{thm}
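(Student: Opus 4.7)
The plan is to propagate the constant-Jordan-type property along arrows of $\Theta$ and then invoke connectedness of the component. The engine is the following local step: if $X \in \Theta$ is indecomposable, non-projective, and of constant Jordan type, then $\tau X$, $\tau^{-1}X$, and every non-projective indecomposable summand appearing in the middle term of the almost split sequence ending at $X$ (or starting at $X$) are again of constant Jordan type.

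The assertion for $\tau^{\pm 1}X$ is exactly Lemma \ref{tau}. For the middle term, consider the almost split sequence
\[
\CE \colon \ 0 \to \tau X \to B \to X \to 0.
\]
Under either hypothesis of the theorem, $X$ satisfies the hypothesis of Proposition \ref{restr}, so $\alpha_K^*(\CE_K)$ is split for every $\pi$-point $\alpha_K$; equivalently, the class of $\CE$ in $\Ext^1_{kG}(X,\tau X)$ restricts to zero at every $\pi$-point. Since both $X$ and $\tau X$ are of constant Jordan type, Proposition \ref{extend1} then yields that $B$ is of constant Jordan type, and Theorem \ref{summand} transfers this to each indecomposable summand of $B$. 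A projective summand of $B$ (which by Lemma \ref{proj-in-ass} can appear only in the special AR sequence of the form (\ref{ass-kg})) is automatically of constant Jordan type by Proposition \ref{heller}, so it does no harm; the non-projective indecomposable summands are, by Lemma \ref{irred}, precisely the sources of arrows into $[X]$ in $\Theta$. Running the same argument on the almost split sequence $0 \to X \to B' \to \tau^{-1}X \to 0$ handles the targets of the arrows leaving $[X]$, completing the local step.

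Given an arbitrary $[N] \in \Theta$, the component $\Theta$ is by definition a connected component of $\Gamma^s_G$, so there is a finite path in the underlying undirected graph joining $[M]$ to $[N]$. Iterating the local step along the edges of this path shows that every vertex visited---and in particular $[N]$---is of constant Jordan type.

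The main obstacle is Proposition \ref{restr}: this is the crucial input which forces the extension class of an almost split sequence $\CE$ at a module of constant Jordan type to vanish at every $\pi$-point, and it is exactly where the hypothesis of absolute indecomposability (or perfectness of $k$, so that $M_K$ splits into absolutely indecomposable summands and Kasjan's decomposition theorem applies to $\CE_K$) is used. Once Proposition \ref{restr} is invoked, the remainder of the argument is a bookkeeping induction combining Proposition \ref{extend1}, Theorem \ref{summand}, Lemma \ref{tau}, and the description of AR-arrows from Lemma \ref{irred}.
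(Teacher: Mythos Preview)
Your argument is essentially the same as the paper's and is correct in outline, but you have omitted one small case-check: Proposition \ref{restr} carries the hypothesis $\dim \Pi(G) \geq 1$, which you never verify. The paper disposes of the complementary case $\dim \Pi(G)=0$ at the outset by noting (via Theorem \ref{connected}) that $\Pi(G)$ is then a single point, so every module is trivially of constant Jordan type; you should insert this sentence before invoking Proposition \ref{restr}. Apart from that, your local step and propagation along the component match the paper's proof (the paper handles successors by passing to $\tau N$ as a predecessor of $M$ rather than by the almost split sequence starting at $X$, but your variant is equally valid since $\tau^{-1}X$ inherits constant Jordan type from Lemma \ref{tau} and lies in $\Theta$, hence satisfies the hypothesis of Proposition \ref{restr}).
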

\begin{proof}
We first consider the case when $\Dim \Pi(G) = 0$. By 
Theorem \ref{connected}, $\Pi(G)$ consists of one point.  Hence, 
any module is tautologically a module of constant Jordan type. 

We may therefore assume that $\Dim \Pi(G) \geq 1$.  
Let $[N]$ be any predecessor 
of $[M]$ in the stable quiver component $\Theta$.  
Then there exists an almost split sequence
$$
\xymatrix{
\CE: 0 \ar[r] & \tau M \ar[r] & B \ar[r] &  M \ar[r] & 0
}
$$ 
 such that $N$ is a direct summand of $B$.    
By Lemma \ref{tau}, $\tau M$ is a module of constant Jordan type.
Let $\alpha_K: K[t]/t^p \to KG_K$ be a $\pi$-point.   
By Proposition \ref{restr}, $\alpha_K^*(\CE_K)$ splits.  
Thus, $\alpha_K^*(B_K) = \alpha^*(\tau M_K) \oplus \alpha^*(M_K)$.  
We conclude that $B$ is a module of constant Jordan type, so that 
Theorem \ref{summand} implies that $N$ has constant Jordan type. 

Now let $[N]$ be any successor  of $[M]$, i.e., 
there is an arrow $\xymatrix{[M] \ar[r] & [N]}$.    
By \cite[V.1.12]{ARS} and \cite[V.5.3]{ARS}, 
there is an arrow $\xymatrix{[\tau N] \ar[r] & [M]}$.  
Applying the argument above to  $\tau N$ and $M$, 
we conclude that $\tau N$ has constant Jordan type.   By Lemma~\ref{tau}, 
$N$ has constant Jordan type.

Since $\Theta$ is connected, the argument is finished by induction.    
\end{proof}

To prove the following ``realization of constant types" 
result, we appeal to the work of 
K. Erdmann \cite{Er} in the case of finite groups and 
that of R. Farnsteiner \cite{F1}, \cite{F2} 
for arbitrary finite
group schemes (see \ref{erd-farn}).   Namely, a result of \cite{EH} 
(see also \cite{F2}) following earlier work of 
Webb \cite{Webb} asserts that if $kG$ has wild representation type then the
Auslander-Reiten component of the trivial module has a very restricted
form.  Results of Erdmann and Farnsteiner 
assert that under hypotheses specified in the 
theorem below, the Auslander-Reiten
component of the trivial module must have type  $A_{\infty}$.

\begin{thm} 
\label{stableone}  
Let $G$ be a finite group scheme (over $k$ algebraically closed)
 satisfying one of the following
conditions: either   $G$ is a finite group which has $p$-rank at 
least 2 and whose Sylow $p$-subgroup is not dihedral or semi-dihedral
or $\Pi(G)$ has dimension at least 2.  Then 
for any $n$ there exists an indecomposable 
module of stable constant Jordan type $n[1]$.
\end{thm}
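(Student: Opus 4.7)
The plan is to apply the Auslander--Reiten theoretic results of \S\ref{AR} to the stable Auslander--Reiten component $\Theta$ containing the trivial module $k$. Under either of the two hypotheses on $G$, the block of $kG$ containing $k$ has wild representation type, so that Theorem \ref{erd-farn} (in the finite group case) together with its analogue due to Farnsteiner \cite{F1}, \cite{F2} (for finite group schemes with $\Dim \Pi(G) \geq 2$) applies to show that $\Theta$ has tree class $A_\infty$ and that $k$ occupies a boundary vertex. Label the vertices along the boundary ray as $X_0 = k, X_1, X_2, \ldots$ so that every vertex of $\Theta$ has the form $\tau^m X_i$ for some $m \in \bZ$ and $i \geq 0$.

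Since $k$ has constant Jordan type of stable type $1[1]$, Theorem \ref{component} implies that every $X_i$ has constant Jordan type. Moreover, under either hypothesis $\Dim \Pi(G) \geq 1$, so Proposition \ref{restr} applies: the restriction of every almost split sequence in $\Theta$ to every $\pi$-point $\alpha_K$ is split as a short exact sequence of $K[t]/t^p$-modules.

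The almost split sequence at the boundary vertex $X_0$ has the form
$$
0 \longrightarrow \tau X_0 \longrightarrow X_1 \oplus P \longrightarrow X_0 \longrightarrow 0,
$$
with $P$ projective (possibly zero), while for $i \geq 1$ it has the form
$$
0 \longrightarrow \tau X_i \longrightarrow \tau X_{i-1} \oplus X_{i+1} \longrightarrow X_i \longrightarrow 0.
$$
Let $s_i$ be the integer such that the stable Jordan type of $X_i$ is $s_i[1]$ (that the stable type involves only blocks of size one will follow from the induction below). Restricting each almost split sequence to an arbitrary $\pi$-point, combining the splitting afforded by Proposition \ref{restr} with the invariance of stable Jordan type under $\tau$ furnished by Lemma \ref{tau}, one reads off $s_1 = 2 s_0$ from the boundary sequence and the recursion $s_{i+1} = 2 s_i - s_{i-1}$ for $i \geq 1$. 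Since $s_0 = 1$, induction yields $s_i = i + 1$, so that $X_{n-1}$ is an indecomposable module of stable constant Jordan type $n[1]$ for each $n \geq 1$.

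The principal obstacle is the initial step: verifying that $\Theta$ is of type $A_\infty$ with $k$ on the boundary. This input is drawn from the cited deep work of Erdmann \cite{Er} and Farnsteiner \cite{F1}, \cite{F2} on Auslander--Reiten components of wild blocks of finite group schemes; once it is in hand, the remainder of the argument is a mechanical application of Proposition \ref{restr} and Lemma \ref{tau}.
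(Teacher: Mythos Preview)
Your argument is essentially the same as the paper's, with one ordering difference worth noting. You take as input from \cite{Er}, \cite{F1}, \cite{F2} both that $\Theta$ has tree class $A_\infty$ \emph{and} that $k$ sits at the boundary vertex. Theorem \ref{erd-farn} as stated in the paper, however, asserts only the tree class; the boundary position of $k$ is not part of that statement. The paper instead first runs your recursion starting from an unspecified bottom module $V_0$, obtaining that the stable Jordan type of $V_n$ is $(n{+}1)\,\ul a_0$ for every $n$, and \emph{then} observes that since $k$ lies somewhere in the component and has stable type $1[1]$, which is not a nontrivial multiple of anything, one is forced to have $k = V_0$ and $\ul a_0 = 1[1]$. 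This makes the argument self-contained given only the tree class. Your version is fine provided you can point to a precise statement in the cited references placing $k$ at the end; otherwise it is cleaner to reverse the order as the paper does. (A minor point: the middle term of the almost split sequence ending in $X_i$ for $i\ge 1$ is $\tau X_{i+1}\oplus X_{i-1}$ rather than $\tau X_{i-1}\oplus X_{i+1}$, but since Lemma \ref{tau} gives $\tau$-invariance of stable Jordan type this does not affect your recursion.)
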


\begin{proof}
By \cite[2]{Er} in the case of finite groups and 
\cite[3.3]{F2} for arbitrary finite
group schemes, our  assumptions imply that Auslander-Reiten quiver
of the  component containing  the 
trivial module must have type $A_{\infty}$. 

Let $V_n$ be an indecomposable module representing the 
$n^{\rm th}$ vertex of the tree underlying the component  
containing the trivial module.  
The bottom vertex has label 
$0$. By Proposition \ref{component}, 
$V_n$ has  constant  Jordan type 
for every $n$. Let ${\ul a}_n$  be the stable Jordan type of $V_n$.  
Proposition \ref{restr} and Lemma \ref{tau}  imply that the middle term $B_n$
of the  almost split sequence 
$$
\xymatrix{
0 \ar[r] & \tau V_n \ar[r] & B_n \ar[r] &  V_n \ar[r] & 0
}
$$ 
has stable constant Jordan type  $2{\ul a}_n$.

Since the tree class is $A_\infty$,  $V_1$ must be 
the only non-projective indecomposable 
summand of the middle term of the almost split sequence for $V_0$:
$$
\xymatrix{
0 \ar[r] & \tau V_0 \ar[r] & B_0 \ar[r] &  V_0 \ar[r] & 0
}
$$ 
Hence, ${\ul a}_1 = 2{\ul a}_0$.
The middle term of the almost split sequence for $V_1$
$$
\xymatrix{
0 \ar[r] &  \tau V_1 \ar[r] & B_1 \ar[r] &  V_1 \ar[r] & 0
}
$$ 
has two indecomposable non-projective summands, one of 
which is isomorphic to $\tau V_0$. Hence,
the other summand has stable constant Jordan type 
$2{\ul a}_1 - {\ul a}_0 = 3{\ul a}_0$.
Proceeding by induction, we see that the module 
which represents the $n^{th}$ node in this $A_{\infty}$ tree
has stable constant Jordan type ${\ul a}_n = n {\ul a}_0$.

We immediately conclude that $k$ must be at the 
bottom node since the stable Jordan type of $k$ is $1[1]$.   
Hence, ${\ul a}_0 = 1[1]$.   Therefore,  $V_n$ is 
an indecomposable module of stable constant Jordan type $n[1]$. 
 \end{proof}


\vspace{0.2in}

\section{Questions and Conjectures}
\label{qc}

	We offer a few broad questions as well as specific conjectures
which provide challenges for further investigation.

\begin{ques}
\label{q1}
For a given finite group scheme $G$, what Jordan types are realized as the
Jordan type of finite dimensional $kG$-modules with constant Jordan
type?
\end{ques}

Certainly, there are constraints as the following examples illustrate.

\begin{ex}
Let $G$ be a quasi-elementary abelian group scheme, $G = \Gas \times E$
with $E$ and elementary abelian $p$-group of rank $r$ and $p > 2$.  Then any 
$kG$-module $M$ of constant Jordan type of stable type $1[1]$  is endotrivial, 
and hence of the form $\Omega^i(k)$ \cite{Da}.  The Jordan type of such a 
module has the form $1[1] + mp^{r+s-1}[p]$ for some $m \geq 0$ (see Lemma
\ref{dim-omega}).  
\end{ex} 

\begin{ex}
We verify that there does not exist a finite dimensional $kE$-module of 
constant Jordan type $[2] + [p]$ for $p > 3$ for $E$ an elementary
abelian $p$-group of rank 2, which the reader can see as
very limited evidence for Conjecture \ref{c2}.
Suppose $V$ is such a $kE$-module and write
 $kE = k[x,y]/(x^p,y^p)$.  Consider the $k$-vector space basis 
$u, xu, v, xv, x^2v, \ldots, x^{p-1}v$ for $V$.

	We will show that some linear combination $y - bx$ 
satisfies $(y-bx)^{p-1}V = 0$, so that the Jordan form associated
to $y-bx$  has no block of size $p$.  Observe that $y(v)$ written 
in our given basis
has coefficient 0 for $v$ because $y$ is nilpotent and only $v$
in our basis satisfies $x^{p-1}v \not= 0$.  Second, suppose that 
$y(v)$ has coefficient $b$ for $xv$ and consider $y-bx$.  Then 
once again $(y-bx)(v)$ has coefficient 0 for $v$ and by construction 
coefficient 0 for $xv$.  Let us replace $y$ by $y-bx$, so that
$y(v) \in \text{span}\{ u,xu,x^2v, \ldots, x^{p-1}v \}$. 

	If we apply $y$ to the given basis, the only basis element
with the property that $y$ applied to it can have non-zero coefficient
for $u$ is $u$ itself, since any of the other basis elements would 
have to be annihilated by $x^2$ and thus in the image of $x$.  Since
$y$ is nilpotent, we conclude that 
$y(u) \in \text{span}\{ xu,x^2v, \ldots, x^{p-1}v \}$ 
so that
$y^2(u) \in \text{span}\{ x^4v, \ldots, x^{p-1}v \}$.

	Thus, $y^3u$ and $y^3v$ are both contained in 
$x^4V$, and we conclude that $y^{p-1}(x^iv) = 0 = y^{p-1}(x^iu)$.
\end{ex}

The challenge of Question \ref{q1} seems more interesting 
if we work stably, so that
we identify two Jordan types $\ul a = a_p[p] + \cdots + a_1[1], \
\ul b = b_p[p] + \cdots  + b_1[1]$  provided that $a_i = b_i, ~ i\not= p$.

\begin{ques}
For which finite group schemes $G$ is every stable Jordan type the
Jordan type of a finite dimensional $kG$-module of constant
Jordan type?
\end{ques}

The following is a specific conjecture would be a step towards
answering the previous question.  

\begin{conj}
\label{c2}
Let $E$ be an elementary abelian $p$-group of rank $\geq 2$, with $p > 3$.
Then there does not exist a finite dimensional $kE$-module of stable constant
Jordan type $[2]$.  
\end{conj}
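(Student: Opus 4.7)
The plan is to first reduce the conjecture to the rank-two case by flat restriction and then attempt to derive a contradiction from the structure of vector bundles on $\bP^1$ naturally attached to the hypothetical $M$.

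If $E$ has rank $r \geq 2$ and $E_0 \subset E$ is any elementary abelian subgroup of rank $2$, the inclusion $kE_0 \hookrightarrow kE$ is flat, so the functoriality of constant Jordan type under flat maps of finite group schemes (Proposition in \S\ref{ct}) shows that the restriction of $M$ to $E_0$ again has constant Jordan type $m[p] + 1[2]$. It therefore suffices to assume $E$ has rank $2$, so that $kE = k[x,y]/(x^p, y^p)$ and $\Pi(E) \cong \bP^1_k$. Supposing for contradiction such an $M$ exists with $\dim_k M = mp + 2$, I would consider the universal operator $\theta = t_0 x + t_1 y$ on $M \otimes k[t_0, t_1]$; because the Jordan type of $\theta$ on $M$ is constantly $m[p] + 1[2]$ at every $(t_0:t_1) \in \bP^1_k$, each $\operatorname{Im}(\theta^j)$ sheafifies to a vector sub-bundle $\mathcal{I}_j$ of the trivial bundle $M \otimes \mathcal{O}_{\bP^1}$ of the expected rank $m(p-j)+(2-j)_+$.

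Let $\mathcal{F}_j := \mathcal{I}_j / \mathcal{I}_{j+1}$ for $0 \leq j \leq p-1$; these are vector bundles on $\bP^1$ of ranks $m+1,\, m+1,\, m,\, m,\, \dots,\, m$. Multiplication by $\theta$ induces fibrewise-surjective bundle maps $\mathcal{F}_j(-1) \to \mathcal{F}_{j+1}$ which, by counting ranks, are isomorphisms except when $j=1$, where one obtains a short exact sequence
\[
0 \;\longrightarrow\; \mathcal{L} \;\longrightarrow\; \mathcal{F}_1(-1) \;\longrightarrow\; \mathcal{F}_2 \;\longrightarrow\; 0,
\]
in which $\mathcal{L}$ is a line bundle recording the position of the $[2]$-block as the $\pi$-point varies. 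These identifications yield $\mathcal{F}_1 \cong \mathcal{F}_0(-1)$ and $\mathcal{F}_j \cong \mathcal{F}_2(-(j-2))$ for $2 \leq j \leq p-1$, and combining them with $\sum_j \deg \mathcal{F}_j = 0$ (since $M \otimes \mathcal{O}$ is trivial) gives, after elementary bookkeeping, the identity $p \deg \mathcal{F}_0 = (m+1)(2p-3) + (p-2)\deg \mathcal{L} + m(p-2)(p-3)/2$. Reducing modulo $p$ yields $2 \deg \mathcal{L} \equiv -3 \pmod p$; note this already rules out $p = 2$ but is solvable for every odd prime.

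The remaining, and hard, step is to contradict the coexistence of the above congruence with the positivity constraints imposed by the bundle-map structure: the inclusion $\mathcal{L} \hookrightarrow \mathcal{F}_0(-2)$ is a sub-bundle embedding, so in any Grothendieck splitting $\mathcal{F}_0 = \bigoplus_{i=1}^{m+1} \mathcal{O}(a_i)$ its $m+1$ defining sections must have no common zero on $\bP^1$. The main obstacle — and the reason the conjecture is only stated here, not proved — is that the hypothesis $p > 3$ must enter precisely at this point, presumably through the existence of at least three interior layers $\mathcal{F}_2, \mathcal{F}_3, \mathcal{F}_4$ of rank $m$ whose mutually compatible degree sequences conflict with the congruence above. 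A genuine resolution will likely require a further cohomological input in the spirit of the Macaulay-type rank bound of Theorem~\ref{ranks}; a direct generalisation of the ad hoc basis argument used for $[p]+[2]$ in the preceding example seems unlikely to succeed for $m \geq 2$, since the $y$-action on the $[p]$-blocks is too unconstrained for any single linear combination $y - bx$ to annihilate $(y-bx)^{p-1}$ on all $[p]$-generators simultaneously.
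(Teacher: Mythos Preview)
The statement you are addressing is labelled a \emph{conjecture} in the paper, and the paper offers no proof of it. What the paper does provide is (i) a single ad hoc verification, in the example immediately preceding the conjecture, that no module of constant Jordan type $1[p]+1[2]$ exists over a rank-$2$ elementary abelian $p$-group with $p>3$, and (ii) the observation that an affirmative answer to Suslin's Question~\ref{ques2} would settle the conjecture. There is therefore nothing in the paper against which to compare your argument as a proof.

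Your proposal is not a proof either, and you say so yourself: after setting up the bundle filtration $\mathcal{F}_j$ on $\bP^1$ and extracting the congruence $2\deg\mathcal{L}\equiv -3\pmod p$, you correctly note that this congruence is solvable for every odd $p$ and that the role of $p>3$ has not yet been isolated. The paragraph beginning ``The remaining, and hard, step\ldots'' is an honest admission that the argument stops at precisely the point where the real content lies. So as a proof attempt there is a genuine gap: you have repackaged the hypothesis into bundle language but have not produced any obstruction.

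That said, your framework is a sensible and now well-known one --- associating to a module of constant Jordan type the vector bundles $\mathcal{F}_j$ on $\Pi(E)$ is exactly the construction developed subsequently by Friedlander--Pevtsova and Benson, and Benson's later work does use Chern-class and degree constraints on these bundles to rule out certain constant Jordan types. Your reduction to rank $2$ via flat restriction is correct, and the rank and twist bookkeeping for the $\mathcal{F}_j$ is set up properly. But be aware that even in that later literature the full Conjecture~\ref{c2} is not obtained by this degree-counting alone; the conjecture, to my knowledge, remains open in general. So your instinct that ``a further cohomological input'' is needed is accurate, and the paper's authors evidently shared it, which is why they recorded the statement as a conjecture rather than a theorem.
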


Andrei Suslin has formulated the following 
intriguing question whose affirmative
answer would in particular verify the preceding conjecture.

\begin{ques}
\label{ques2}
Let $E$ be an elementary abelian $p$-group of rank 2, with $p > 3$.
Let $M$ be a $kE$-module with constant Jordan type $\sum_i a_i[i]$
and let $i$ be an integer, $1 < i < p$.  Is it the case that if 
$a_i \not= 0 $, then either $a_{i+1} \not= 0$ or $a_{i-1} \not= 0$?
\end{ques}

Of course, if Conjecture \ref{c2} is valid, then it 
follows that there is no module
of stable constant Jordan type $[2]$ for any finite group scheme $G$
containing a quasi-elementary subgroup scheme $H = \bG_{a(r)} \times E$
such that the rank $s$ of $E$ plus $r$ is greater or equal to $2$.

We can make many other ``non-existence conjectures" such as the
following.  We recall (see Example \ref{elemex}) that for $E$  an elementary abelian $p$-group of rank 
$n \geq 2$ there exists a $kE$-module whose stable type is constant of 
type $1[2] + (n-1)[1]$

\begin{conj}
Let $E$ be an elementary abelian $p$-group of rank $n \geq 2$, with $p > 3$.
There does not exist a $kE$-module whose 
stable type is constant of type $1[2] + j[1]$ with $j \leq n-2$.
\end{conj}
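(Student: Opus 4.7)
The plan is to seek a contradiction from the vector-bundle geometry that a module of constant Jordan type produces on $\Pi(E)\cong\mathbb{P}(V)$, where $V=\mathrm{rad}(kE)/\mathrm{rad}^2(kE)$ has dimension $n$. Suppose $M$ is a $kE$-module of constant Jordan type $a_p[p]+1[2]+j[1]$ with $j\le n-2$, and (by Theorem~\ref{summand}) replace $M$ by the indecomposable summand containing the $[2]$-block. Set $\mathcal{M}=M\otimes\mathcal{O}_{\mathbb{P}(V)}$, let $\xi\colon \mathcal{M}\to\mathcal{M}(1)$ be multiplication by the tautological section, and form the kernel sheaves $K_i=\ker(\xi^i)$ and the untwisted image sheaves $I_i=\mathrm{Im}(\xi^i)(-i)\subset\mathcal{M}$. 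Constancy of Jordan type guarantees that all of $K_i$, $I_i$, and their intersections and quotients are locally free of the ranks dictated by the partition.

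Isolate the ``$[2]$-block'' line bundles $\mathcal{L}_{\mathrm{top}}=K_2/(K_1+K_2\cap I_1)$ and $\mathcal{L}_{\mathrm{soc}}=(K_1\cap I_1)/(K_1\cap I_2)$, and note that $\xi$ descends to an isomorphism $\mathcal{L}_{\mathrm{top}}\xrightarrow{\sim}\mathcal{L}_{\mathrm{soc}}(-1)$. Isolate also the rank-$j$ ``$[1]$-block'' bundle $\mathcal{T}=K_1/(K_1\cap I_1)$. Now exploit the abelianness of $E$: each $y\in V$ commutes with $\xi$ in $kE$, so multiplication by $y$ preserves every $K_i$ and $I_i$, hence all three of $\mathcal{L}_{\mathrm{top}}$, $\mathcal{L}_{\mathrm{soc}}$, and $\mathcal{T}$; moreover, over $[x]\in\mathbb{P}(V)$ the action of $x$ itself sends any lift $\tilde u$ of a generator of $\mathcal{L}_{\mathrm{top}}$ into $K_1+K_2\cap I_1$. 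The goal is to package the remaining $V$-directions into a bundle morphism
\begin{equation*}
\Phi\colon Q\otimes \mathcal{L}_{\mathrm{top}}\longrightarrow \mathcal{T},
\end{equation*}
with $Q$ the rank-$(n-1)$ tautological quotient on $\mathbb{P}(V)$, and then to prove that $\Phi$ is fibrewise injective. Fibrewise injectivity would force $n-1=\mathrm{rk}\,Q\le\mathrm{rk}\,\mathcal{T}=j$, contradicting $j\le n-2$.

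The main obstacle is precisely this fibrewise injectivity, and it is where the hypothesis $p>3$ must enter, to prevent Jordan blocks of intermediate sizes $3,\dots,p-1$ from mediating between the $[2]$-block and the $[1]$-blocks and thereby absorbing the would-be $[1]$-direction of $y\cdot\tilde u$ into a larger target bundle. Concretely, one must show that for $y\in V$ not a scalar multiple of $x$, the element $y\cdot\tilde u$ contributes a genuinely new $[1]$-direction in $\mathcal{T}|_{[x]}$ rather than being absorbed into $K_1\cap I_1$ or lifting to a $[p]$-block. The rank-$2$ case with $a_p=0$ can be handled by a direct $2\times 2$ matrix argument (commuting $A,B$ with $A^2=AB=B^2=0$ forces $B\in kA$, producing a $\pi$-point of Jordan type $2[1]$ rather than $1[2]$), but the presence of $a_p[p]$-summands or higher rank introduces genuine complications: the target of $\Phi$ may naively be only a larger subquotient than $\mathcal{T}$, and one has to justify the reduction to $\mathcal{T}$ itself. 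I would first attempt a Koszul-type or Chern-class obstruction relating $\mathcal{T}$, $\mathcal{L}_{\mathrm{top}}$ and $Q$ on $\mathbb{P}^{n-1}$, and then use restriction to rank-$2$ subgroups together with Corollary~\ref{tensor} and Proposition~\ref{heller} to lift the local analysis to the global one. Establishing this uniform dispersion of the single $[2]$-block into $n-1$ independent $[1]$-directions across $\mathbb{P}(V)$ is, I believe, the genuine content and is probably why the statement remains a conjecture.
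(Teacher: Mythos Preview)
The statement you are addressing is presented in the paper as a \emph{conjecture}, not a theorem; the paper offers no proof and explicitly places it in the section of open questions. You clearly recognise this yourself in your final sentence, so in a sense there is nothing to compare your proposal against.

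That said, a few remarks on your outline. The vector-bundle formalism you invoke (kernel and image sheaves of the tautological operator $\xi$ on $\mathbb{P}(V)$) is not developed in this paper; it was introduced in later work of Friedlander--Pevtsova and Benson, so within the internal logic of the present paper your construction of $\mathcal{L}_{\mathrm{top}}$, $\mathcal{L}_{\mathrm{soc}}$, $\mathcal{T}$ and the map $\Phi\colon Q\otimes\mathcal{L}_{\mathrm{top}}\to\mathcal{T}$ would need independent justification. More substantively, the step you flag as the obstacle --- fibrewise injectivity of $\Phi$ --- is indeed the entire difficulty, and your sketch does not supply a mechanism for it. The presence of $a_p[p]$ summands is not a side issue: over a fibre $[x]$, the element $y\cdot\tilde u$ can genuinely land in $K_1\cap I_1$ (the socle of a $[p]$-block) rather than in $\mathcal{T}$, and nothing in the commutativity of $E$ or the hypothesis $p>3$ obviously prevents this. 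Your proposed reduction to rank-$2$ subgroups via restriction does not help here, since restriction to a rank-$2$ subgroup only gives the conjecture for $j\le 0$, which is already the content of Conjecture~\ref{c2} and is itself open.

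In short: your proposal is an honest and well-motivated programme, but it is not a proof, and you correctly diagnose why. The paper makes no further progress on this conjecture than you do.
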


We next formulate questions of a more qualitative nature.

\begin{defn}
Let $G$ be a finite group scheme over a field $k$ (of characteristic $p > 0$).
We denote by $\cL~  = ~ \bN^p$ the (additive) 
lattice of Jordan types over $k$.  
We denote by 
$$\R(G) ~ \subset ~ \cL$$
the sublattice of those Jordan types which can be realized as the 
Jordan types
of $kG$-modules of constant Jordan type.
\end{defn}

\begin{ques}
For which finite group schemes $G$ is $\cL/\R(G)$ finite?  
Among such finite group
schemes, how does the invariant $\cL/\R(G)$ behave?

For those finite group schemes $G$ for which $\cL/\R(G)$ is infinite, 
can we give
some interpretation of the rank of this quotient in more familiar terms?
\end{ques}

In view of our discussion involving Auslander-Reiten almost split 
sequences, it seems
of considerable interest to consider $\I(G)$ as defined below.

\begin{defn}
Let $G$ be a finite group scheme over a field $k$ (of characteristic $p > 0$) 
and 
let $\ol \R(G) ~ \subset ~ \ol \cL ~ = ~ \bN^{p-1}$ 
denote the subset of those stable Jordan
types realizable as the stable Jordan type of a finite 
dimensional $kG$-module
of constant Jordan type.  Further, let us 
denote by $\I(G) ~ \subset ~ \ol \R(G) ~ \subset ~ \ol \cL ~ = ~ \bN^{p-1}$ 
the subset of 
stable Jordan types 
which are the Jordan types of {\it indecomposable} $kG$-modules
of constant Jordan type.
\end{defn}

\begin{ques}
For which $G$ is $\I(G)$ closed under addition?
\end{ques}

\begin{remark}
If $G$ is the Klein four group, $G \simeq \Z/2\Z \times \Z/2\Z$, then the 
only non-projective indecomposable modules
of constant Jordan type are Heller shifts of the trivial module $k$.  Thus, 
for this choice of $G$, $\I(G)$ is not closed under addition.
\end{remark}

One is tempted to ask many questions concerning how the realizability
of modules of constant type behaves with respect to change of finite
group scheme.  We ask one such question.

\begin{ques}
For which $H \subset G$ does restriction induce a bijection from the 
set of Jordan types realized as Jordan types of $kG$-modules of 
constant Jordan type to the set of Jordan types realized as Jordan 
types of $kH$-modules of constant Jordan type?
\end{ques}


\section{APPENDIX: Decomposition of tensor products of $k[t]/t^p$-modules}

The purpose of this appendix is to derive a closed
form for tensor products of $k[t]/t^p$-modules, presumably implicit
in \cite{Srin}.  Here, we view
$k[t]/t^p$ as a self-dual Hopf algebra; in other words, as the 
restricted enveloping algebra of the 1-dimensional $p$-restricted
Lie algebra (with trivial $p$-restriction operator). 
Thus, the coproduct is given by the formula 
$t \mapsto 1 \otimes t + t \otimes 1$.   

\vspace{0.1in}
Let $V(\lambda)$ denote the simple restricted $sl_2$-module of highest weight 
$\lambda$ where $0 \leq \lambda \leq p-1$.
Let $e, h, f$ be the standard generators for $sl_2$. 
Let $\langle e \rangle \subset sl_2$ be the 1-dimensional $p$-restricted 
Lie algebra generated by the element $e$.  Hence,   
$u(\langle e \rangle) \simeq  k[t]/t^p$ is a Hopf subalgebra of $u(sl_2)$.

The $sl_2$-representation theory implies that
\begin{equation}
\label{restriction}
V(\lambda)\downarrow_{u(\langle e \rangle)} \simeq [\lambda + 1]
\end{equation} 
as $k[t]/t^p$-modules.  
Hence, the tensor product formulas for indecomposable $k[t]/t^p$ 
modules follow from the tensor product formulas for simple $sl_2$-modules.   
Such formulas were obtained  in \cite{Pre} (see also \cite{BO}).    

\begin{prop} (See \cite[Lemma 2.3, 2.4]{Pre}.) 
Let $\{V(\lambda)\}_{0\leq \lambda \leq p-1}$ be the collection of 
simple restricted $sl_2$-modules.

\vspace{0.1in}

\noindent  
$\bu$ If  $0 \leq \mu\leq \lambda \leq p-2$ and $\lambda + \mu \leq p-1$  then 
$$ 
V(\lambda) \otimes V(\mu) \simeq V(\lambda -\mu)\oplus 
V(\lambda - \mu + 2) \oplus \ldots \oplus V(\lambda+\mu)$$
$\bu$ If  $0 \leq \mu\leq \lambda \leq p-2$ and $\lambda + 
\mu \geq p$  then 
$$ 
V(\lambda) \otimes V(\mu) \simeq  V(\lambda -\mu)\oplus 
V(\lambda - \mu + 2)\oplus \ldots \oplus  V(2(p-2) - \lambda - \mu) + \proj
$$
where the projective summand  has dimension $\lambda + \mu - (p-2)$.

\end{prop}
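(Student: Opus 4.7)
The plan is to prove both decompositions by induction, using as main ingredients (i) the weight-space structure of $V(\lambda)$, which is characteristic-free, and (ii) an explicit decomposition of $V(\lambda) \otimes V(1)$ that serves as the inductive building block. Since each $V(\lambda)$ has a one-dimensional weight space for every weight in $\{-\lambda, -\lambda+2, \ldots, \lambda\}$, the formal character of $V(\lambda) \otimes V(\mu)$ coincides with the classical Clebsch-Gordan character $\sum_k V(k)$ summed over $k = |\lambda-\mu|, |\lambda-\mu|+2, \ldots, \lambda+\mu$. This pins down the composition factors counted with multiplicity; the issue is to decide which survive as direct summands of simples and which are absorbed into projective indecomposables.

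For the base case I would compute $V(\lambda) \otimes V(1)$ directly. For $\lambda \leq p-2$, exhibiting highest weight vectors of weights $\lambda+1$ and $\lambda-1$ in the tensor product and checking dimensions yields $V(\lambda) \otimes V(1) \simeq V(\lambda+1) \oplus V(\lambda-1)$, with the convention $V(-1) := 0$. For $\lambda = p-1$, the Steinberg module $V(p-1)$ is both simple and projective, so $V(p-1) \otimes V(1)$ is projective of dimension $2p$; inspection of its head identifies it with the projective cover $Q(p-2)$ of $V(p-2)$. This single computation is where projective summands first enter the picture.

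For the inductive step I would induct on $\mu$, using associativity to write $(V(\lambda) \otimes V(\mu-1)) \otimes V(1) \simeq V(\lambda) \otimes V(\mu) \,\oplus\, V(\lambda) \otimes V(\mu-2)$, applying the inductive hypothesis to the left-hand side and then the base case term by term. In the range $\lambda+\mu \leq p-1$ all intermediate simples $V(k)$ satisfy $k \leq p-1$, so no projective indecomposables are produced, and a dimension count verifies the claimed direct sum. In the range $\lambda+\mu \geq p$, each occurrence of $V(p-1) \otimes V(1)$ inside the recursion becomes $Q(p-2)$; these projective indecomposables accumulate to form the projective summand, while the remaining simples pair off under the reflection $k \mapsto 2(p-2)-k$ to produce the telescoped list $V(\lambda-\mu), V(\lambda-\mu+2), \ldots, V(2(p-2)-\lambda-\mu)$ of non-projective summands.

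The main obstacle will be tracking, in the second case, the absorption of ``high-weight'' composition factors into projective indecomposables uniformly across the induction, in particular verifying that the dimensions match: the projective summand works out to have dimension $p(\lambda+\mu-(p-2))$ by subtracting $(p-1-\lambda)(p-1-\mu)$, the dimension of the non-projective piece, from the total $(\lambda+1)(\mu+1)$. The cleanest way to control this bookkeeping is to invoke the block structure and linkage principle for $u(sl_2)$, which groups $V(k)$ and $V(p-2-k)$ into the same block, so that surviving simples at the ``low'' end of the Clebsch-Gordan range are forced to appear as direct summands rather than as extensions, and ``high''-end composition factors are forced to coalesce into projective indecomposables.
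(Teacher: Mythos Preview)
The paper does not give a proof of this proposition: it is quoted from Premet's paper \cite{Pre} (Lemmas 2.3 and 2.4 there), and then immediately used to derive the $k[t]/t^p$ tensor formulas in the subsequent corollary. So there is no argument in the paper to compare your proposal against.

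That said, your inductive strategy is the standard and correct route to these formulas. The key identity $V(\lambda)\otimes V(\mu)\;\oplus\;V(\lambda)\otimes V(\mu-2)\;\simeq\;\bigl(V(\lambda)\otimes V(\mu-1)\bigr)\otimes V(1)$, which follows from $V(\mu-1)\otimes V(1)\simeq V(\mu)\oplus V(\mu-2)$ and Krull--Schmidt, lets you peel off $V(\lambda)\otimes V(\mu)$ once the two smaller tensor products are known. In the range $\lambda+\mu\leq p-1$ this reproduces the classical Clebsch--Gordan decomposition verbatim; in the range $\lambda+\mu\geq p$ the Steinberg summand $V(p-1)$ appearing at the top of $V(\lambda)\otimes V(\mu-1)$ (when $\lambda+\mu-1=p-1$) is the mechanism by which projectives first enter, and your appeal to the block/linkage structure of $u(sl_2)$ is exactly what is needed to see that the ``low'' simples split off while the ``high'' ones are absorbed into projectives.

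One small remark: your computation of the dimension of the projective summand as $p\bigl(\lambda+\mu-(p-2)\bigr)=(\lambda+1)(\mu+1)-(p-1-\lambda)(p-1-\mu)$ is correct. The paper's phrasing ``the projective summand has dimension $\lambda+\mu-(p-2)$'' is imprecise as written; what is meant (and what is used in the corollary that follows) is that upon restriction to $k[t]/t^p$ the projective part contributes $\lambda+\mu-(p-2)$ copies of $[p]$, i.e.\ has $k$-dimension $p(\lambda+\mu-(p-2))$.
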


Restricting the tensor decompositions for the simple 
restricted $sl_2$-modules $V(\lambda)$ to the indecomposable 
$k[t]/t^p$-modules $[i]$ via the formula  (\ref{restriction}) 
we obtain the following formulas.

\begin{cor}
\label{tensor2}
Let $[i]$  be an indecomposable  $k[t]/t^p$-module of dimension 
$i$ for $1\leq i \leq p$. Then
\begin{equation*}
[i]\otimes[j] =  \left\{\begin{array}{ll}
{[j-i +1] + [j-i + 3] + \ldots + [j+i-3] + [j+i-1]} &
 \text{ if \,\,} j+i \leq p  \\
 &\\
 {[j -i +1] 
 + \ldots  + [2p-1-i-j]   + (j+i -p)  [p] } & \text{ if \,\,} j+i > p.
 \end{array} \right.
\end{equation*}
 \end{cor}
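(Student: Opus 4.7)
The plan is to derive Corollary \ref{tensor2} by restricting the $sl_2$-module tensor product decomposition from the preceding Proposition along the Hopf subalgebra embedding $u(\langle e\rangle)\simeq k[t]/t^p \hookrightarrow u(sl_2)$. Because this embedding is a map of Hopf algebras, restriction commutes with tensor products, so the restricted decomposition directly describes $[i]\otimes[j]$.

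First I would handle the case $j=p$ separately. Here $[p]$ is the regular representation of $k[t]/t^p$ and is therefore projective, so $[i]\otimes[p]$ is projective of dimension $ip$ and hence isomorphic to $i[p]$. In the corollary's second formula with $j=p$, the arithmetic progression $[j-i+1],[j-i+3],\ldots,[2p-1-i-j]$ is empty (its starting index $p-i+1$ exceeds its ending index $p-i-1$), and $(j+i-p)[p]=i[p]$ accounts for the whole module.

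For the main case $1\leq i\leq j\leq p-1$, by symmetry of $\otimes$ we set $\lambda=j-1$ and $\mu=i-1$ so that $0\leq\mu\leq\lambda\leq p-2$ and, by formula (\ref{restriction}), $V(\lambda)\downarrow=[j]$ and $V(\mu)\downarrow=[i]$. If $\lambda+\mu\leq p-1$ (equivalently $i+j\leq p+1$), Premet's first formula gives $V(\lambda)\otimes V(\mu)=\bigoplus_{\nu} V(\nu)$ over $\nu\in\{\lambda-\mu,\lambda-\mu+2,\ldots,\lambda+\mu\}$, which restricts to $[j-i+1]+[j-i+3]+\cdots+[j+i-1]$, matching the corollary's first case when $i+j\leq p$. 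The boundary $i+j=p+1$ is a harmless reorganization: the top term $V(\lambda+\mu)=V(p-1)$ restricts to $[p]$, which is exactly the single projective summand $1\cdot[p]$ in the corollary's second-case formula. If instead $\lambda+\mu\geq p$ (equivalently $i+j\geq p+2$), Premet gives $\bigoplus_{\nu=\lambda-\mu}^{2(p-2)-\lambda-\mu}V(\nu)\oplus P$ with $P$ a projective $u(sl_2)$-module. Projective $u(sl_2)$-modules restrict to projective $k[t]/t^p$-modules, i.e., direct sums of copies of $[p]$, and a dimension count pins down the multiplicity: the $V(\nu)$ summands restrict to $[j-i+1]+[j-i+3]+\cdots+[2p-1-i-j]$ of total dimension $(p-i)(p-j)$ (an arithmetic-progression sum), so $P$ contributes $ij-(p-i)(p-j)=p(i+j-p)$ to the dimension, giving exactly $(i+j-p)$ copies of $[p]$.

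The hardest part is only the bookkeeping around the boundary $i+j=p+1$ and the dimension count producing $p(i+j-p)$ in Premet's second case. Otherwise there is no real obstacle: everything reduces to an application of the known $sl_2$ tensor product decomposition together with the Hopf compatibility of the inclusion $u(\langle e\rangle)\hookrightarrow u(sl_2)$.
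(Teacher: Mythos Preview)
Your proposal is correct and follows exactly the approach indicated in the paper: restrict Premet's $sl_2$ tensor decomposition along the Hopf subalgebra inclusion $u(\langle e\rangle)\hookrightarrow u(sl_2)$ using $V(\lambda)\!\downarrow\,\simeq[\lambda+1]$. Your write-up is in fact more careful than the paper's one-line justification, since you explicitly handle the boundary case $j=p$ (not covered by the stated Proposition, which assumes $\lambda,\mu\le p-2$) and you perform the dimension count for the projective part rather than relying on the Proposition's somewhat ambiguous phrase about the ``dimension'' of the projective summand.
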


We conclude this appendix with the analogous formula 
for the tensor product of indecomposable $kC_p$-modules
where $C_p$ is a cyclic group of order $p$. A subtlety 
here is that even though the module categories for $kC_p$ 
and the algebra $k[t]/t^p$ with the coproduct 
$ t \mapsto 1 \otimes t + t \otimes 1$ are equivalent, 
the tensor product structure comes from two 
different coproducts. Nonetheless, the tensor multiplicities 
turn out to be the same. 
\begin{cor}
Let $[i]$, $1 \leq i \leq p$, be indecomposable $kC_p$-modules.   Then 
the decomposition of the tensor product 
$[i] \otimes [j]$  into indecomposable $kC_p$-modules 
is given by the formulas as in Corollary 
\ref{tensor2}.
\end{cor}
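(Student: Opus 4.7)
Plan. Identify $kC_p$ with $k[T]/(T^p)$ via $T = g-1$, where $g$ generates $C_p$. Under the group-like coproduct $\Delta(g) = g \otimes g$, the action of $g-1$ on $[i] \otimes [j]$ is the nilpotent operator
\[
B \;=\; (1+T_i)\otimes(1+T_j) - 1 \;=\; T_i \otimes 1 + 1 \otimes T_j + T_i \otimes T_j,
\]
whereas the primitive action producing the formulas of Corollary~\ref{tensor2} is $A = T_i \otimes 1 + 1 \otimes T_j$. Since both operators act on the same underlying vector space, it suffices to show that $A$ and $B$ have identical Jordan types, as the Jordan type of $g-1$ determines the decomposition of a $kC_p$-module into indecomposables.

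I would realize $A$ and $B$ as elements of the group algebra $k(C_p \times C_p) \cong k[t_1,t_2]/(t_1^p,t_2^p)$ acting on the external tensor product $[i] \boxtimes [j]$, with $t_1 = T \otimes 1$ and $t_2 = 1 \otimes T$; then $A = t_1 + t_2$ and $B = (1+t_1)(1+t_2) - 1 = t_1 + t_2 + t_1 t_2$. Both elements have $p$-th power zero---the first by the Freshman's Dream and the second because $1+B$ is a group element of $C_p \times C_p$ of order dividing $p$---and their difference $B - A = t_1 t_2$ lies in $I \cdot I^{(p)} + I^{(p)} \cdot I$, since for the elementary abelian group $C_p \times C_p$ the augmentation ideal $I = (t_1,t_2)$ coincides with the distinguished ideal $I^{(p)}$. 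The key additional ingredient is to verify that $A$ realizes the maximal Jordan type on $[i] \boxtimes [j]$. Applying Theorem~\ref{max}(3) with $M = [i] \boxtimes k$ and $N = k \boxtimes [j]$ (so that $M \otimes N = [i] \boxtimes [j]$), I would observe that $\Pi(C_p \times C_p) \cong \Proj k[\zeta_1,\zeta_2]$ is irreducible and that $A$ attains the maximal Jordan types $[i]$ on $M$ and $[j]$ on $N$ (any $\pi$-point of $C_p \times C_p$ with nonzero linear part in $t_1$ yields Jordan type $[i]$ on $M$, and $A$ satisfies this). Theorem~\ref{max}(3) then gives that $A$ has maximal Jordan type on $[i] \boxtimes [j]$, equal to the Jordan type of $A^*(M) \otimes A^*(N) = [i] \otimes [j]$ as $K[t]/t^p$-modules under the primitive coproduct, which is precisely the decomposition of Corollary~\ref{tensor2}.

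With the maximality of $A$ on $[i] \boxtimes [j]$ in hand, the comparison tool from \cite[1.12]{FPS}---applied exactly as in the proof of Proposition~\ref{dual}, where the ideal membership of the difference of two $p$-nilpotent elements forces equality of Jordan types on the ambient module---implies that $B$ induces the same Jordan type as $A$ on $[i] \boxtimes [j]$. Since this is also the Jordan type of the action of $g-1$ on $[i] \otimes [j]$ regarded as a $kC_p$-module via the group-like coproduct, the corollary follows. The main obstacle is the maximality verification for $A$; once that is in place, the ideal membership $B - A = t_1 t_2 \in I \cdot I^{(p)} + I^{(p)} \cdot I$ and the transplantation of the argument of Proposition~\ref{dual} complete the proof, and the ``subtlety'' between the two coproducts dissolves into the same formula already recorded in Corollary~\ref{tensor2}.
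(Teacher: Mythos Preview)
Your proof is correct. The paper's own proof is a single-line citation of \cite[4.5]{FPS}, which asserts directly that the two coproducts on $k[t]/t^p \cong kC_p$ produce isomorphic tensor products of modules. You instead reprove that fact in this special case using the machinery developed in the present paper: you realize both operators $A$ and $B$ as acting on the $k(C_p\times C_p)$-module $[i]\boxtimes[j]$, invoke Theorem~\ref{max}(3) to certify that the $\pi$-point $t\mapsto t_1+t_2$ is maximal there, and then apply \cite[1.12]{FPS} via the ideal membership $B-A=t_1t_2\in I\cdot I^{(p)}$ exactly as in the proof of Proposition~\ref{dual} to transfer the Jordan type from $A$ to $B$. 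This is essentially the content of \cite[4.5]{FPS} unpacked, so the two arguments are the same at heart; yours has the virtue of being self-contained within this paper's framework, at the cost of some length.
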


\begin{proof}
By \cite[4.5]{FPS} the tensor product of any two $kC_p$-modules 
$M$, $N$ is isomorphic as 
$k[t]/t^p$-module to the tensor product $M \otimes N$  
using the coproduct $t \mapsto 1 \otimes t + t \otimes 1$.
The statement now follows from Corollary \ref{tensor2}.
\end{proof}


\end{document}